\newtheorem{theorem}{Theorem}[section]
\newtheorem{prop}[theorem]{Proposition}
\newtheorem{lemma}[theorem]{Lemma}
\theoremstyle{definition}
\newtheorem{problem}{Problem}
\newtheorem{defn}[theorem]{Definition}
\newtheorem{fac}{Fact}
\newtheorem{conjecture}[theorem]{Conjecture}
\newtheorem{coro}[theorem]{Corollary}
\newtheorem{rmk}{Remark}
\newtheorem{claim}[theorem]{Claim}
\newcommand{\eps}{\varepsilon}
\title{$H$-factors in graphs with small independence number}
\author{
	Ming Chen\thanks{School of Mathematics, China University of Mining and Technology, Xuzhou, China. Email: {\tt chenming314@cumt.edu.cn}.}
	\and
	Jie Han\thanks{School of Mathematics and Statistics and Center for Applied Mathematics, Beijing Institute of Technology, Beijing, China. Email: {\tt hanjie@bit.edu.cn}.}
	\and
	Guanghui Wang\thanks{School of Mathematics and Data Science Institute, Shandong University, Jinan, China. Email: {\tt ghwang@sdu.edu.cn}. Supported by Young Taishan Scholar program of Shandong Province (201909001) and Natural Science Foundation of China (11871311).}
	\and
	Donglei Yang\thanks{Data Science Institute, Shandong University, Jinan, China. Email: {\tt dlyang@sdu.edu.cn}. Supported by the China Postdoctoral Science Foundation (2021T140413), Natural Science Foundation of China (12101365) and Natural Science Foundation of Shandong Province (ZR2021QA029).}
}
\begin{document}
\maketitle

\begin{abstract}
Let $H$ be an $h$-vertex graph. The vertex arboricity $ar(H)$ of $H$ is the least integer $r$ such that $V(H)$ can be partitioned into $r$ parts and each part induces a forest in $H$. We show that for sufficiently large $n\in h\mathbb{N}$, every $n$-vertex graph $G$ with $\delta(G)\geq \left(\max\left\{1-\frac{2}{f(H)},\frac{1}{2}\right\}+o(1)\right)n$ and $\alpha(G)=o(n)$ contains an $H$-factor, where $f(H)=2ar(H)$ or $2ar(H)-1$. The result can be viewed an analogue of the Alon--Yuster theorem \cite{MR1376050} in Ramsey--Tur\'{a}n theory, which generalises the results of Balogh--Molla--Sharifzadeh~\cite{MR3570984} and Knierm--Su~\cite{MR4193066} on clique factors. In particular the degree conditions are asymptotically sharp for infinitely many graphs $H$ which are not cliques.
\end{abstract}

\section{Introduction}
Let $H$ be an $h$-vertex graph and $G$ be an $n$-vertex graph. An $H$-\emph{tiling} in $G$ is a collection of vertex-disjoint subgraphs of $G$ isomorphic to $H$. An $H$-\emph{factor} is an $H$-tiling which covers all the vertices of $G$. Determining sufficient conditions for the existence of an $H$-factor is one of the fundamental lines of research in extremal graph theory. One important reason is due to a result of Hell and Kirkpatrick \cite{1983Hell} which shows that the decision problem for $H$-factors is \emph{NP}-complete, given that $H$ has a connected component of size at least 3.

The first result in this line is due to Dirac \cite{Dirac1952}, who proved that every $n$-vertex graph $G$ with $\delta(G)\geq\frac{n}{2}$ contains a Hamiltonian cycle, in particular if $n$ is even then $G$ has a perfect matching. The celebrated Hajnal--Szemer\'{e}di theorem \cite{HajnalSze1970} states that for all integers $n, r$ with $r\geq 2$ and $r|n$, any $n$-vertex graph $G$ with $\delta(G)\geq \left(1-\frac{1}{r}\right)n$ contains a $K_{r}$-factor. The $K_{3}$-factor case was previously proved by Corr\'{a}di and Hajnal \cite{MR200185}. The balanced complete $r$-partite graph witnesses the tightness of the minimum degree condition.
For non-cliques $H$, Alon and Yuster \cite{MR1376050} proved that for any constant $\eps>0$ and large $n\in\mathbb{N}$ divisible by $|H|$, $\delta(G)\geq \left(1-\frac{1}{\chi(H)}+\eps\right)n$ guarantees an $H$-factor, namely, the relevant parameter in the degree condition is $\chi(H)$, the \emph{chromatic number of $H$}.
However, as given in \cite{cooley2007perfect,Kawa}, there are graphs $H$ for which the term $1-1/\chi(H)$ in the minimum degree condition can be improved significantly and determining the best possible bound on $\delta(G)$ for an arbitrary graph $H$ has been an intriguing problem.
There had been many excellent results (see e.g. \cite{MR1767021, MR1829855, MR1995690} and survey \cite{MR2588541}) until it was finally settled by K\"{u}hn and Osthus \cite{DKDO2009} in 2009. There are also several significant generalisations of the Hajnal--Szemer\'{e}di theorem in the setting of partite graphs \cite{MR3354296, MR1910115, MR2433861}, directed graphs \cite{MR3406450} and hypergraphs \cite{MR2500161}.

\subsection{Main result}

Note that the extremal example that achieves the optimality of the bound on $\delta(G)$ in the Hajnal--Szemer\'{e}di theorem contains a large independent set, which is ``regular'' and rather rare among all graphs. Following the spirit of the well-known Ramsey--Tur\'{a}n theory (see \cite{MR0299512,MR1476449, MR716422, MR2500161}), a natural question on the Hajnal--Szemer\'{e}di theorem is to determine the minimum degree condition forcing a clique factor when the host graph has sublinear independence number. The following Ramsey--Tur\'{a}n type problem was proposed by Balogh, Molla and Sharifzadeh \cite{MR3570984}.

\begin{problem}[\cite{MR3570984}]\label{pro1.1}
Let $r\geq 3$ be an integer and $G$ be an $n$-vertex graph with $\alpha(G)=o(n)$. What is the minimum degree condition on $G$ that guarantees a $K_{r}$-factor?
\end{problem}

Balogh, Molla and Sharifzadeh \cite{MR3570984} studied the $K_{3}$-factors and showed that the minimum degree condition $\delta(G)\ge\frac{n}{2}+\mu n$ guarantees a triangle factor, for any $\mu>0$ and large $n\in 3\mathbb N$.

Later, Nenadov and Pehova \cite{MR4080942} generalised Problem \ref{pro1.1} to $K_{\ell}$-independence numbers for $\ell\geq 2$: is it true that for every $r, \ell\in \mathbb{N}$ with $r\geq \ell\geq 2$ and $\mu>0$ there exist a constant $\alpha$ and $n_{0}\in \mathbb{N}$ such that every graph $G$ on $n\geq n_{0}$ vertices where $r$ divides $n$ with $\delta(G)\geq \max\left\{\frac{1}{2}+\mu, \frac{r-\ell}{r}+\mu\right\}n$ and $\alpha_{\ell}(G)\leq \alpha n$ has a $K_{r}$-factor? Nenadov and Pehova \cite{MR4080942} also proved a general upper bound for the minimum degree conditions, which is asymptotically optimal when $\ell=r-1$. Chang, Han, Kim, Wang and Yang \cite{CHKWY} provided a negative answer to this problem. They \cite{CHKWY} also determined an asymptotically optimal degree condition for $\ell\geq \frac{3}{4}r$.

Recently, Knierim and Su \cite{MR4193066} proved the following result, which solves Problem \ref{pro1.1} asymptotically.
\begin{theorem}[\cite{MR4193066}]\label{thm1.31}
Given constant $\mu >0$, $r\in \mathbb{N}$ with $r\geq 4$, there exists $\alpha >0$ such that the following holds for sufficiently large $n$. Let $G$ be an $n$-vertex graph with $r$ dividing $n$, $\delta(G)\geq \left(1-\frac{2}{r}+\mu\right)n$ and $\alpha(G)\leq \alpha n$. Then $G$ contains a $K_{r}$-factor.
\end{theorem}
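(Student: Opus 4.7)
The plan is to apply the absorption method, splitting the proof into an \emph{absorbing step} and an \emph{almost-perfect tiling step}. First, I would build a small set $A\subseteq V(G)$ with $|A|=o(n)$ such that for every $R\subseteq V(G)\setminus A$ with $|R|\le\gamma n$ and $r\mid |A\cup R|$, the induced subgraph $G[A\cup R]$ admits a $K_r$-factor. Then I would show that $G-A$ contains a $K_r$-tiling covering all but $\gamma n$ vertices, and use the absorbing property of $A$ to mop up the leftover.

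For the absorbing step, the standard route is an absorber lemma guaranteeing that every vertex $v\in V(G)$ is contained in polynomially many absorbers --- bounded-size configurations $S_v$ containing $v$ that can be $K_r$-tiled in two different ways, one ``using'' $v$ and one ``not using'' $v$, so that $v$ can be swapped in without destroying the tiling. A standard probabilistic selection then produces $A$. Constructing absorbers uses both hypotheses: the minimum-degree condition ensures large common neighbourhoods, while the sublinear independence number combined with a Ramsey--Tur\'an counting argument guarantees many $K_{r-1}$'s (and hence many $K_r$'s through $v$) inside these neighbourhoods. A typical absorber has the form of a pair of vertex-disjoint $K_r$'s linked via an exchangeable vertex that can be replaced by $v$.

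For the almost-perfect tiling step, I would apply Szemer\'edi's regularity lemma to $G-A$ and pass to the reduced graph $\mathcal R$, which inherits $\delta(\mathcal R)\ge\bigl(1-\tfrac{2}{r}+\tfrac{\mu}{2}\bigr)|\mathcal R|$. Crucially the sublinear independence number of $G$ forces most regular pairs to have density bounded away from~$0$: otherwise one could iteratively pick non-neighbours across clusters to extract a large independent set. Combining with a Tur\'an-type argument on $\mathcal R$ should produce a fractional $K_r$-tiling of $\mathcal R$ covering almost all weight, which lifts via the standard regularity blow-up to a $K_r$-tiling of $G-A$ missing only $o(n)$ vertices. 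Applying the absorbing property of $A$ to the leftover then completes the $K_r$-factor.

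The most delicate part is the absorber construction. At the threshold $1-2/r$, the extremal example is (essentially) the join of two pseudorandom $K_{\lceil r/2\rceil+1}$-free graphs with sublinear independence number, and in such graphs no single neighbourhood is rich in $K_{r-1}$'s. Absorbers must therefore exploit the interaction between the two ``halves''; ensuring that every vertex enjoys polynomially many absorbers of a sufficiently robust form so that the random selection argument produces a genuinely flexible $A$ is where the argument must work hardest. This is precisely why $r\ge 4$ is needed: for $r=3$ the two halves collapse into a single one, and a genuinely different, triangle-specific technique (Balogh--Molla--Sharifzadeh) is required.
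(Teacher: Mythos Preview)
The paper does not itself prove this theorem; it is cited as a result of Knierim and Su. However, the paper's main Theorem~\ref{thm1.3} specialises to it (since $f(K_r)=r$), and the paper's proof of that generalisation is what your proposal should be compared against.

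Your overall architecture (absorbing set plus almost-perfect tiling via regularity) matches the paper's, but two substantive ingredients are missing.

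First, your absorbing step relies on ``polynomially many absorbers'' per vertex and a random selection. As the paper explicitly notes in Subsection~1.2, this fails here: when absorbers are built using the independence-number hypothesis, the counting is too weak to give $\Omega(n^{ht})$ of them. The remedy (from \cite{MR4080942}) is to show only that every $h$-set has $\Omega(n)$ \emph{vertex-disjoint} absorbers, and then invoke the Nenadov--Pehova construction (Lemma~\ref{lem5.1}). Even that requires further work: one first partitions $V(G)=B\cup U$ with $|B|=o(n)$ so that the required reachability holds inside $U$, using the lattice-based absorption machinery (Lemmas~\ref{lem7.1}--\ref{lem5.10}); only then can Lemma~\ref{lem5.2} supply the vertex-disjoint absorbers.

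Second, in the tiling step, the claim that sublinear independence number forces most regular pairs to have positive density is not correct; many pairs may be empty. What the hypothesis actually buys is edges (hence small cliques) \emph{inside} any linear-sized set, in particular inside single clusters. Accordingly, the paper works with a reduced \emph{multigraph} in which pairs of density above $\tfrac12$ receive a double edge, and finds a fractional tiling not by copies of $K_r$ in the reduced graph but by $K_r$-\emph{embeddable structures} (Definition~\ref{def2.7}): cliques on $a+b$ clusters with $a+2b=r$, where $b$ clusters each contribute an edge of the $K_r$. This is the key idea of Knierim--Su (Lemmas~\ref{lem4.1}--\ref{lem4.2}) and is precisely what pushes the degree threshold down from $1-\tfrac1r$ to $1-\tfrac2r$; a plain Tur\'an argument in the ordinary reduced graph does not reach it.
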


It is natural to study Problem~\ref{pro1.1} for arbitrary $H$ other than cliques.
The first attempt would be to prove a result that matches the Alon--Yuster theorem \cite{MR1376050}, which we shall formulate below.
For our problem, we find that the relevant parameter for the minimum degree condition is the vertex arboricity of $H$.
\begin{defn}[]\label{def1.41}
The \emph{vertex arboricity $ar(H)$ of a graph $H$} is the minimum integer $r$ such that the vertices of $H$ can be partitioned into $r$ subsets each of which induces a forest. The corresponding partition is called an \emph{acyclic partition} of $H$. We say that an acyclic partition of $H$ is \emph{optimal} if it has exactly $ar(H)$ parts.
\end{defn}

Note that this definition extends that of the chromatic number of $H$, where each color class is required to be an independent set.
Our first result provides an analogue of the Alon--Yuster theorem \cite{MR1376050} in host graphs with sublinear independence number.

\begin{theorem}\label{coro1.311}
Given $\mu >0$, $h\in \mathbb{N}$ with $h\geq 3$ and an $h$-vertex graph $H$, there exists $\alpha >0$ such that the following holds for sufficiently large $n\in h\mathbb{N}$. Let $G$ be an $n$-vertex graph such that $\delta(G)\geq \max\left\{\left(1-\frac{1}{ar(H)}+\mu\right)n, \left(\frac{1}{2}+\mu\right)n\right\}$ and $\alpha(G)\leq \alpha n$. Then $G$ contains an $H$-factor.
\end{theorem}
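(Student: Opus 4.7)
The plan is to follow the standard absorption-plus-almost-tiling framework. Fix an optimal acyclic partition $V(H) = U_1 \cup \dots \cup U_r$ with $r = ar(H)$. Since each $H[U_i]$ is a forest, it is bipartite, and a proper 2-colouring $U_i = U_i^1 \cup U_i^2$ of each forest yields a proper $2r$-colouring of $H$ in which pairs of colour classes induce forests. Write $d = \max\{1-1/r,\,1/2\} + \mu$ for the minimum-degree constant. The strategy is to locate inside $G$ a $2r$-partite super-regular structure to which the Blow-up Lemma applies, producing an $H$-factor via the above $2r$-colouring. The ``outer'' $K_r$-matching of this structure will come from Hajnal--Szemer\'edi applied to the reduced graph, while the ``inner'' splitting of each cluster into two super-regular halves is precisely where the sublinear independence number is exploited.

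The proof splits into the familiar two lemmas. First, an \emph{absorbing lemma}: there exists $A \subseteq V(G)$ with $|A| = o(n)$ such that for every $W \subseteq V(G) \setminus A$ with $|W| \le \eta n$ and $h \mid |A \cup W|$, the graph $G[A \cup W]$ admits an $H$-factor. This follows from the H\`an--Person--Schacht machinery once one shows that every $h$-set $X \subseteq V(G)$ has $\Omega(n^{s})$ constant-sized ``absorbers'' $S$ with both $G[S]$ and $G[S \cup X]$ having $H$-factors; the construction parallels those in \cite{MR3570984, MR4193066} with single copies of $K_r$ replaced by copies of $H$, and uses the minimum-degree and independence hypotheses to build ``$H$-connectors'' between arbitrary tuples. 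Second, an \emph{almost-perfect tiling lemma}: $G' := G - A$ contains an $H$-tiling covering all but $o(n)$ vertices. To prove it, apply Szemer\'edi's Regularity Lemma to $G'$ to obtain a reduced graph $R$ with $\delta(R) \ge (d - o(1))|V(R)|$; by Hajnal--Szemer\'edi (Dirac when $r=2$) there is an almost-perfect $K_r$-matching $\mathcal{M}$ in $R$. For each tile with clusters $V_1, \dots, V_r$, super-regularise the cross-cluster pairs; then, using $\alpha(G) = o(n)$ inherited by $G[V_i]$, find a bipartition $V_i = V_i^1 \cup V_i^2$ balanced in the ratio $|U_i^1|:|U_i^2|$ for which $G[V_i^1, V_i^2]$ is itself $\eps$-super-regular. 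The resulting $2r$-partite structure $\{V_i^k : i \in [r],\, k \in [2]\}$ is super-regular in all pairs and supports the $2r$-coloured skeleton of $H$, so the Blow-up Lemma yields an $H$-factor on each tile. Combined with the absorbing set, this produces the desired $H$-factor.

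The main technical obstacle is the inner bipartition step: for the Blow-up Lemma to produce an exact $H$-factor, the bipartition $V_i = V_i^1 \cup V_i^2$ must simultaneously (i) be balanced in the ratio $|U_i^1|:|U_i^2|$, (ii) make $G[V_i^1, V_i^2]$ super-regular, and (iii) preserve the super-regularity of every cross-cluster pair $G[V_i, V_j]$. The key conceptual point is that if no such bipartition of $V_i$ existed, then $V_i$ would contain a near-independent set of linear size, contradicting $\alpha(G) = o(n)$; however, converting this observation into a \emph{constructive} bipartition — and doing so simultaneously for all clusters of all tiles of $\mathcal{M}$, while also balancing the cluster sizes against the sizes $|U_i^k|$ so that an exact $H$-factor (rather than merely a $K_r$-tiling of clusters) emerges — is the technically most delicate part. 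The low-arboricity case $r \le 2$, where the degree threshold is governed by the $1/2$-floor and the ``outer'' $K_r$-matching reduces to a Dirac-type perfect matching in $R$, is handled by a direct argument in the spirit of Balogh--Molla--Sharifzadeh \cite{MR3570984}.
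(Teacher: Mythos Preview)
Your proposal has two genuine gaps, both of which the paper explicitly confronts.

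\textbf{Absorbing.} You invoke the H\`an--Person--Schacht machinery, which requires every $h$-set to have $\Omega(n^{s})$ absorbers of some fixed size $s$. But as the paper points out (following \cite{MR3570984}), this polynomial counting is unavailable in the sublinear-independence-number setting: building an absorber entails embedding copies of $H$, and each such copy must place a forest inside a set whose only guaranteed structure is $\alpha = o(n)$. That hypothesis yields \emph{existence} of the forest but not $\Omega(n^{|\text{forest}|})$ many labelled copies, so the total absorber count falls short of $\Omega(n^{s})$. The paper instead uses the Nenadov--Pehova scheme \cite{MR4080942}, which needs only $\Omega(n)$ \emph{vertex-disjoint} absorbers per $h$-set, and manufactures those via the lattice-based reachability method of \cite{MR3632565} together with a partition $V(G)=B\cup U$ where $|B|=o(n)$ and $U$ is inner-closed.

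\textbf{Tiling.} You propose to bipartition each cluster $V_i = V_i^1 \cup V_i^2$ so that $G[V_i^1, V_i^2]$ is $\eps$-super-regular, and then run the Blow-up Lemma on the resulting $2r$-partite system. But small independence number does not force any such bipartition. Take $G[V_i]$ to be a disjoint union of $\alpha' m$ cliques each of size $1/\alpha'$: then $\alpha(G[V_i]) = \alpha' m$, yet every vertex has only $1/\alpha' - 1 = O(1)$ neighbours inside $V_i$, so no bipartition is $(\eps,d)$-super-regular for any constant $d>0$ and the Blow-up Lemma is inapplicable. Your heuristic ``if no such bipartition existed, $V_i$ would contain a near-independent set of linear size'' is simply false. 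The paper avoids the Blow-up Lemma entirely: it works in a reduced \emph{multigraph} (double edges for pairs of density above $\tfrac12$), takes a fractional tiling by $K_{f(H)}$-embeddable structures following \cite{MR4193066}, and embeds an auxiliary graph $Q(a,b)$ (which contains an $H$-factor) one copy at a time via a bespoke embedding lemma (Lemma~\ref{lem2.17}) that places forests inside clusters using the Erd\H{o}s--Hajnal--S\'os--Szemer\'edi result \cite{MR716422} rather than super-regularity. The present theorem then drops out of Theorem~\ref{thm1.3} via the inequality $f(H)\le 2\,ar(H)$.
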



Similar to the Alon--Yuster theorem \cite{MR1376050}, the minimum degree condition in Theorem~\ref{coro1.311} is asymptotically tight for infinitely many graphs $H$ which we define now.
Let $H$ be a graph and $\mathscr P$ be a family of vertex partitions of $H$ where each partition has $\ell$ parts.
For $\mathcal P\in \mathscr P$, let $x_{1}\leq x_{2}\leq\cdots \leq x_{\ell}$ denote the sizes of the parts of $\mathcal P$.
Put $\mathcal{D}(\mathcal P):=\{x_{i+1}-x_{i}\mid i=1, \dots, \ell-1\}$.
Let $\mathcal{D}(H, \mathscr P)$ denote the union of all the sets $\mathcal{D}(\mathcal P)$ taken over all $\mathcal P\in \mathscr P$.
Let $\mathrm{hcf}_{1}(H, \mathscr P)$ be the highest common factor of all integers in $\mathcal{D}(H, \mathscr P)$.
(If $\mathcal{D}(H, \mathscr P)=\emptyset$, then we set $\mathrm{hcf}_{1}(H, \mathscr P):=\infty$.)
Let $\mathrm{hcf}_{2}(H, \mathscr P)$ be the highest common factor of all the orders of components of $H$.
If $\ell=1$ and $\mathrm{hcf}_{2}(H, \mathscr P)=1$, then $\mathrm{hcf}(H, \mathscr P)=1$.
If $\ell=2$, $\mathrm{hcf}_{2}(H, \mathscr P)=1$ and $\mathrm{hcf}_{1}(H, \mathscr P)\leq 2$, then $\mathrm{hcf}(H, \mathscr P)=1$.
If $\ell \geq3$ and $\mathrm{hcf}_{1}(H, \mathscr P)=1$, then $\mathrm{hcf}(H, \mathscr P)=1$.

Given a graph $H$, let $\text{PC}$ be the family of all proper colorings of $H$ with $\chi(H)$ colors, and let $\text{AP}$ be the family of all acyclic partitions of $H$ with $ar(H)$ parts.
Then for $\ell\geq 2$, whether $\mathrm{hcf}(H, \text{PC})=1$ or not is the dichotomy found by K\"{u}hn and Osthus \cite{DKDO2009} for the minimum degree conditions for $H$-factors in general graphs.
That is, the term $1-\frac{1}{\chi(H)}$ in the Alon--Yuster theorem is asymptotically tight if and only if $\mathrm{hcf}(H, \text{PC})\neq 1$. In the problem of K\"{u}hn and Osthus \cite{DKDO2009}, $\ell=1$ implies that $H$ is an independent set, which is a trivial case. However, for our problem, $\ell=1$ means that $H$ is a forest, in which case we have the following result.

\begin{theorem}\label{thm1.2345}
Given $\mu >0$, $h\in \mathbb{N}$ with $h\geq 3$ and an $h$-vertex forest $H$ with $\mathrm{hcf}_{2}(H, \text{AP})=1$, there exists $\alpha >0$ such that the following holds for sufficiently large $n\in h\mathbb{N}$. Let $G$ be an $n$-vertex graph such that $\delta(G)\geq \mu n$ and $\alpha(G)\leq \alpha n$. Then $G$ contains an $H$-factor.
\end{theorem}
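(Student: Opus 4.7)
Since $H$ is a forest, $ar(H)=1$, and the hypothesis $\mathrm{hcf}_2(H,\text{AP})=1$ reduces to the statement that the orders $s_1,\dots,s_k$ of the components $C_1,\dots,C_k$ of $H$ satisfy $\gcd(s_1,\dots,s_k)=1$. My plan follows the standard absorption-method architecture, with the new input being an absorbing lemma that exploits this coprimality.

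\emph{Almost-perfect cover.} Fix $\alpha\ll\beta\ll\mu,1/h$. Using techniques analogous to those in \cite{MR3570984,MR4193066} (regularity plus Ramsey-type clique-finding, which suffices because $\alpha(G)\le\alpha n$ gives $K_h\supseteq H$ in every linear-sized vertex set), one shows that $G$ admits an $H$-tiling covering all but at most $\beta^2 n$ vertices. Since $H$ is a forest and $H\subseteq K_h$, finding a copy of $H$ reduces to finding $K_h$, which is comparatively cheap here.

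\emph{Absorbing lemma.} The main new ingredient. For every vertex $v\in V(G)$, I build many bounded-size ``$v$-absorbers'' --- subsets $A\subseteq V(G)\setminus\{v\}$ with $|A|\equiv -1\pmod{h}$ such that $G[A\cup\{v\}]$ admits an $H$-factor. These are assembled from copies of the components $C_i$ placed around $v$: the small independence number of $N(v)$ yields many copies of each $C_i$ containing $v$ at a leaf position, and the coprimality $\gcd(s_1,\dots,s_k)=1$ is used (via a B\'{e}zout combination $\sum a_is_i=1$) to adjust component multiplicities so that the assembled copies regroup into full copies of $H$ rather than merely a $\{C_i\}$-tiling with the wrong multiplicity vector. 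With a supply of $v$-absorbers for every $v$, a random-sampling argument (R\"{o}dl--Ruci\'{n}ski--Szemer\'{e}di, cf.\ Nenadov--Pehova~\cite{MR4080942}) produces an absorbing set $X\subseteq V(G)$ with $|X|=o(n)$, $h\mid|X|$, which absorbs every leftover $L$ with $|L|\le\beta^2 n$ and $h\mid|L|$.

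\emph{Combine and main obstacle.} Set $X$ aside first, apply the cover to $G\setminus X$ to obtain a leftover $L$ of size at most $\beta^2 n$ (with $h\mid|L|$ by divisibility of $n$ and $|X|$), and absorb $L$ using $X$. The principal obstacle is the construction of absorbers: the coprimality hypothesis must be translated into concrete local adjust operations that preserve the $H$-structure --- not just the multiset of component sizes --- of the partial tiling. This is exactly where the condition $\mathrm{hcf}_2(H,\text{AP})=1$ enters and is necessary: if $\gcd(s_i)=d>1$, local component adjustments would preserve residues modulo $d$, obstructing the absorption of arbitrary leftovers of size divisible by $h$.
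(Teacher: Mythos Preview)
First, a framing note: the paper does not actually prove this theorem --- it states that the proof ``follows the lattice-based absorbing method \cite{MR3632565}'' and defers it to the first author's thesis. So the benchmark is the reachability/lattice framework used elsewhere in the paper (Lemmas~\ref{lem5.2}--\ref{lem5.10}): show $V(G)$ is $(H,\beta n,t)$-closed, which in this setting boils down to producing transferrals from the coprimality of the component orders, and then invoke Lemma~\ref{lem5.2} to obtain $(H,t)$-absorbers for every $h$-set.

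Your architecture and your identification of the B\'{e}zout/coprimality role are both on target, but there is a genuine gap in the absorbing step. A ``$v$-absorber'' $A$ with $|A|\equiv -1\pmod h$ and only the one-sided property that $G[A\cup\{v\}]$ has an $H$-factor is not enough to run either the R\"{o}dl--Ruci\'{n}ski--Szemer\'{e}di or the Nenadov--Pehova construction. Both require two-state gadgets: for an $h$-set $S$, a set $A_S$ such that \emph{both} $G[A_S]$ and $G[A_S\cup S]$ have $H$-factors. This is exactly what lets the absorbing set $X$ carry an $H$-factor when nothing is attached, with each absorber able to sit ``off'' (contributing its own $H$-factor) or ``on'' (swallowing part of the leftover). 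Your $A$ can never be ``off'', since $h\nmid|A|$; an absorbing set assembled from such pieces has no default $H$-factor to fall back on, and the random-sampling step you cite does not repair this. The fix is to turn your B\'{e}zout idea into a \emph{reachability} statement: for any $u,v$, find $S$ of size $ht-1$ such that both $G[S\cup\{u\}]$ and $G[S\cup\{v\}]$ have $H$-factors, by placing $u$ in a copy of some component $C_i$, $v$ in a copy of some $C_j$, and using a non-negative integer identity $\sum_\ell a_\ell s_\ell=\sum_\ell b_\ell s_\ell$ with $a_i-b_i=-(a_j-b_j)=1$ (available precisely because $\gcd(s_\ell)=1$) to balance the remaining components into full copies of $H$ in two different ways. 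That is exactly the transferral/closure step in the lattice method, after which Lemmas~\ref{lem5.2} and~\ref{lem5.1} finish the job.
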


Note that the degree condition in Theorem \ref{thm1.2345} is quite small. Its proof follows the lattice-based absorbing method \cite{MR3632565} used in previous works as well as in this paper. The proof is omitted here and will appear in the first author's doctoral thesis.

For our problem, we show that the minimum degree condition in Theorem~\ref{coro1.311} is asymptotically tight for $H$ with $\mathrm{hcf}(H, \text{AP})\neq 1$ and conjecture that the relation is actually ``if and only if'' (see Conjecture~\ref{conj} below). The proof of the following Proposition \ref{prop5.111} is presented in full details in Section \ref{sec555}.

\begin{prop}[]\label{prop5.111}
Given $\alpha>0$ and $h\in \mathbb{N}$, let $H$ be an $h$-vertex graph with $\mathrm{hcf}(H, \text{AP})\neq 1$.
Then the following holds for sufficiently large $n$.
There exists an $n$-vertex graph $G$ with $\delta(G)\geq \max\{(1-\frac{1}{ar(H)})n-2, \frac{n}{2}-2\}$ and $\alpha(G)\leq \alpha n$ such that $G$ has no $H$-factor.
\end{prop}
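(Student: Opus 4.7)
The plan is to exhibit explicit extremal graphs, handling two complementary cases according to the reason $\mathrm{hcf}(H,\text{AP})\ne 1$. First, suppose $ar(H)\ge 3$ and $\mathrm{hcf}_1(H,\text{AP})=d\ge 2$, or $ar(H)=2$ and $\mathrm{hcf}_1(H,\text{AP})\ge 3$ (possibly $\infty$). Set $r=ar(H)$ and construct $G$ as a ``twisted blow-up'' of $K_r$: partition $V(G)=A_1\cup\cdots\cup A_r$ with each $|A_i|$ close to $n/r$, place a complete bipartite graph between every pair $A_i,A_j$ with $i\ne j$, and install inside each $A_i$ a graph of girth larger than $h$ and independence number at most $\alpha|A_i|$. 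The existence of such a host graph is guaranteed by Erd\H{o}s's classical probabilistic theorem (take $G(|A_i|,p)$ with $p$ slightly above $|A_i|^{-1+1/h}$ and delete one vertex from every short cycle). The sizes are chosen with an $O(1)$ perturbation around the balanced value so that $\sum_i|A_i|=n$ and the residues $|A_i|\pmod d$ are not all equal (for example, shift two of them by $\pm 1$ when $d\ge 3$, and use a parity-based adjustment when $d=2$).

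To verify the three properties in this case, note that since all cross-edges are present, $\delta(G)\ge n-\max_i|A_i|\ge(1-1/r)n-2$, and any independent set of $G$ lies inside a single $A_i$, giving $\alpha(G)\le\max_i\alpha(G[A_i])\le\alpha n$. To rule out an $H$-factor, take any embedding $\phi\colon V(H)\hookrightarrow V(G)$; for each $i$, the induced subgraph $H[\phi^{-1}(A_i)]$ embeds into $G[A_i]$ on at most $h$ vertices, so by the girth condition it must be acyclic. Thus $\{\phi^{-1}(A_i)\}_{i=1}^r$ is an acyclic partition of $H$ with at most $r$ parts, forced to be optimal since $ar(H)=r$ (no part can be empty). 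All part sizes of any such optimal partition are congruent modulo $d$, so each copy of $H$ contributes to $(|A_1|,\dots,|A_r|)$ a tuple whose coordinates share a common residue modulo $d$; summing over all copies in a hypothetical $H$-factor forces $(|A_1|,\dots,|A_r|)$ to be constant modulo $d$, contradicting the choice of sizes.

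In the remaining case, $ar(H)\le 2$ and $\mathrm{hcf}_2(H,\text{AP})=e\ge 2$, I would take $G=K_a\sqcup K_b$, a disjoint union of two cliques, with $a+b=n$, $|a-b|\le 2$, and $a\not\equiv 0\pmod e$; among any three consecutive integers at least one satisfies the latter condition for $e\ge 2$, so a suitable $a\in\{\lfloor n/2\rfloor-1,\lfloor n/2\rfloor,\lfloor n/2\rfloor+1\}$ exists. Then $\alpha(G)=2$ and $\delta(G)=\min(a,b)-1\ge n/2-2$. Any copy of $H$ in $G$ has every connected component wholly inside $K_a$ or wholly inside $K_b$ (there are no edges between the cliques), and by $\mathrm{hcf}_2=e$ each component has order divisible by $e$. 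A hypothetical $H$-factor would therefore cover a multiple of $e$ vertices inside $K_a$, forcing $e\mid a$, a contradiction.

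The main hurdle is establishing the high-girth, small-independence ingredient in the first construction: one needs, for every fixed $g$ and every $\alpha>0$, graphs on $m$ vertices (for all large $m$) with girth greater than $g$ and independence number at most $\alpha m$. This is supplied by Erd\H{o}s's probabilistic deletion argument, possibly followed by a disjoint-union rescaling step to accommodate arbitrary values of $m=|A_i|$. A secondary bookkeeping issue is choosing the part sizes (or $a,b$) so that the mod-$d$ (or mod-$e$) obstruction, the divisibility $h\mid n$, and the tight minimum-degree bound $\delta(G)\ge\max\{(1-1/ar(H))n-2,\,n/2-2\}$ all hold simultaneously; this splits into a finite number of routine subcases depending on the residues of $h$, $n$, $d$, and $e$.
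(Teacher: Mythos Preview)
Your proposal is correct and follows essentially the same approach as the paper's proof. Both use Erd\H{o}s's high-girth/low-independence graphs inside the parts of a complete $r$-partite frame (with slightly unbalanced part sizes) to obstruct the $\mathrm{hcf}_1$ condition, and a disjoint union of two cliques to obstruct the $\mathrm{hcf}_2$ condition; the only differences are cosmetic (the paper organises the case split by the value of $ar(H)$ rather than by which $\mathrm{hcf}$ fails, and makes slightly more specific choices of part sizes).
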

Comparing with Theorem \ref{coro1.311}, we actually prove a slightly stronger result. To state it, we first introduce some notation.

\begin{defn}[]\label{def1.42}
Let $\widetilde{\mathcal{H}}$ be a family of graphs such that every element $H\in \widetilde{\mathcal{H}}$ has an acyclic partition $\mathcal{P}=\{T_{1},\ldots, T_{r}\}$ with $r:=ar(H)$ such that i) $H[T_{1}]$ is an independent set and ii) $2|T_{1}|=|T_{i}|$ for each $i\in [2, r]$.
%
Given a graph $H$, let $f(H)$ be an integer such that
\[
f(H) := \begin{cases} 2ar(H)-1 \quad \text{ if } H\in \widetilde{\mathcal{H}}; \\
2ar(H) \hfill \text{ otherwise}.
\end{cases}
\]
\end{defn}
Note that all cliques of odd order belong to $\widetilde{\mathcal{H}}$ and $f(K_{r})=r$. Our main result reads as follows.
\begin{theorem}\label{thm1.3}
Given $\mu >0$, $h\in \mathbb{N}$ and an $h$-vertex graph $H$, there exists $\alpha >0$ such that the following holds for sufficiently large $n\in h\mathbb{N}$. Let $G$ be an $n$-vertex graph such that $\delta(G)\geq \max\left\{\left(1-\frac{2}{f(H)}+\mu\right)n, \left(\frac{1}{2}+\mu\right)n\right\}$ and $\alpha(G)\leq \alpha n$. Then $G$ contains an $H$-factor.
\end{theorem}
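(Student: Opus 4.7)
My plan is to run the absorbing method adapted to the Ramsey--Tur\'{a}n setting, in the spirit of Knierim--Su for cliques, extended to general $H$ via the vertex arboricity. Fix an optimal acyclic partition $\mathcal{P}=\{T_1,\dots,T_r\}$ of $H$ with $r=ar(H)$; when $H\in\widetilde{\mathcal{H}}$, choose $\mathcal{P}$ so that $T_1$ is independent and $|T_i|=2|T_1|$ for $i\geq 2$. The partition identifies the ``roles'' that vertices of $G$ will play in embedded copies of $H$: one role of size $|T_1|$ hosting a forest (or independent set), and $r-1$ roles of size $|T_i|$ hosting forests. The first technical step is an \emph{embedding lemma}: under the hypotheses of the theorem, any constant-size partial assignment of vertices to roles extends to a full embedded copy of $H$ in $G$. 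The degree threshold $1-2/f(H)+\mu$ is tuned precisely so that the role-sizes $|T_1|,|T_2|,\dots,|T_r|$ fit as a balanced partition of a linear-sized vertex set, while $\alpha(G)=o(n)$ is exactly what guarantees the required forest subgraphs inside each role (a graph with sublinear independence number has sublinear vertex arboricity via Gallai--Milgram type arguments).

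Given the embedding lemma, I build a small absorbing set $A$ of size $o(n)$. For every $v\in V(G)$, the embedding lemma should yield $\Omega(n^{c})$ \emph{absorbers} for $v$ --- constant-size sets $S\subseteq V(G)\setminus\{v\}$ such that both $G[S]$ and $G[S\cup\{v\}]$ admit $H$-tilings that are interchangeable through $v$. A random-sampling plus alteration argument then produces $A$ with $|A|$ divisible by $h$, $G[A]$ having an $H$-factor, and $G[A\cup U]$ having an $H$-factor for every small $U\subseteq V(G)\setminus A$ with $h\mid |A\cup U|$. Next, I apply the Szemer\'{e}di Regularity Lemma to $G-A$; the reduced graph $R$ inherits both the minimum degree bound and $\alpha(R)=o(|V(R)|)$. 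Invoking the embedding lemma at the cluster level produces a near-perfect $H$-tiling of $R$, which blows up to an $H$-tiling of $G-A$ covering all but $o(n)$ vertices. The remainder is absorbed by $A$ to complete the $H$-factor.

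The main obstacle will be the embedding lemma for $H\in\widetilde{\mathcal{H}}$. A naive $r$-partite argument works only with the weaker threshold $\delta(G)\geq(1-1/ar(H)+\mu)n$, but reaching the sharper $1-2/(2ar(H)-1)+\mu$ forces one to exploit the fact that $T_1$ is independent and only half the size of the other parts: $H$ must be embedded into an asymmetric ``$1+2+\cdots+2$'' partition of sizes rather than a balanced $r$-partition. Coordinating the small independent role with the larger forest roles under only sublinear independence number is where the delicate work lies; here a Knierim--Su style expansion-vs-extremal dichotomy should carry the heavy lifting, with the sublinear independence allowing the independent-set-role to be embedded cheaply inside any linear-sized vertex set. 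A secondary subtlety is lattice-theoretic: the absorber construction must respect any divisibility obstruction in the type-lattice of $H$ under $\mathcal{P}$, which is precisely the phenomenon controlled by $\mathrm{hcf}(H,\text{AP})$ and whose non-triviality underlies the tightness of the degree condition shown in Proposition~\ref{prop5.111}.
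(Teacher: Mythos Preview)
Your outline has two concrete gaps that the paper's approach is designed to avoid.

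First, the claim that the reduced graph $R$ ``inherits $\alpha(R)=o(|V(R)|)$'' is false: the regularity partition only records densities between clusters, and there is no mechanism by which $\alpha(G)=o(n)$ forces $\alpha(R)$ to be small. This is precisely why the paper does \emph{not} run the argument on an ordinary reduced graph. Instead it uses a reduced \emph{multigraph} $R_{\beta,\varepsilon}$ in which a pair of clusters is joined by a double-edge when its density exceeds $\tfrac12+\beta$, and then works with Knierim--Su's $K_{f(H)}$-embeddable structures (Definition~\ref{def2.7}) together with a fractional tiling result (Lemma~\ref{lem4.4}). The sublinear independence of $G$ is exploited not in $R$ but inside individual clusters, via an embedding lemma (Lemma~\ref{lem2.17}) that uses Erd\H{o}s--Hajnal--S\'{o}s--Szemer\'{e}di to plant forests inside each cluster of a double-edge and independent sets inside clusters with $i_{\mathcal{K}}=1$; this is packaged through auxiliary graphs $Q(a,b)$ (Definition~\ref{def2.15}, Lemma~\ref{lem2.16}) that carry an $H$-factor.

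Second, the absorber-counting step ``the embedding lemma should yield $\Omega(n^{c})$ absorbers for $v$'' is exactly the step that fails in the Ramsey--Tur\'{a}n setting, as the paper itself points out in Section~1.2: when the copies of $H$ are produced using the independence-number hypothesis, one does not get polynomial supersaturation, so the R\"{o}dl--Ruci\'{n}ski--Szemer\'{e}di random-sampling construction is unavailable. The paper instead uses the Nenadov--Pehova criterion (Lemma~\ref{lem5.1}), which only requires $\Omega(n)$ \emph{vertex-disjoint} absorbers for every $h$-set $S$, and it obtains these via the lattice-based method: build a partition $V(G)=B\cup U$ with $|B|=o(n)$ and $U$ $(H,\beta n,t)$-inner-closed (Lemma~\ref{lem7.1}), derive the disjoint absorbers inside $U$ from closedness (Lemma~\ref{lem5.2}), and then cover $B$ by a small $H$-tiling disjoint from the absorbing set (Lemma~\ref{lem5.7}). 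The transferral needed to merge parts of the closedness partition (Lemma~\ref{lem5.10}) is again found through the multigraph reduced graph and crossing $K_{f(H)+1}$-embeddable structures, not through any property of $R$ as an ordinary graph.
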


Since $f(H)\le 2ar(H)$, Theorem~\ref{coro1.311} follows from Theorem \ref{thm1.3}.
Theorem \ref{thm1.3} unifies and generalises the results of Balogh--Molla--Sharifzadeh~\cite{MR3570984} and Knierm--Su~\cite{MR4193066} on clique factors.
We remark that the minimum degree condition in Theorem \ref{thm1.3} is asymptotically sharp for (all graphs $H$ with ${\mathrm{hcf}}(H, \text{AP})\neq 1$ and) infinitely many graphs $H$ with ${\mathrm{hcf}}(H, \text{AP})=1$.
To illustrate this we introduce the following notation.

\begin{defn}[]\label{def1.31}
The \emph{critical arboricity} $ar_{cr}(H)$ of a graph $H$ is defined as $\frac{\left(ar(H)-1\right)|V(H)|}{|V(H)|-\sigma(H)}$, where $\sigma(H)$ denotes the minimum size of a part taken over all optimal acyclic partitions of $H$.
\end{defn}

Note that $ar(H)-1 < ar_{cr}(H)\le ar(H)$.
We supply the following general lower bound serving as a ``space barrier'' for this problem.

\begin{prop}\label{prop1.8}
Let $h\in \mathbb{N}$ and $H$ be an $h$-vertex graph. Then for any $\alpha>0$, the following holds for sufficiently large $n\in h\mathbb{N}$. There exists an $n$-vertex graph $G$ with $\delta(G)\geq\left(1-\frac{1}{ar_{cr}(H)}\right)n-1$ and $\alpha(G)\leq \alpha n$ which does not have an $H$-factor.
\end{prop}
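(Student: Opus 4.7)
Write $r = ar(H)$, $\sigma = \sigma(H)$, $h = |V(H)|$, and set $M = (h-\sigma)/(r-1)$, so that $1/ar_{cr}(H) = M/h$. The plan is a standard space-barrier construction. Partition $V(G)$ into $r$ classes $V_0 \cup V_1 \cup \cdots \cup V_{r-1}$ with $|V_0| = \lceil \sigma n/h\rceil - 1$ (one short of its ``natural'' value $\sigma n/h$) and each $|V_i|$ for $i \geq 1$ of size at most $\lceil Mn/h\rceil$, adjusted so that $\sum_i |V_i| = n$. Join all pairs lying in different classes, and inside each class $V_i$ install a pseudorandom graph $R_i$ with $\alpha(R_i) \leq \alpha\cdot|V_i|$; additionally, for $i \geq 1$ the graph $R_i$ should have the property that no $(M+1)$-vertex induced subgraph of $H$ appears as a subgraph of $R_i$.

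The minimum-degree and independence-number bounds are then immediate. Any $v \in V_j$ satisfies $\deg_G(v) \geq n - |V_j| \geq (1 - M/h)n - 1 = (1 - 1/ar_{cr}(H))n - 1$; and since the graph between distinct classes is complete, every independent set of $G$ lies in a single class, giving $\alpha(G) = \max_i \alpha(R_i) \leq \alpha n$. For the nonexistence of an $H$-factor, suppose $\mathcal{F}$ is one and write $u_i^K = |V(K) \cap V_i|$ for $K \in \mathcal{F}$. The forbidden-subgraph property of $R_i$ forces $u_i^K \leq M$ for each $i \geq 1$, for otherwise the embedded copy $K$ restricted to $V_i$ would realise an $(M+1)$-vertex induced subgraph of $H$ as a subgraph of $R_i$. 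Hence $u_0^K \geq h - (r-1)M = \sigma$, and summing over $K \in \mathcal{F}$ yields $|V_0| \geq \sigma\cdot(n/h) > |V_0|$, a contradiction.

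\textbf{Main obstacle.} The nonroutine step is the simultaneous realisation of the pseudorandom graphs $R_i$ with both a sublinear independence number and the correct list of forbidden subgraphs. For cliques $H = K_r$ the only relevant forbidden shape is $K_3$, so $R_i$ can be taken to be a classical dense triangle-free Ramsey--Tur\'an pseudorandom graph with sublinear independence number, which recovers precisely the extremal example of \cite{MR4193066}. For general $H$ the family of $(M+1)$-vertex induced subgraphs of $H$ is richer, and one must ensure that a single $R_i$ avoids every such shape simultaneously while $\alpha(R_i)$ stays sublinear. Notice that empty $(M+1)$-vertex shapes coming from independent sets of size $M+1$ in $H$ would impose $\alpha(R_i) \leq M$, which in combination with $K_{M+1} \subseteq H$ (if present) clashes through Ramsey's theorem; the construction must therefore either exploit the absence of one side of this clash in the specific $H$ under consideration, or be replaced by a careful probabilistic or algebraic construction parametrised by $H$. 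This balancing act between sublinear independence number and the prescribed list of forbidden $(M+1)$-vertex shapes is where I expect the proof to require the most care.
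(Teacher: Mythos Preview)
Your skeleton is correct—the space barrier with one deliberately undersized class is exactly what the paper does—but the realisation of the graphs $R_i$ is where your proposal and the paper diverge, and your version has a genuine gap that cannot be patched along the lines you sketch.

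The forbidden-subgraph route fails for concrete $H$. Take $H = K_6 \cup \overline{K_3}$, so $h=9$, $r = ar(H) = 3$, $\sigma = 2$, and $M = 7/2$. Your forbidden list on $\lfloor M\rfloor + 1 = 4$ vertices then contains the edgeless graph (since $\alpha(H) = 4$), which is a subgraph of any $R_i$ on four or more vertices and hence cannot be excluded. Switching to \emph{induced} forbidden subgraphs does not save the argument: you would need $R_i$ with neither an induced $\overline{K_4}$ nor an induced $K_4$ (both occur induced in this $H$), which Ramsey forbids on linearly many vertices; and even if such $R_i$ existed, the restriction $K[V(K)\cap V_i]$ is only a subgraph of $R_i$, not an induced one, so the implication $u_i^K \le M$ would no longer follow. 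The ``balancing act'' you flag is therefore not merely delicate—it is impossible in general.

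The paper's resolution is a single clean idea that replaces all of this: take each $G[V_i]$ to be a graph of \emph{girth greater than $h$} and independence number at most $\alpha|V_i|$, which exists by the classical Erd\H{o}s theorem. Then for any copy $H'$ of $H$ in $G$, each slice $V(H')\cap V_i$ spans a subgraph of $G[V_i]$ on at most $h$ vertices, hence acyclic; so $\{V(H')\cap V_i : i\in[r]\}$ is an acyclic partition of $H'$ into $r = ar(H)$ parts. All parts must be non-empty (else $ar(H)<r$), so the partition is optimal, and by the definition of $\sigma(H)$ every part—in particular the one lying in the short class—has at least $\sigma$ vertices. The counting then finishes exactly as you wrote. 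Note that this argument bounds $u_i^K$ from \emph{below} by $\sigma$ for every $i$, rather than from above by $M$ for $i\ge 1$; the conclusion is the same, but the lower-bound route plugs directly into the definition of $\sigma(H)$ and requires no $H$-specific construction.
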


In particular, Proposition \ref{prop1.8} implies that the degree condition in Theorem \ref{thm1.3} is tight for the graphs $H$ with $f(H)=2ar_{cr}(H)$.
In particular, when $ar_{cr}(H)<ar(H)$, $f(H)=2ar_{cr}(H)$ implies that $H\in \widetilde{\mathcal{H}}$ and thus $\mathrm{hcf}(H, \text{AP})=1$.
The following is an example for such $H$.
Let $r\in \mathbb{N}$ and $H$ be an $(r+1)$-partite graph $K_{2r+1, \dots, 2r+1}$.
By definition we have $K_{2r+1, \dots, 2r+1}\in \widetilde{\mathcal{H}}, f(K_{2r+1, \dots, 2r+1})=2r+1$ and $ar_{cr}(K_{2r+1, \dots, 2r+1})=\frac{2r+1}{2}$. The proof of Proposition \ref{prop1.8} is presented in full details in Section \ref{sec555}.



In summary, the degree condition in Theorem \ref{thm1.3} is tight for the graphs $H$ with i) ${\mathrm{hcf}}(H, \text{AP})\\ \neq 1$ and ii) $H\in \widetilde{\mathcal{H}}$ and $ar_{cr}(H) = ar(H)-1/2$.
Nevertheless, we put forward the following conjecture for the $H$-factor problem.
\begin{conjecture}\label{conj}
Given $\mu >0$, $h\in \mathbb{N}$ and an $h$-vertex graph $H$ with ${\mathrm{hcf}}(H, \text{AP})= 1$, there exists $\alpha >0$ such that the following holds for sufficiently large $n\in h\mathbb{N}$. Let $G$ be an $n$-vertex graph such that $\delta(G)\geq \max\left\{\left(1-\frac{1}{ar_{cr}(H)}+\mu\right)n, \left(\frac{1}{2}+\mu\right)n\right\}$ and $\alpha(G)\leq \alpha n$. Then $G$ contains an $H$-factor.
\end{conjecture}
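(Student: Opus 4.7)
The plan is to adapt the lattice-based absorbing framework of~\cite{MR3570984, MR4193066} to arbitrary $H$, with a refined structural input that exploits $\mathrm{hcf}(H, \text{AP})=1$ to drive the degree threshold from $1-1/ar(H)$ down to $1-1/ar_{cr}(H)$. Write $r := ar(H)$ and $\sigma := \sigma(H)$; the $r=1$ case is already Theorem~\ref{thm1.2345}, so assume $r\ge 2$. The proof would proceed in three stages: (i) construction of an \emph{absorber} $A\subseteq V(G)$ of size $o(n)$ such that $G[A\cup R]$ has an $H$-factor for every suitably ``balanced'' remainder $R$ of size $o(n)$; (ii) an \emph{almost-perfect} $H$-tiling of $G-A$; and (iii) a check that the uncovered set of (ii) is admissible for (i).

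For (ii), I would apply the regularity lemma and use $\alpha(G)=o(n)$ together with stability to argue that the reduced graph is essentially a blow-up of the extremal construction from Proposition~\ref{prop1.8}: a partition $V(G)=V_{1}\cup\cdots\cup V_{r}$ with $|V_{1}|\approx(\sigma/h)n$ and $|V_{i}|\approx((h-\sigma)/((r-1)h))n$ for $i\ge 2$. The threshold $(1-1/ar_{cr}(H)+\mu)n$ is tuned precisely so that every vertex misses strictly less than one part $V_{i}$ with $i\ge 2$. This is exactly what is needed to locate copies of $H$ realized by an optimal acyclic partition $(T_{1},\ldots,T_{r})$ with $|T_{1}|=\sigma$ embedded into $V_{1}$ and each $T_{i}$ embedded into $V_{i}$, covering all but $o(n)$ vertices after the usual regularity clean-up. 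The auxiliary floor $(1/2+\mu)n$ handles the degenerate regime $ar_{cr}(H)<2$ (which arises for certain bipartite-arboricity $H$) via the triangle-factor input of~\cite{MR3570984}.

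For (i), I would follow Montgomery's random sampling scheme~\cite{MR3632565} combined with the explicit lattice analysis of~\cite{MR4193066}. The central structural claim is that, when $\mathrm{hcf}(H, \text{AP})=1$, the lattice generated by ``partition-type vectors'' of copies of $H$ in $G$ equals the full integer lattice on the hyperplane $\{x\in\mathbb{Z}^{r}:\sum_{i} x_{i}=h\}$. For every standard difference $e_{i}-e_{j}$ one must produce $\Omega(n^{h})$ ordered pairs of copies of $H$ realizing it; the cases $\ell=2$ and $\ell\ge 3$ in the definition of $\mathrm{hcf}$ drive a case analysis, and one invokes near-optimal acyclic partitions with slightly enlarged minority parts to realize lattice directions that the single optimal partition type alone cannot reach.

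The main obstacle, as I anticipate it, will be reconciling the tight threshold $(1-1/ar_{cr}(H)+\mu)n$ with the lattice richness required by the absorber. The argument of~\cite{MR4193066} enjoys full symmetry across the $r$ directions of $K_{r}$, whereas for arbitrary $H$ the minority class $T_{1}$ (which is independent in the extremal graph) interacts with its complement quite differently from the forest classes $T_{i}$ for $i\ge 2$. Consequently, finding absorbers for $V_{1}$-type vertices appears genuinely harder: a successful proof likely demands a hybrid of optimal acyclic partitions (which give the tight degree threshold) with slightly non-optimal partitions (which unlock the full lattice), together with delicate bookkeeping to show that the single hypothesis $\delta(G)\ge(1-1/ar_{cr}(H)+\mu)n$ supports both ingredients simultaneously.
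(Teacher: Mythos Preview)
The statement you are attempting to prove is a \emph{conjecture} in the paper, not a theorem. The authors write ``we put forward the following conjecture'' and do not claim a proof; the smallest open case they cite is $H=K_{3,3,4}$. Hence there is no proof in the paper to compare your proposal against, and what you have written is in effect a strategy for attacking an open problem.

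On the substance of your proposal, the most serious gap is in stage~(ii). You assert that regularity plus $\alpha(G)=o(n)$ forces the reduced graph to be ``essentially a blow-up of the extremal construction from Proposition~\ref{prop1.8}''. No such stability statement is proved in the paper, and there is no reason to expect it to hold: a graph meeting the degree and independence hypotheses need not be close to the specific $r$-partite construction with part sizes $\sigma n/h$ and $(h-\sigma)n/((r-1)h)$. The paper's own approach to almost-perfect tilings (Lemma~\ref{lem3.1}) works at the higher threshold $1-2/f(H)$ precisely because it does \emph{not} rely on stability; it uses fractional tilings with $K_{f(H)}$-embeddable structures in the reduced multigraph (Lemma~\ref{lem4.4}) and an embedding lemma (Corollary~\ref{coro2.17}). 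Pushing this machinery down to $1-1/ar_{cr}(H)$ is exactly what is not known.

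Your stage~(i) is also schematic rather than a proof. The claim that $\mathrm{hcf}(H,\text{AP})=1$ yields the full lattice on the hyperplane is plausible in spirit, but the issue is not whether the lattice is full in the abstract; it is whether one can realise the needed robust vectors \emph{at the degree threshold} $(1-1/ar_{cr}(H)+\mu)n$. Your final paragraph correctly identifies this as the crux and leaves it unresolved. In short, the proposal outlines a reasonable programme but does not supply the new idea that the conjecture requires; the paper itself offers no proof because none is currently known.
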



For complete multi-partite graphs $H$, the smallest open case is $H=K_{3,3,4}$ where $f(H)=5$, $ar_{cr}(H)=\frac{20}{9}$ and ${\mathrm{hcf}}(H, \text{AP})=1$.

\subsection{Proof strategy}

Our proof makes use of the absorption method and builds on the techniques developed in \cite{MR3529107, MR3632565, MR3290271}. The absorption method was introduced by R\"{o}dl, Ruci\'{n}ski and Szemer\'{e}di about a decade ago in \cite{MR2500161}. Since then, it has turned out to be an important tool for studying the existence of spanning structures in graphs, digraphs and hypergraphs.

Now we sketch the proof idea. The main tasks are to (i) build an absorbing set and (ii) cover almost all of the remaining vertices with an $H$-tiling. For (i), we need the following notation of absorbers and absorbing sets in \cite{MR4080942}.
\begin{defn}
  Let $G$ be an $n$-vertex graph and $H$ be an $h$-vertex graph. Then
\begin{itemize}
  \item[(1)] a subset $A\subseteq V(G)$ is a $\xi$-\emph{absorbing set} in $G$ for some constant $\xi$ if for any subset $U\subseteq V(G)\backslash A$ of size at most $\xi n$ and $|A\cup U|\in h\mathbb{N}$, $G[A\cup U]$ contains an $H$-factor.
  \item[(2)] for any subset $S\in V(G)$ of size $h$ and an integer $t$, we call a subset $A_{S}\subseteq V(G)\backslash S$ an $(H, t)$-\emph{absorber} for $S$ if $|A_{S}|=ht$ and both $G[A_{S}]$ and $G[A_{S}\cup S]$ contain an $H$-factor.
\end{itemize}
\end{defn}
Widely used constructions of absorbing set by R\"{o}dl, Ruci\'{n}ski and Szemer\'{e}di \cite{MR2500161} and H\`{a}n, Person and Schacht \cite{MR2496914} rely on the property that
\begin{quote}
  \emph{every $h$-subset $S$ has $\Omega(n^{ht})$ $(H, t)$-absorbers for $S$.}
\end{quote}
However, as pointed out in \cite{MR3570984}, in our setting this is usually impossible because when we construct the absorbers using the independence number condition, it does not give such a strong counting. Instead, a new approach due to Nenadov and Pehova \cite{MR4080942} guarantees an absorbing set provided that
\begin{quote}
\emph{every $h$-set $S$ has $\Omega(n)$ vertex-disjoint $(H, t)$-absorbers for $S$.}
\end{quote}
However, it is still unclear how to verify this for our problem and the bulk of the work on building absorbing sets is to handle this. Here we instead build a partition $V(G)=B\cup U$ satisfying that
\begin{quote}
\emph{$|B|=o(n)$ and every $h$-set in $U$ has $\Omega(n)$ vertex-disjoint absorbers.}
\end{quote}
Similar ideas already appeared in our recent work, e.g.~\cite{CHKWY}. Then the arguments reduce to finding an absorbing set $A$ in $G[U]$ and then covering vertices of $B$ by a small $H$-tiling vertex disjoint from $A$ so as to yield a desired absorbing set.
Our proof for this is also significantly more involved than that in \cite{MR4193066} and uses the lattice-based absorption method by the second author \cite{MR3632565}.

Our second task is to establish an almost perfect $H$-tiling. Following a standard application of the regularity lemma, we obtain an $\eps$-regular partition and then build a reduced multigraph where two vertices are connected by a double-edge if the corresponding clusters form a regular pair of density larger than $\frac{1}{2}$.
Our proof uses a crucial notion of $K_{r}$-embeddable structures (see Definition~\ref{def2.7}) from the work of Knierim and Su \cite{MR4193066}, which was used to model all possible ways how $K_r$ is embedded into a collection of clusters.
To be more precise, a $K_{r}$-embeddable structure, say $\mathcal{K}$, is a clique in the reduced multigraph such that for $a,b\in \mathbb{N}$ with $a+b=|V(\mathcal{K})|$ and $a+2b=r$, $\mathcal{K}$ has $b$ vertices that are pairwise connected by multiple edges (if $b>1$).
Another key ingredient in their approach is that they build an almost perfect fractional tiling with $K_{r}$-embeddable structures via a novel idea (see Definition~\ref{def2.8} and Lemma~\ref{lem4.4}).
We adapt this approach to $H$-tilings which roughly boils down to embedding vertex-disjoint copies of $H$ in different $K_{r}$-embeddable structures where $r=f(H)$.
To apply Lemma~\ref{lem4.4} in our proof, it suffices to show that for every $K_{r}$-embeddable structure $\mathcal{K}$ with $a,b$ given as above and $V(\mathcal{K})=\{V_1,V_2,\ldots,V_{a+b}\}$, we can find \emph{an almost perfect $H$-tiling in an arbitrary collection of subclusters $V'_i\subseteq V_i$, where $2|V_i'|=|V_j'|$ for every $i\in [a],j\in[a+1,a+b]$.}

Unlike in \cite{MR4193066} where they embed a copy of $K_{r}$ each containing exactly one vertex in $V_i$ and one edge in $V_{j}$ for every $i\in [a],j\in[a+1,a+b]$, we instead need to embed an independent set in each $V_i$ and a forest in $V_j$ for every $i\in [a]$ and $j\in [a+1, a+b]$.
For this we shall use a result of Erd\H{o}s, Hajnal, S\'{o}s and Szemer\'{e}di \cite{{MR716422}} which allows us to embed two forests in a regular pair with density above $\frac{1}{2}$, one in each side, such that they are complete to each other.
Note that the $K_{r}$-embeddable structures may have different orders ranging from $\lceil\frac{r}{2}\rceil$ to $r$. For every $K_{r}$-embeddable structure $\mathcal{K}$ with integers $a,b$ as above, we define a suitably-chosen auxiliary graph $Q(a,b)$ which admits an acyclic partition $\{T_1,T_2,\ldots, T_{a+b}\}$ such that $T_i$ induces an independent set and $2|T_i|=|T_j|$\footnote{This is the origin of the family $\widetilde{H}$.} for every $i\in [a],j\in[a+1,a+b]$. In particular, $Q(a,b)$ has an $H$-factor, and thus it suffices to find an almost perfect $Q(a,b)$-tilings in the same context (see Lemma~\ref{lem2.16} and Corollary~\ref{coro2.17}).

\subsection{Basic notation and organization}
We first introduce some notation throughout the paper. For a graph $G:= G(V, E)$, we write $e(G)=|E(G)|$. We say an independent set $I$ is an $\ell$-\emph{independent set} if $|I|=\ell$. Similarly, we can define an $\ell$-\emph{tree} and an $\ell$-\emph{forest}. The \emph{girth} $g(G)$ of a graph $G$ is the length of a shortest cycle. For $U\subseteq V$, let $G[U]$ be the induced subgraph of $G$ on $U$. Let $G-U:=G[V\backslash U]$. For two subsets $A, B\subseteq V(G)$, we use $E(A, B)$ to denote the set of edges joining $A$ and $B$. We write $N_{G}(v)$ for the set of neighbors of $v$ in $G$. For convenience, we use $d_{G}(v)$ to denote the number of edges which contain $v$ in $G$. We omit the subscript $G$ if the graph is clear from the context. For $m\in \mathbb{N}$, we write $[V]^{\geq m}$ for the family of all subsets $U\subseteq V$ with $|U|\geq m$. For a vertex set $W$ and a positive integer $\ell$, we write $\tbinom{W}{\ell}$ to denote the set of all $\ell$-\emph{subsets} of distinct elements of $W$. We write $V(G)=V_{1}\cup V_{2}$ for a \emph{bipartition} of $G$ if $G[V_{i}]$ is an independent set for each $i\in [2]$. For any integers $a\leq b$, let $[a, b]:=\{i\in \mathbb{Z}: a\leq i\leq b\}$ and $[a]:= [1, a]$.

When we write $\beta\ll \gamma$, we always mean that $\beta, \gamma$ are constants in $(0, 1)$, and $\beta\ll \gamma$ means that there exists $\beta_{0}=\beta_{0}(\gamma)$ such that the subsequent arguments hold for all $0<\beta\leq \beta_{0}$. Hierarchies of other lengths are defined analogously.

The rest of the paper is organized as follows. In Section \ref{sec555}, we will prove Proposition \ref{prop5.111} and Proposition \ref{prop1.8}. In Section \ref{sec2}, we will give a proof of our main result and introduce the regularity lemma. In Section \ref{sec3}, our main work is to find a fractional tiling and establish an embedding lemma. We will prove Lemma \ref{lem3.1} in Section \ref{sec4}. In Section \ref{sec5}, we present some necessary results and tools to introduce the latticed-based absorbing method and prove Lemma \ref{lem3.3}.

\section{Proofs of Proposition \ref{prop5.111} and Proposition \ref{prop1.8}}\label{sec555}
We first give a result which is commonly used in the proof of Proposition \ref{prop5.111} and Proposition \ref{prop1.8}.
%
\begin{lemma}[\cite{ER1959}]\label{lem6.1}
For any $\alpha>0$, $k\in \mathbb{N}$ with $k\geq 3$, there exists an $n$-vertex graph $G$ for sufficiently large $n$ such that $\alpha(G)<\alpha n$ and $g(G)>k$.
\end{lemma}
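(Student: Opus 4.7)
The plan is to prove this classical result of Erd\H{o}s by the probabilistic method. I would set $N := 2n$, fix an exponent $\theta$ with $0 < \theta < 1/k$ (for instance $\theta := 1/(2k)$), and consider the random graph $G \sim G(N, p)$ with edge probability $p := N^{\theta - 1}$. The aim is to show that with positive probability $G$ simultaneously has few cycles of length at most $k$ and no independent set of size $\alpha n$, and then to destroy the short cycles by deleting one vertex from each.

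For the short cycles, the expected number of cycles of a fixed length $\ell \in [3, k]$ is at most $\binom{N}{\ell}\frac{(\ell-1)!}{2}p^{\ell} \leq N^{\ell\theta}$, so the expected total number of cycles of length at most $k$ is $O(N^{k\theta}) = o(N)$, and by Markov's inequality this total is at most $N/4$ with probability at least $1/2$. For the independence number, I would set $s := \lceil \alpha N / 2 \rceil$ and apply the union bound
\[
\Pr\bigl[\alpha(G) \geq s\bigr] \leq \binom{N}{s}(1-p)^{\binom{s}{2}} \leq \exp\!\bigl(s \ln(eN/s) - p\tbinom{s}{2}\bigr).
\]
Here $s \ln(eN/s) = O(N)$ while $p\binom{s}{2} = \Theta(N^{1+\theta})$, and since $\theta > 0$ the negative term dominates, so this probability tends to $0$ as $N \to \infty$.

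Consequently, for all sufficiently large $N$ there exists a realisation $G^{*}$ on $N = 2n$ vertices with at most $N/4$ cycles of length at most $k$ and with $\alpha(G^{*}) < \alpha N / 2 = \alpha n$. Deleting one vertex from each short cycle yields a graph of girth greater than $k$ on at least $3N/4 \geq n$ vertices, and passing to any $n$-vertex induced subgraph preserves both $g > k$ and $\alpha(\cdot) < \alpha n$, giving the desired $G$. I do not foresee a serious obstacle: the only delicate point is balancing $p$ so that short cycles remain rare while linear-sized independent sets are ruled out, and the choice $p = N^{\theta-1}$ with $0 < \theta < 1/k$ achieves exactly this trade-off.
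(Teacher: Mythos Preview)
Your argument is correct: this is exactly the classical Erd\H{o}s probabilistic proof, and every step checks out (the expected short-cycle count is $O(N^{k\theta})=o(N)$ since $k\theta<1$, the union bound for $\alpha(G)\ge s$ goes to zero because $p\binom{s}{2}=\Theta(N^{1+\theta})$ dominates $s\ln(eN/s)=O(N)$, and the deletion step preserves both properties). The paper itself does not supply a proof of this lemma at all; it simply cites the original Erd\H{o}s reference and uses the statement as a black box, so there is nothing further to compare.
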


Now we give the proof of Proposition \ref{prop5.111}.
\begin{proof} [Proof of Proposition \ref{prop5.111}]
Given $\alpha>0$ and $h\in \mathbb{N}$, we shall choose $\frac{1}{n}\ll \alpha$.
Let $\ell:=ar(H)$. We divide the proof into following three cases: $\ell=1$, $\ell=2$ and $\ell\geq 3$.

We first give a construction for every graph $H$ with $\mathrm{hcf}_{2}(H, \text{AP})\geq 2$, which will commonly be used when $\ell=1, 2$. Note that there exists $p\in \{\lfloor\frac{n}{2}\rfloor, \lfloor\frac{n}{2}\rfloor+1\}$ that is non-divisible by $\mathrm{hcf}_{2}(H, \text{AP})$.
Let $G_{0}$ be an $n$-vertex graph with $V(G_{0})=V_{1}\cup V_{2}$ such that $|V_{1}|=p$ and $|V_{2}|=n-p$, and $G_{0}[V_{i}]$ be a complete graph for each $i\in[2]$.
It holds that $\delta(G_{0})\geq \frac{n}{2}-2$ and $\alpha(G_{0})=2$.
Let $H'$ be a copy of $H$ in $G_{0}$.
Since $G_{0}$ is disconnected, $|V(H')\cap V_{1}| \equiv 0 \pmod{\mathrm{hcf}_{2}(H, \text{AP})}$.
Thus, for every $H$-tiling $\mathcal{H}$ in $G_0$, $|\bigcup_{H'\in \mathcal{H}}(V(H')\cap V_{1})| \equiv 0 \pmod{\mathrm{hcf}_{2}(H, \text{AP})}$.
Note that $|V_{1}|=p$ is non-divisible by $\mathrm{hcf}_{2}(H, \text{AP})$.
Hence, $\mathcal{H}$ is not an $H$-factor in $G_0$.

If $\ell=1$, then by the assumption that $\mathrm{hcf}(H, \text{AP})\neq 1$, it holds that $\mathrm{hcf}_{2}(H, \text{AP})\geq 2$. We are done by the construction $G_0$ as above.



If $\ell=2$, then by the assumption that $\mathrm{hcf}(H, \text{AP})\neq 1$, we may assume that $\mathrm{hcf}_{2}(H, \text{AP})=1$ and $\mathrm{hcf}_{1}(H, \text{AP})\geq 3$ as the case $\mathrm{hcf}_{2}(H, \text{AP})\geq 2$ would be handled by $G_0$.
Let $G$ be an $n$-vertex graph with $V(G)=V_{1}\cup V_{2}$, $|V_{1}|=\lfloor\frac{n}{2}\rfloor+1$, $|V_{2}|=\lceil\frac{n}{2}\rceil-1$ and $G[V_{1}, V_{2}]$ be a complete bipartite graph.
Let $G[V_{i}]$ be a subgraph with $\alpha(G[V_{i}])\leq \alpha n$ and $g(G[V_{i}])\geq h+1$ given by Lemma \ref{lem6.1} for each $i\in [2]$.
It holds that $\delta(G)\geq \frac{n}{2}-1$ and $\alpha(G)\leq \alpha n$.
Let $H'$ be a copy of $H$ in $G$.
Then $g(G[V_{i}])\geq h+1$ implies that $V(H')\cap V_{i}$ induces a forest in $H'$ for every $i\in[2]$.
Hence, $\mathcal{P}:=\{V(H')\cap V_{1}, V(H')\cap V_{2}\}$ is an acyclic partition of $H'$ and thus $|(V(H')\cap V_{1})|-|(V(H')\cap V_{2})| \equiv 0 \pmod{\mathrm{hcf}_{1}(H, \text{AP})}$.
For every $H$-tiling $\mathcal{H}$ in $G$, it holds that $|\bigcup_{H'\in \mathcal{H}}(V(H')\cap V_{1})|-|\bigcup_{H'\in \mathcal{H}}(V(H')\cap V_{2})| \equiv 0 \pmod{\mathrm{hcf}_{1}(H, \text{AP})}$.
Note that $\mathrm{hcf}_{1}(H, \text{AP})\geq 3$ and $|V_{1}|-|V_{2}|\in \{1, 2\}$.
Hence, $\mathcal{H}$ is not an $H$-factor in $G$.

If $\ell\geq 3$, then by the assumption that $\mathrm{hcf}(H, \text{AP})\neq 1$, it holds that $\mathrm{hcf}_{1}(H, \text{AP})\neq1$. Let $G$ be an $n$-vertex graph with $V(G)=V_{1}\cup \cdots\cup V_{\ell}$, $|V_{1}|=\lfloor\frac{n}{\ell}\rfloor+1$, $|V_{2}|=\lfloor\frac{n}{\ell}\rfloor$, $\lfloor\frac{n}{\ell}\rfloor-1\leq |V_{i}|\leq \lceil\frac{n}{\ell}\rceil$ for each $i\in [3, \ell]$ and $G[V_{i}, V_{j}]$ be a complete bipartite graph for distinct $i, j\in [\ell]$.
Let $G[V_{i}]$ be a subgraph with $\alpha(G[V_{i}])\leq \alpha n$ and $g(G[V_{i}])\geq h+1$ given by Lemma \ref{lem6.1} for each $i\in [\ell]$.
It holds that $\delta(G)\geq n-(\lfloor\frac{n}{\ell}\rfloor+1)\geq (1-\frac{1}{\ell})n-1$ and $\alpha(G)\leq \alpha n$.
Let $H'$ be a copy of $H$ in $G$.
Then by the same arguments in the case above, it holds that for every $H$-tiling $\mathcal{H}$ in $G$, $|\bigcup_{H'\in \mathcal{H}}(V(H')\cap V_{1})|-|\bigcup_{H'\in \mathcal{H}}(V(H')\cap V_{2})| \equiv 0 \pmod{\mathrm{hcf}_{1}(H, \text{AP})}$.
Note that $\mathrm{hcf}_{1}(H, \text{AP})\neq1$ and $|V_{1}|-|V_{2}|=1$.
Hence, $\mathcal{H}$ is not an $H$-factor in $G$.
\end{proof}

Next we prove Proposition \ref{prop1.8}.

\begin{proof} [Proof of Proposition \ref{prop1.8}]
Given $\alpha>0$ and $h\in \mathbb{N}$, we shall choose $\frac{1}{n}\ll \alpha$.
Let $H$ be an $h$-vertex graph and $\ell:=ar(H)$, $G$ be an $n$-vertex graph with $V(G)=V_{1}\cup \cdots \cup V_{\ell}$, $|V_{1}|=\frac{\sigma(H)}{h}n-1$, $|V_{2}|=\big\lceil\frac{h-\sigma(H)}{(\ell-1)h}n\big\rceil+1$, $\big\lfloor\frac{h-\sigma(H)}{(\ell-1)h}n\big\rfloor\leq |V_{i}|\leq \big\lceil\frac{h-\sigma(H)}{(\ell-1)h}n\big\rceil$ for each $i\in [3, \ell]$ and $G[V_{i}, V_{j}]$ be a complete bipartite graph for distinct $i, j\in [\ell]$.
Let $G[V_{i}]$ be a subgraph with $\alpha(G[V_{i}])\leq \alpha n$ and $g(G[V_{i}])\geq h+1$ given by Lemma \ref{lem6.1} for each $i\in [\ell]$. It holds that $\delta(G)\geq n-\left(\big\lceil\frac{h-\sigma(H)}{(\ell-1)h}n\big\rceil+1\right)\geq\left(1-\frac{1}{ar_{cr}(H)}\right)n-1$ and $\alpha(G)\leq \alpha n$. Let $H'$ be a copy of $H$ in $G$. Then $g(G[V_{i}])\geq h+1$ implies that $H'[V(H')\cap V_{i}]$ is a forest for each $i\in[\ell]$, and $\mathcal{P}:=\{V(H')\cap V_{1}, \dots, V(H')\cap V_{\ell}\}$ is an acyclic partition of $H'$. By the definition of $\sigma(H)$, $|V(H')\cap V_{1}|\geq \sigma(H)$. Since $|V_{1}|=\frac{\sigma(H)}{h}n-1$, there are at most $\big\lfloor\frac{|V_{1}|}{\sigma(H)}\big\rfloor\leq\frac{n}{h}-1$ vertex-disjoint copies of $H$ in $G$. Hence, $G$ has no $H$-factor.
\end{proof}

In the next section, we prove Theorem \ref{thm1.3}.

\section{Proof of Theorem \ref{thm1.3}}\label{sec2}

\subsection{Proof of the main result}\label{sec3}
In this subsection, we introduce the central lemmas that are needed for the proof of our main theorem. This subsection is devoted to explaining how they work together to give the proof of Theorem \ref{thm1.3}. The proofs of these lemmas are then presented in full details in Section \ref{sec3} and Section \ref{sec5} respectively.

A crucial and necessary step in our proof is to find an $H$-tiling in the graph $G$ which covers all but a small set of vertices. The following result guarantees the existence of such an $H$-tiling. The proof of Lemma \ref{lem3.1} will be presented in Section \ref{sec3}.

\begin{lemma}[]\label{lem3.1}
Given $\mu, \delta>0$, an $h$-vertex graph $H$ with $h\in \mathbb{N}$ and $h\geq 3$, there exists $\alpha>0$ such that the following holds for sufficiently large $n$. Let $G$ be an $n$-vertex graph with $\delta(G)\geq \max\left\{\left(1-\frac{2}{f(H)}+\mu\right)n, \left(\frac{1}{2}+\mu\right)n\right\}$ and $\alpha(G)\leq \alpha n$. Then $G$ contains an $H$-tiling which covers all but at most $\delta n$ vertices.
\end{lemma}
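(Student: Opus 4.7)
The plan is to follow the regularity-method blueprint sketched in the proof strategy. First, I would apply Szemer\'edi's regularity lemma to $G$ with parameters $\eps\ll d\ll \mu,\delta,1/h$, obtaining a vertex partition $V_0\cup V_1\cup\cdots\cup V_t$ with an exceptional set $V_0$ of size at most $\eps n$ and equal-sized clusters. Build the reduced multigraph $R$ on $\{V_1,\dots,V_t\}$ by placing a single edge on every $\eps$-regular pair of density at least $d$, and a second parallel edge if the density exceeds $1/2$. A routine counting argument transfers the degree hypothesis of $G$ to $R$: each cluster $V_i$ satisfies $d_R^{(1)}(V_i)+2d_R^{(2)}(V_i) \geq (1-\tfrac{2}{f(H)}+\tfrac{\mu}{2})\cdot 2t$ and $d_R^{(1)}(V_i)+d_R^{(2)}(V_i)\geq (\tfrac12+\tfrac{\mu}{2})t$, where $d^{(1)},d^{(2)}$ count single-edge and double-edge neighbours respectively. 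The sublinear independence of $G$ passes to each cluster: $\alpha(G[V_i])=o(|V_i|)$.

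The second step is to invoke the fractional tiling lemma (Lemma~\ref{lem4.4}) with $r:=f(H)$: the degree condition on $R$ is exactly what is needed to produce an almost-perfect fractional tiling of $R$ by $K_r$-embeddable structures, i.e.\ cliques $\mathcal K$ of order $a+b$ in $R$ (with $a+2b=r$) whose $b$ ``double'' vertices are pairwise joined by multi-edges. By a standard weight-redistribution argument (splitting clusters according to the weights of the fractional tiling), this converts into an almost perfect vertex-disjoint collection of $K_r$-embeddable structures on subclusters $V_i'\subseteq V_i$, where within each structure the $a$ single-type subclusters and $b$ double-type subclusters satisfy the size ratio $2|V_i'|=|V_j'|$ for $i\in[a]$, $j\in[a+1,a+b]$.

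The third step is the embedding lemma: inside a single $K_r$-embeddable structure with clusters obeying the above size ratio, I would build an almost perfect $H$-tiling. This is where the auxiliary graph $Q(a,b)$ enters: $Q(a,b)$ has an acyclic partition $\{T_1,\dots,T_{a+b}\}$ in which each $T_i$ is independent, $2|T_i|=|T_j|$ for $i\in[a], j\in[a+1,a+b]$, and $Q(a,b)$ contains a copy of $H$ (this is exactly the mechanism justifying the definition of $f(H)$ and the family $\widetilde{\mathcal H}$). Hence it suffices to produce an almost perfect $Q(a,b)$-tiling (Lemma~\ref{lem2.16}, Corollary~\ref{coro2.17}). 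The independent sets $T_i$ ($i\in[a]$) are found using $\alpha(G[V_i'])=o(|V_i'|)$, the forests on $T_j$ ($j\in[a+1,a+b]$) are embedded in double-edge regular pairs of density $>1/2$ using the Erd\H{o}s--Hajnal--S\'os--Szemer\'edi theorem, and completeness across different parts is ensured by the regularity of the single-edge pairs (a greedy round-by-round embedding, updating typical neighbourhoods after each copy, handles polynomially many vertices without degrading the regularity).

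The main obstacle is the third step: producing the almost perfect $H$-tiling inside a $K_r$-embeddable structure while honouring the $1{:}2$ size ratio. The subtlety is that different $K_r$-embeddable structures may have different shapes $(a,b)$ with $a+2b=r$, so a uniform argument must handle the full range $\lceil r/2\rceil\le a+b\le r$; moreover, when $H\in\widetilde{\mathcal H}$ and $f(H)=2ar(H)-1$ is odd, at least one single-type cluster is forced and the independent set embedded there must not use up too much of the sublinear independence budget. Once the embedding lemma is secured, the overall $H$-tiling is simply the union of these per-structure tilings, and the uncovered vertices — those in $V_0$, those left over by the fractional tiling's slack, and those lost inside each structure's almost-perfect tiling — amount to at most $\delta n$ provided $\eps$ and the fractional tiling error were chosen small enough in the hierarchy.
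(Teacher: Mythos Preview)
Your proposal is correct and follows essentially the same route as the paper: regularity, then the fractional $\mathcal{F}(R,f(H))$-tiling of Lemma~\ref{lem4.4}, then greedy $Q(a,b)$-embedding via Corollary~\ref{coro2.17} inside each $K_{f(H)}$-embeddable structure, combined with Lemma~\ref{lem2.16} (which gives an $H$-\emph{factor} in $Q(a,b)$, not just a single copy) to convert the $Q(a,b)$-tiling into an $H$-tiling. One small slip: the sublinear independence number is what lets you find the \emph{forests} in the double-type clusters (few independent vertices $\Rightarrow$ many edges $\Rightarrow$ large chromatic number $\Rightarrow$ every small tree embeds, cf.\ Corollary~\ref{coro2.10} and Proposition~\ref{prop2.12}), whereas the bounded-size independent sets in the single-type clusters are obtained purely from $\eps$-regularity by iterating on common neighbourhoods.
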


Lemma \ref{lem3.3} provides an absorbing set in the graph $G$, whose proof can be found in Section \ref{sec5}.
\begin{lemma}[]\label{lem3.3}
Given $\mu, \gamma$ with $0<\gamma\leq \frac{\mu}{2}$, an $h$-vertex graph $H$ with $h\in \mathbb{N}$ and $h\geq 3$, there exist $\alpha, \xi>0$ such that the following holds for sufficiently large $n$. Let $G$ be an $n$-vertex graph with $\delta(G)\geq \max\left\{\left(1-\frac{2}{f(H)}+\mu\right)n, \left(\frac{1}{2}+\mu\right)n\right\}$ and $\alpha(G)\leq \alpha n$. Then $G$ contains a $\xi$-absorbing set $A$ of size at most $\gamma n$.
\end{lemma}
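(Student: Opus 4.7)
The plan is to adapt the lattice-based absorbing method of the second author~\cite{MR3632565}, following the strategy outlined in the proof-strategy subsection: first establish a ``closed'' reachability structure on most of $V(G)$, then realise absorbers for arbitrary $h$-sets via reachability transfers, and finally extract the absorbing set via a Nenadov--Pehova style random sampling~\cite{MR4080942}.

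\textbf{Stage 1 (Reachability).} Call two vertices $u, v \in V(G)$ $(\beta, t)$-reachable if there exist at least $\beta n^{ht-1}$ sets $W \subseteq V(G) \setminus \{u,v\}$ of size $ht - 1$ such that both $G[W \cup \{u\}]$ and $G[W \cup \{v\}]$ admit an $H$-factor. The first task is to partition $V(G) = B \cup U$ with $|B| = o(n)$ such that every pair in $U$ is $(\beta, t)$-reachable for some fixed $\beta > 0$ and bounded $t$. Here $\delta(G) \ge (1/2 + \mu)n$ ensures that any pair of vertices has $\Omega(n)$ common neighbours, while the stronger condition $\delta(G)\ge(1-2/f(H)+\mu)n$ combined with the $K_{f(H)}$-embeddability framework from Section~\ref{sec3} (via the auxiliary graph $Q(a,b)$) produces many copies of $H$ in which a designated vertex can be swapped freely with $u$ or with $v$. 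Vertices that fail to admit such structures form the sublinear ``bad'' set $B$, whose smallness is verified using $\alpha(G) \le \alpha n$.

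\textbf{Stage 2 (Absorbers for $h$-sets).} Once $U$ is $(\beta, t)$-closed, I construct $\Omega(n)$ vertex-disjoint $(H, t')$-absorbers for each $h$-set $S \subseteq U$, for some bounded $t'$. Starting from an arbitrary copy of $H$ in $G[U]$ (plentiful by Lemma~\ref{lem3.1} applied locally), I swap each of its vertices with an element of $S$ by appending pairwise-disjoint reachability gadgets from Stage 1. Each swap consumes only $O(1)$ additional vertices, and the $\Omega(n^{ht-1})$ reachability gadgets available per pair then yield the required $\Omega(n)$ vertex-disjoint absorbers via greedy selection.

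\textbf{Stage 3 (Random sampling and clean-up).} Sample a random subset $A_0 \subseteq U$ of size $\gamma n / 2$; standard concentration shows that, with positive probability, every $h$-set in $U$ has $\Omega(1)$ vertex-disjoint absorbers entirely inside $A_0$ while the number of ``bad'' copies of $H$ overlapping $A_0$ in at least two vertices is $o(n)$. A greedy cleaning then yields a $\xi_0$-absorbing set $A_U \subseteq U$ for $G[U]$ of size at most $\gamma n / 2$. Since $|B| = o(n)$ and the degree condition is preserved in $G - A_U$, Lemma~\ref{lem3.1} (applied to a subgraph containing $B$ together with a small reservoir chosen in $U \setminus A_U$) produces an $H$-tiling $\mathcal{T}$ covering every vertex of $B$. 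Then $A := A_U \cup V(\mathcal{T})$ is the desired $\xi$-absorbing set of size at most $\gamma n$.

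The principal obstacle is Stage 1. Unlike in the clique setting of~\cite{MR4193066}, the reachability gadget must carry a copy of $H$ rather than $K_r$, so the lattice of realisable transfers is governed by acyclic partitions of $H$ rather than its colour classes; this is precisely where the parameter $f(H)$ enters. The full strength of $\delta(G) \ge (1 - 2/f(H) + \mu)n$, combined with the Erd\H{o}s--Hajnal--S\'os--Szemer\'edi forest-embedding in dense regular pairs~\cite{MR716422} supplied via the $Q(a,b)$ auxiliary graphs, is needed to guarantee that sufficiently many structurally distinct copies of $H$ remain available after the swap, so that the reachability lattice on $U$ has the correct rank and the cleaning bound $|B| = o(n)$ can actually be established.
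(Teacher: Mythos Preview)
Your overall architecture matches the paper's, but Stage~1 as written contains a genuine gap. You define $(\beta,t)$-reachability by requiring $\beta n^{ht-1}$ connector sets, i.e.\ the classical polynomial-counting version used in \cite{MR2500161,MR2496914}. In the present setting this count is \emph{not} available: the copies of $H$ you need inside a connector are produced by the embedding machinery (Corollary~\ref{coro2.17}/Lemma~\ref{lem2.17}), and that machinery invokes the sublinear independence number only to guarantee that \emph{some} forest of prescribed shape sits inside a given cluster---it does not, and cannot, certify $\Omega(n^{|F|})$ labelled copies of that forest. The paper flags exactly this obstruction in the proof-strategy discussion and therefore adopts the weaker \emph{robust} notion of reachability (Definition after Lemma~\ref{lem5.1}): $u,v$ are $(H,m,t)$-reachable if for every forbidden set $W$ of size $m$ one connector survives in $V(G)\setminus W$. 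That robustness is precisely what Corollary~\ref{coro2.17} delivers after deleting $\beta' n$ vertices from each cluster, and it is enough to feed into Lemma~\ref{lem5.2} and then Lemma~\ref{lem5.1}. Your Stage~2 greedy argument would in fact go through with the robust notion, but Stage~1 as stated cannot be established.

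A secondary issue is your Stage~3 clean-up of $B$. You invoke Lemma~\ref{lem3.1} on a subgraph containing $B$ plus a reservoir, but that lemma only promises to leave at most $\delta n$ vertices uncovered, and nothing prevents those leftover vertices from being exactly the vertices of $B$ (which has size $o(n)\le \delta n$). The paper instead proves a pointwise statement (Lemma~\ref{lem5.7}): every vertex lies in a copy of $H$ even after deleting any $\tfrac{\mu}{2}n$ vertices. This allows one to cover the vertices of $B$ one at a time by a greedy process disjoint from $A_U$, which is what the proof of Lemma~\ref{lem3.3} actually does.
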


Now we give the proof of Theorem \ref{thm1.3} using Lemma \ref{lem3.1} and Lemma \ref{lem3.3}.

\begin{proof} [Proof of Theorem \ref{thm1.3}] Given $\mu>0$, $h\in\mathbb{N}$ with $h\geq 3$ and an $h$-vertex graph $H$, we shall choose

\begin{center}
$\frac{1}{n}\ll \alpha\ll \delta\ll \xi\ll\gamma\ll \mu$.
\end{center}
Let $G$ be an $n$-vertex graph with $\delta(G)\geq \max\left\{\left(1-\frac{2}{f(H)}+\mu\right)n, \left(\frac{1}{2}+\mu\right)n\right\}$ and $\alpha(G)\leq \alpha n$. By Lemma \ref{lem3.3} with $\gamma\leq \frac{\mu}{2}$, we find a $\xi$-absorbing set $A\subseteq V(G)$ of size at most $\gamma n$ for some $\xi>0$. Let $G_{1}:= G-A$. Then we have
\begin{center}
$\delta(G_{1})\geq \max\left\{\left(1-\frac{2}{f(H)}+\mu\right)n, \left(\frac{1}{2}+\mu\right)n\right\}-\gamma n\geq \max\left\{\left(1-\frac{2}{f(H)}+\frac{\mu}{2}\right)n, \left(\frac{1}{2}+\frac{\mu}{2}\right)n\right\}.$
\end{center}
Therefore by applying Lemma \ref{lem3.1} on $G_{1}$ with $\delta$, we obtain an $H$-tiling $\mathcal{H}$ that covers all but a set $L$ of at most $\delta n$ vertices in $G_{1}$. Since $\delta\ll \xi$, the absorbing property of $A$ implies that $G[A\cup L]$ contains an $H$-factor, which together with $\mathcal{H}$ forms an $H$-factor in $G$.
\end{proof}

\subsection{Regularity}
To find an almost perfect tiling, an important ingredient in our proof is Szemer\'{e}di's Regularity Lemma. In this paper, we make use of a degree form of the regularity lemma \cite{MR1395865}.
We shall first introduce some notation.
Given a graph $G$ and a pair $(V_{1}, V_{2})$ of vertex-disjoint subsets in $V(G)$, the \emph{density} of $(V_{1}, V_{2})$ is defined as

\begin{center}
$d(V_{1}, V_{2})=\frac{e(V_{1}, V_{2})}{|V_{1}||V_{2}|}$.
\end{center}

\begin{defn}[]\label{def2.1}
Given $\varepsilon>0$, a graph $G$ and a pair $(V_{1}, V_{2})$ of vertex-disjoint subsets in $V(G)$, we say that the pair $(V_1, V_2)$ is $\varepsilon $-\emph{regular} if for all $X\subseteq V_{1}$ and $Y\subseteq V_{2}$ satisfying

\[
X \subseteq V_{1}, |X| \ge \varepsilon |V_{1}| ~\text{and}~ Y \subseteq V_{2}, |Y| \ge \varepsilon |V_{2}|,
\]
we have
\[
|d(X,Y) - d(V_1,V_2)|  \le  \varepsilon.
\]
\end{defn}

\begin{lemma}[\cite{MR1395865}, Slicing Lemma]\label{lem2.2}
Assume $(V_{1}, V_{2})$ is $\varepsilon$-regular with density $d$. For some $\alpha\geq \varepsilon$, let $V_{1}'\subseteq V_{1}$ with $|V_{1}'|\geq \alpha|V_{1}|$ and $V_{2}'\subseteq V_{2}$ with $|V_{2}'|\geq \alpha|V_{2}|$. Then $(V_{1}', V_{2}')$ is $\varepsilon'$-regular with $\varepsilon':=\max\{2\varepsilon, \varepsilon/\alpha\}$ and for its density $d'$ we have $|d'-d|<\varepsilon$.
\end{lemma}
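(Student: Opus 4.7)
The plan is to deduce both conclusions directly from the definition of $\varepsilon$-regularity, using the hypothesis $|V_i'|\ge\alpha|V_i|$ with $\alpha\ge\varepsilon$ to ensure that any sufficiently large subset of $V_i'$ is also large enough inside $V_i$ to trigger the regularity condition of the original pair.

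First I would verify the density estimate. Since $|V_1'|\ge\alpha|V_1|\ge\varepsilon|V_1|$ and $|V_2'|\ge\alpha|V_2|\ge\varepsilon|V_2|$, the sets $V_1',V_2'$ are themselves admissible test sets in the $\varepsilon$-regularity condition for $(V_1,V_2)$. Hence
\[
|d'-d| \;=\; |d(V_1',V_2')-d(V_1,V_2)| \;\le\; \varepsilon.
\]

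Next I would check the $\varepsilon'$-regularity of $(V_1',V_2')$. Take arbitrary $X\subseteq V_1'$ and $Y\subseteq V_2'$ with $|X|\ge\varepsilon'|V_1'|$ and $|Y|\ge\varepsilon'|V_2'|$. Since $\varepsilon'\ge\varepsilon/\alpha$ and $|V_i'|\ge\alpha|V_i|$, we obtain
\[
|X| \;\ge\; \varepsilon'\,|V_1'| \;\ge\; \frac{\varepsilon}{\alpha}\cdot\alpha|V_1| \;=\; \varepsilon|V_1|,
\]
and analogously $|Y|\ge\varepsilon|V_2|$. So $(X,Y)$ is an admissible pair for $\varepsilon$-regularity of $(V_1,V_2)$, giving $|d(X,Y)-d|\le\varepsilon$. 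Combining this with the first step via the triangle inequality,
\[
|d(X,Y)-d'| \;\le\; |d(X,Y)-d|+|d-d'| \;\le\; 2\varepsilon \;\le\; \varepsilon',
\]
where the last inequality uses $\varepsilon'\ge 2\varepsilon$. This is exactly the $\varepsilon'$-regularity condition for $(V_1',V_2')$.

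There is no real obstacle here: the argument is a routine application of the definition together with the triangle inequality, and the constant $\varepsilon'=\max\{2\varepsilon,\varepsilon/\alpha\}$ is chosen precisely so that both the ``rescaling'' step (dividing $\varepsilon$ by $\alpha$ to pull test sets back into $V_i$) and the ``additive'' step (absorbing a $2\varepsilon$ error from the triangle inequality) succeed simultaneously.
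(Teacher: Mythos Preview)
Your argument is correct and is exactly the standard textbook proof of the Slicing Lemma. The paper does not actually supply its own proof of this lemma; it is quoted from the Koml\'os--Simonovits survey \cite{MR1395865} and used as a black box, so there is nothing further to compare. One cosmetic point: from the definition of $\varepsilon$-regularity as stated in the paper (with a non-strict inequality) you only obtain $|d'-d|\le\varepsilon$, not the strict inequality $|d'-d|<\varepsilon$ written in the lemma's statement; this is a harmless discrepancy in how the lemma is quoted rather than a flaw in your reasoning.
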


\begin{lemma}[\cite{MR1395865}, Degree form of the Regularity Lemma]\label{lem2.3}

For every $\varepsilon  > 0$ there is an $N = N(\varepsilon )$ such that the following holds for any real number $\beta\in [0, 1]$ and $n\in \mathbb{N}$. Let $G$ be a graph with $n$ vertices.
Then there exist an $(\varepsilon,\beta)$-regular partition $V(G)=V_{0}\cup \cdots \cup V_{k} $ and a spanning subgraph $G' \subseteq G$ with the following properties:

         \item $({\rm 1})$ $ \frac{1}{\varepsilon}\leq k \le N $;

         \item $({\rm 2})$ $|V_{i}| \le \varepsilon  n$ for $i\in [0, k]$ and $|V_{1}|=|V_{2}|=\cdots=|V_{k}| =m$ for some $m\in \mathbb{N}$;

         \item $({\rm 3})$ $d_{G'}(v) > d_{G}(v) - (\beta + \varepsilon )n$ for all $v \in V(G)$;

         \item $({\rm 4})$ each $V_{i}$ is an independent set in $G' $ for $i\in [k]$;

         \item $({\rm 5})$ all pairs $(V_{i}, V_{j})$ are $\varepsilon $-regular (in $G'$) with density 0 or at least $\beta$ for distinct $i, j\neq0$.
\end{lemma}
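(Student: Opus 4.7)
The plan is to derive Lemma~\ref{lem2.3} from the classical Szemer\'edi Regularity Lemma by a cleaning-and-redistribution argument. First I would apply the original Regularity Lemma with an auxiliary parameter $\varepsilon' \ll \varepsilon$ (say $\varepsilon' \le \varepsilon^{2}/10$), producing an $\varepsilon'$-regular equitable partition $V_{0}' \cup V_{1}' \cup \cdots \cup V_{k'}'$ with $|V_{0}'| \le \varepsilon' n$, equal part sizes $m'$, at most $\varepsilon' \binom{k'}{2}$ irregular pairs, and $1/\varepsilon' \le k' \le N(\varepsilon')$.

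Next I would define $G' \subseteq G$ by deleting from $G$ all edges of four types: (i) those lying inside some $V_{i}'$ (securing property~(4)); (ii) those incident to $V_{0}'$; (iii) those sitting in $\varepsilon'$-irregular pairs; and (iv) those sitting in $\varepsilon'$-regular pairs of density less than $\beta$. Together, (iii) and (iv) secure property~(5). The delicate issue is type~(iii): a single vertex might lie in one irregular pair and already lose up to $m'$ neighbours there. I would therefore call a part $V_{i}'$ \emph{bad} if it belongs to more than $\sqrt{\varepsilon'}\, k'$ irregular pairs, and absorb every vertex of every bad part into $V_{0}$. A double count gives at most $2\sqrt{\varepsilon'}\, k'$ bad parts, hence $|V_{0}| \le \varepsilon' n + 2\sqrt{\varepsilon'}\, n \le \varepsilon n$ once $\varepsilon'$ is small enough, which yields properties~(1) and~(2) with $N := N(\varepsilon')$ and a reduced (but still equitable) collection of non-exceptional parts.

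For the per-vertex bound~(3), fix $v \in V(G)$ surviving in some good part $V_i$. The loss across the four deletions is then bounded as follows: type~(i) contributes at most $m' - 1 \le \varepsilon n$ (using $m' \le n/k' \le \varepsilon' n$); type~(ii) at most $|V_{0}| \le \varepsilon n$; type~(iii) at most $\sqrt{\varepsilon'}\, k' \cdot m' \le \sqrt{\varepsilon'}\, n$ by goodness of $V_i$; and type~(iv) at most $(k'-1)(\beta + \varepsilon')\, m' \le (\beta + \varepsilon')\, n$, since in an $\varepsilon'$-regular pair of density below $\beta$ each vertex has at most $(\beta + \varepsilon')\, m'$ neighbours on the other side. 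Summing and choosing $\varepsilon'$ small gives total loss at most $(\beta + \varepsilon) n$, which is exactly~(3).

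The main obstacle is therefore the handling of irregular pairs at the \textbf{per-vertex} level rather than the average level given by the original Regularity Lemma; this is what forces the redistribution of vertices from bad parts into $V_{0}$. Once this cleaning step is executed with the right hierarchy $\varepsilon' \ll \varepsilon$, the remaining properties follow immediately from the construction, and an analogous argument to the Slicing Lemma~\ref{lem2.2} shows that the surviving regular pairs retain densities in $\{0\} \cup [\beta, 1]$ up to a negligible shift absorbed into the choice of $\varepsilon'$.
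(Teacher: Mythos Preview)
The paper does not prove Lemma~\ref{lem2.3}; it is quoted from the Koml\'os--Simonovits survey \cite{MR1395865} and used as a black box. So there is no ``paper's proof'' to compare against, and your sketch is the standard route by which the degree form is usually derived from the original Regularity Lemma. The overall architecture (apply regularity with a much smaller $\varepsilon'$, absorb parts lying in too many irregular pairs into $V_0$, then delete the four edge types) is exactly right.

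There is, however, a genuine gap in your per-vertex accounting. Your bound for type~(iv) relies on the assertion that ``in an $\varepsilon'$-regular pair of density below $\beta$ each vertex has at most $(\beta+\varepsilon')m'$ neighbours on the other side.'' That is false: $\varepsilon'$-regularity only guarantees that \emph{all but at most $\varepsilon' m'$} vertices on one side satisfy this degree bound; individual vertices may have up to $m'$ neighbours across a sparse regular pair. A single high-degree vertex in an otherwise nearly empty graph already breaks your inequality and would violate~(3). The fix is an additional cleaning step: for $v\in V_i$, let $a(v)$ be the number of low-density regular partners $V_j$ for which $v$ is atypical (i.e.\ has more than $(\beta+\varepsilon')m'$ neighbours in $V_j$). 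Since each such pair contributes at most $\varepsilon'm'$ atypical vertices, $\sum_{v\in V_i}a(v)\le k'\varepsilon'm'$, and by Markov at most $(4\varepsilon'/\varepsilon)m'$ vertices of $V_i$ satisfy $a(v)>(\varepsilon/4)k'$. Move those vertices to $V_0$ (and trim the remaining parts to equal size); with $\varepsilon'\ll\varepsilon^2$ this adds only $o(\varepsilon)n$ to $|V_0|$, and for every surviving $v$ the type~(iv) loss is now at most $(\beta+\varepsilon')n+(\varepsilon/4)k'm'\le(\beta+\varepsilon/2)n$.

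A second, smaller issue: you delete all edges incident to $V_0'$ (your type~(ii)), but then only verify~(3) for vertices in good parts. For $v\in V_0$ this deletion could wipe out essentially all of $v$'s edges. The standard remedy is simply \emph{not} to delete edges touching $V_0$: properties~(4) and~(5) make no demands on $V_0$, so keeping those edges is free, and then~(3) holds trivially for $v\in V_0$.
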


A widely-used auxiliary graph accompanied with the regular partition is the reduced graph. To differentiate between dense and very dense pairs of partitions, we employ the following definitions of reduced multigraph.

\begin{defn}[Reduced graph]\label{def2.4}
Let $k\in \mathbb{N}$, $\beta, \varepsilon>0$, $G$ be a graph with a vertex partition $V(G)=V_0\cup \cdots \cup V_k$ and $G'\subseteq G$ be a subgraph fulfilling the properties of Lemma \ref{lem2.3}. We denote by $R_{\beta, \varepsilon}$ the \emph{reduced graph} for the $(\varepsilon,\beta)$-partition, which is defined as follows. Let $V(R_{\beta, \varepsilon})=\{V_{1}, \ldots, V_{k}\}$ and for two distinct clusters $V_{i}$ and $V_{j}$ we draw a double-edge between $V_{i}$ and $V_{j}$ if $d_{G'}(V_i, V_j)\geq \frac{1}{2}+\beta$, a single-edge if $\beta\leq d_{G'}(V_i, V_j)<\frac{1}{2}+\beta$ and no edge otherwise.
\end{defn}

The following fact presents a minimum degree of the reduced graph provided the minimum degree of $G$, where a double-edge is counted as two edges.
\begin{fac}[]\label{fact2.5}
Let $n, h\in \mathbb{N}$, $\mu>0$, $0<\varepsilon, \beta\leq \frac{\mu}{10}$ with $\beta\in [0, 1]$, $H$ be an $h$-vertex graph and $G$ be an $n$-vertex graph with $\delta(G)\geq\left(1-\frac{2}{f(H)}+\mu\right)n$. Let $V(G)=V_{0}\cup \cdots \cup V_{k}$ be a vertex partition of $V(G)$ satisfying Lemma \ref{lem2.3} $(1)$-$(5)$. We denote the reduced graph as $R_{\beta, \varepsilon}$. Then for every $V_{i}\in V(R_{\beta, \varepsilon})$ we have

\begin{center}
$d_{R_{\beta, \varepsilon}}(V_{i})\geq2\left(1-\frac{2}{f(H)}+\frac{\mu}{2}\right)k$.
\end{center}
\end{fac}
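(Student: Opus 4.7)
The plan is to translate the minimum-degree hypothesis on $G$ into a multi-degree lower bound on each cluster of $R_{\beta,\varepsilon}$ by a routine double-counting argument on the edges of $G'$ incident to $V_i$.

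Fix an arbitrary cluster $V_i$. Combining $\delta(G) \geq (1-\tfrac{2}{f(H)}+\mu)n$ with Lemma~\ref{lem2.3}(3), summing $d_{G'}(v)$ over $v \in V_i$ yields at least $\bigl(1-\tfrac{2}{f(H)}+\mu-\beta-\varepsilon\bigr)mn$. Since $V_i$ is independent in $G'$ by Lemma~\ref{lem2.3}(4), this same sum equals $e_{G'}(V_i,V_0) + \sum_{j \neq 0,i} e_{G'}(V_i,V_j)$, and the $V_0$-contribution is at most $m|V_0| \leq \varepsilon mn$.

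Next, I would split the non-exceptional clusters adjacent to $V_i$ in $R_{\beta,\varepsilon}$ into the double-edge neighbours $A_i$ (density $\geq 1/2+\beta$), the single-edge neighbours $B_i$ (density in $[\beta,\,1/2+\beta)$), and the non-neighbours (density $0$, contributing nothing to $G'$). Bounding each $e_{G'}(V_i,V_j)$ above by the product of the density ceiling of its class and $m^{2}$ would give
\[
  |A_i|\,m^{2}+\bigl(\tfrac{1}{2}+\beta\bigr)|B_i|\,m^{2}\;\geq\;\bigl(1-\tfrac{2}{f(H)}+\mu-\beta-2\varepsilon\bigr)\,mn.
\]

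To conclude, I would divide through by $m^{2}$, use $n/m \geq k$ (which follows from $n=|V_0|+km$), and multiply by $2$ to obtain $2|A_i|+(1+2\beta)|B_i| \geq 2\bigl(1-\tfrac{2}{f(H)}+\mu-\beta-2\varepsilon\bigr)k$. Absorbing the $2\beta|B_i|\leq 2\beta k$ error onto the right-hand side and invoking $\beta,\varepsilon \leq \mu/10$ then delivers $d_{R_{\beta,\varepsilon}}(V_i) = 2|A_i|+|B_i| \geq 2\bigl(1-\tfrac{2}{f(H)}+\mu/2\bigr)k$, as required. The argument is genuinely routine; the one point deserving care is that single-edges carry density up to $1/2+\beta$ rather than exactly $1/2$, which is precisely the source of the $\beta$-loss absorbed into the $\mu$-budget.
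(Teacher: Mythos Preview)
Your argument is correct and follows essentially the same double-counting route as the paper's proof. The only cosmetic difference is that the paper treats single and double edges uniformly via the single observation that each edge of $R_{\beta,\varepsilon}$ (counted with multiplicity) accounts for fewer than $(\tfrac{1}{2}+\beta)m^{2}$ edges of $G'-V_{0}$, whereas you split into $A_i$ and $B_i$ and handle the $2\beta|B_i|$ overshoot separately; the resulting numerics are equivalent.
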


\begin{proof} Note that $|V_{0}|\leq \varepsilon n$ and $|V_{i}|=m$ for each $i\in [k]$. Every edge in $R_{\beta, \varepsilon}$ represents less than $\left(\frac{1}{2}+\beta\right) m^{2}$ edges in $G'-V_{0}$. Thus we have
\begin{align*}
    d_{R_{\beta, \varepsilon}}(V_{i}) & \geq \frac{|V_{i}|\left(\delta(G)-(\beta+\varepsilon)n-\varepsilon n\right)}{\left(\frac{1}{2}+\beta\right) m^{2}} \\
    & \geq \frac{\left(1-\frac{2}{f(H)}+\mu-2\varepsilon-\beta\right)mn}{\left(\frac{1}{2}+\beta\right) m^{2}} \\
    & \geq 2\left(1-\frac{2}{f(H)}+\mu-2\varepsilon-\beta\right)(1-2\beta)k\\
    & > 2\left(1-\frac{2}{f(H)}+\frac{\mu}{2}\right)k,
\end{align*}
since $0<\varepsilon, \beta\leq \frac{\mu}{10}$ and $\left(\frac{1}{2}+\beta\right)^{-1}\geq 2(1-2\beta)$.
\end{proof}

\begin{rmk}\label{remk2.6}
Let $R$ be a multigraph with multiplicity 2. Note that $|N_{R}(i)|\geq \frac{1}{2}d_{R}(i)$ for each $i\in V(R)$. The \emph{double-edge neighborhood} of $i\in V(R)$ is a set of vertices in $V(R)$ which are connected to $i$ through double-edges. Similarly, we define the \emph{single-edge neighborhood}.
\end{rmk}

\section{Almost perfect tilings}\label{sec3}
To obtain an almost perfect $H$-tiling in the graph $G$, we first define a family $\mathcal{Q}$ of suitably-chosen auxiliary graphs $Q(a,b)$ (to be defined later) for $a, b\in \mathbb{N}$ such that $a+2b=f(H)$ and $Q(a,b)$ contains an $H$-factor. This roughly reduces the problem to finding in $G$ a collection of vertex-disjoint copies of members from $\mathcal{Q}$ which altogether cover almost all vertices.
Here our proof adopts a standard application the regularity lemma on $G$ to get a reduced graph $R$. A key step in it is to construct certain structures in $R$ for embedding $Q(a,b)$. In this case, we use an idea from the work of Knierim and Su \cite{MR4193066} to find a fractional tiling with $K_{f(H)}$-embeddable structures (see Definition~\ref{def2.7}); and then develop a tool (see Corollary~\ref{coro2.17}) for embedding $Q(a,b)$ under certain pseudorandomness conditions.

\subsection{Fractional tilings}
The main result in this subsection is Lemma \ref{lem4.4} which provides us a fractional tiling with some special structures in the reduced graph.
Here, we first present some related notation about these special structures. They are formalised as follows.

\begin{defn}[\cite{MR4193066}, Definition 2.6]\label{def2.7}
Let $R$ be a multigraph with multiplicity 2. Then a $K_{r}$-\emph{multi-embedding} to $R$ is a mapping $\phi: V(K_{r})\rightarrow V(R)$ with the following properties:
\begin{itemize}
  \item for any $i\in V(R)$ the induced subgraph on the vertex set $\phi^{-1}(i)$ (if not empty) in $K_{r}$ is either an isolated vertex or an edge (in particular, $|\phi^{-1}(i)|\le 2$);
  \item if $uv\in E(K_{r})$, then $\phi(u)$ and $\phi(v)$ are connected by at least one edge in $R$ (as long as $\phi(u)$ and $\phi(v)$ differ);
  \item if $|\phi^{-1}(i)|=|\phi^{-1}(j)|=2$ for distinct $i, j\in V(R)$, then $i$ and $j$ are connected by a double-edge.
\end{itemize}
\end{defn}

We also need some definitions which are related to the $K_{r}$-multi-embedding. If $\phi$ is a $K_{r}$-multi-embedding to $R$, then the corresponding subgraph $R[\phi(K_{r})]=:\mathcal{K}$ is a \emph{$K_{r}$-embeddable structure} in $R$. We write $i_{\mathcal{K}}(v)=|\phi^{-1}(v)|$ for every $v\in V(R)$ and by Definition~\ref{def2.7} we know that $i_{\mathcal{K}}(v)\in\{0,1,2\}$. 
We use $\mathcal{F}(R, r):=\{\mathcal{K}_{1},\dots, \mathcal{K}_{\ell}\}$ to denote the family of all $K_{r}$-embeddable structures in $R$.

\begin{defn}[]\label{def2.8}
Let $R$ be a $k$-vertex multigraph with multiplicity 2 and $\mathcal{F}(R, r)$ be given as above. Then a fractional $\mathcal{F}(R, r)$-tiling $\omega$ in $R$ is a weight function from the members of $\mathcal{F}(R, r)$ to the interval $[0, 1]$ such that for every vertex $v\in V(R)$ it holds that
\[
\omega(v) :=\sum\limits_{\mathcal{K}\in\mathcal{F}(R, r)}\omega(\mathcal{K})i_{\mathcal{K}}(v)\leq 1.
\]
\end{defn}
We call $\omega(R):=\sum_{v\in V(G)}\omega(v)$ the \emph{total weight} of the fractional $\mathcal{F}(R, r)$-tiling $\omega$ and it is a \emph{perfect} fractional tiling for $R$ if $\omega(R)=|V(R)|$.
We shall use the following result to find a fractional $\mathcal{F}(R, r)$-tiling with a large total weight, whose proof will be given in Section \ref{sec3.3}.
\begin{lemma}[]\label{lem4.4}
For $h\in \mathbb{N}$, $h\geq3$, an $h$-vertex graph $H$ and positive constants $\mu, \eta$, there exist $\beta, \varepsilon, \alpha>0$ such that the following holds for sufficiently large $n$. Let $G$ be an $n$-vertex graph with $\delta(G)\geq \max\left\{\left(1-\frac{2}{f(H)}+\mu\right)n, \left(\frac{1}{2}+\mu\right)n\right\}$, $\alpha(G)\leq \alpha n$ and $R:=R_{\beta, \varepsilon}$ be a reduced graph for an $(\varepsilon, \beta)$-regular partition of $G$. Then $R$ contains a fractional $\mathcal{F}(R, f(H))$-tiling $\omega$ such that $\omega(R)\geq (1-\eta)|V(R)|$.
\end{lemma}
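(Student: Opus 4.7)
The plan is to follow the strategy of Knierim and Su~\cite{MR4193066}, extending their fractional-tiling construction for $K_r$-embeddable structures to the setting $r := f(H) \in \{2ar(H)-1, 2ar(H)\}$. Set $k := |V(R)|$, and recall from Fact~\ref{fact2.5} that the reduced multigraph $R$ has weighted minimum degree $d_R(v) \ge 2(1 - 2/r + \mu/2)k$, where each double-edge contributes $2$. The ``even'' case $r = 2ar(H)$ mirrors~\cite{MR4193066} directly; the ``odd'' case $r = 2ar(H)-1$, which occurs precisely when $H\in\widetilde{\mathcal{H}}$, requires the slight extra flexibility of allowing one solo vertex inside each $K_r$-embeddable structure (Definition~\ref{def2.7}).

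The backbone is LP duality applied to the fractional $\mathcal{F}(R, r)$-tiling LP. Suppose for contradiction that the maximum fractional tiling weight is strictly less than $(1-\eta)k$. Then strong duality yields a fractional cover $y\colon V(R)\to\mathbb{R}_{\ge 0}$ with $\sum_v y(v) < (1-\eta)k$ such that $\sum_v y(v)\, i_{\mathcal{K}}(v) \ge r$ for every $K_r$-embeddable structure $\mathcal{K}$. I would perform a weight accounting, thresholding $y$ at the scale $1/r$, to isolate a substantial ``light'' subset $L \subseteq V(R)$ on which $y$ is strictly below $1/r$ and which inherits most of the weighted minimum degree of $R$. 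The contradiction follows from constructing a single $K_r$-embeddable structure $\mathcal{K}$ with $V(\mathcal{K})\subseteq L$: any such $\mathcal{K}$ satisfies
\begin{equation*}
\sum_v y(v)\, i_{\mathcal{K}}(v) < \frac{1}{r}\cdot r = 1 < r,
\end{equation*}
violating the dual constraint for $\mathcal{K}$.

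To exhibit $\mathcal{K}$ inside $L$, I would follow the clique-packing argument of Knierim--Su: for suitable $a, b \ge 0$ with $a + 2b = r$, locate a clique of size $a + b$ in the simple underlying graph of $R[L]$ in which $b$ specified vertices are pairwise joined by double-edges. The inherited weighted minimum degree of $R[L]$, of order $(2(1 - 2/r) + \Omega(\mu))|L|$, drives a multigraph Hajnal--Szemer\'edi-style counting argument that produces many such structures. The parity of $r$ dictates which pairs $(a, b)$ are admissible; for $r = 2ar(H)-1$ odd, the permitted solo vertex corresponds precisely to the role of the part $T_1$ in the asymmetric acyclic partition defining $\widetilde{\mathcal{H}}$.

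The main obstacle is to guarantee that enough double-edges actually appear in $R[L]$, since the weighted-degree bound alone can be realised by a near-bipartite, single-edge-only reduced graph that contains no $K_r$-embeddable structure whatsoever. This is where $\alpha(G) \le \alpha n$ becomes essential: if a linear-sized family of clusters were pairwise joined only by single-edges (of density below $1/2 + \beta$), then by the Slicing Lemma~\ref{lem2.2} combined with Ramsey--Tur\'an techniques of Erd\H{o}s--Hajnal--S\'os--Szemer\'edi~\cite{MR716422} (as used in~\cite{MR3570984,MR4193066}), one could combine near-independent transversals from each cluster across the low-density cross-pairs into an independent set of size $\Omega(n)$ in $G$, contradicting $\alpha(G) = o(n)$. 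Once enough double-edges are secured, the Knierim--Su packing argument transfers with only notational changes to produce the required $\mathcal{K}\subseteq L$, completing the contradiction.
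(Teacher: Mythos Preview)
Your approach diverges substantially from the paper's and, as written, contains a genuine gap.

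The paper's proof is short and essentially a black-box citation: for $f(H)\ge 4$ it invokes Corollary~\ref{coro3.3}, which packages Lemma~\ref{lem4.1} and Lemma~\ref{lem4.2} from Knierim--Su~\cite{MR4193066} (an auxiliary simple graph $\Gamma$ is built from $R$, a near-perfect fractional $K_r$-tiling is found in $\Gamma$ via Lemma~\ref{lem4.1}, and then transferred to a fractional $\mathcal{F}(R,r)$-tiling in $R$ via Lemma~\ref{lem4.2}). For $f(H)\in\{2,3\}$ the paper applies the same corollary with $r=4$ and then decomposes each $K_4$-embeddable structure into $K_3$- or $K_2$-embeddable structures carrying the same vertex weights. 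No odd/even distinction on $f(H)$ is needed at this stage, and there is no direct LP argument on $R$.

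Your LP-duality sketch has a quantitative hole at the thresholding step. With the dual constraints $\sum_v y(v)\,i_{\mathcal K}(v)\ge r$ and $\sum_v y(v)<(1-\eta)k$, thresholding at $1/r$ gives only $|\{v:y(v)\ge 1/r\}|\le r(1-\eta)k$, which for $r\ge 2$ says nothing about $|L|$; the light set could be empty. The correct threshold for your displayed contradiction is $1$ (since $\sum_v i_{\mathcal K}(v)=r$, any $\mathcal K$ supported on $L=\{v:y(v)<1\}$ already violates the dual constraint), but that yields only $|L|>\eta k$, and you give no argument for locating a $K_r$-embeddable structure inside a set of \emph{arbitrarily small} proportion $\eta$ of $V(R)$: the weighted minimum-degree bound $2(1-2/r+\mu/2)k$ is relative to all of $R$, not to $R[L]$, so the claim that $L$ ``inherits most of the weighted minimum degree'' is unsupported. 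The Knierim--Su argument does not proceed on the dual side at all; it constructs the auxiliary graph $\Gamma$, exhibits a large fractional $K_r$-tiling there directly from the degree and independence-number hypotheses, and translates it back to $R$.
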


\subsection{An embedding Lemma}\label{sec2.2}
The main goal of this subsection is to prove an embedding lemma for our purpose. Recall that Lemma~\ref{lem4.4} gives us a large fractional tiling with $K_{f(H)}$-embeddable structures.
Let $\{\mathcal{K}_{1}, \dots, \mathcal{K}_{\ell}\}$ be all the $K_{f(H)}$-embeddable structures in $R$. Roughly speaking, Lemma \ref{lem4.4} tells us that one can correspond the proportion of weight occupied by $\mathcal{K}_{i}$, e.g. $\omega(\mathcal{K}_{i})$ into disjoint vertex sets $W_{i}$ in $G$, in each of which we shall find an almost perfect $H$-tiling. To achieve this, e.g. for $\mathcal{K}_{1}$, we have unique non-negative integers $a, b$ such that $a+b=|V(\mathcal{K}_{1})|$ and $a+2b=f(H)$, and we build an intermediate auxiliary graph $Q(a, b)$ such that we can find an almost perfect $Q(a, b)$-tiling in $W_{1}$ and $Q(a, b)$ itself has an $H$-factor.



To elaborate on this, we first define a graph $Q(a, b, s, F_{1},\ldots, F_{b})$ with given forests $F_{1},\ldots, F_{b}$. 

\begin{defn}[]\label{def2.15}
Let $a, b, s\in \mathbb{N}$ and $F_{i}$ be any $2s$-forest for each $i\in [b]$. Then we construct a graph $Q:=Q(a, b, s, F_{1}, \dots, F_{b})$ with $V(Q)=U_{1}\cup\cdots\cup U_{a+b}$ which satisfies following conditions:
\begin{itemize}
  \item $Q[U_{i}, U_{j}]$ is a complete bipartite graph for distinct $i, j\in [a+b]$;
  \item $Q[U_{i}]$ is an $s$-independent set for $i\in [a]$;
  \item $Q[U_{a+j}]$ is a $2s$-forest $F_{j}$ for $j\in [b]$.
\end{itemize}
\end{defn}

We omit the index $(a, b, s, F_{1}, \dots, F_{b})$ if it is clear from context. By the definition of $Q$, we have $2|U_{i}|=|U_{j}|$ for each $i\in [a]$ and $j\in [a+1, a+b]$.

The following lemma is an essential gadget which allows us to embed an auxiliary graph in $G[V_{1}\cup \cdots\cup V_{a+b}]$ with $a, b$ given as above.


\begin{lemma}[Embedding lemma]\label{lem2.17}
Let $a, b, s$ be positive integers and $\beta>0$. Then there exist $N_0\in \mathbb{N}$ and positive constants $\alpha, \varepsilon$ such that the following holds for any $N\ge N_0$. Let $G$ be a graph with $V(G)=V_{1}\cup \cdots \cup V_{a+b}$, $\alpha(G)\leq \alpha |V(G)|$, $|V_{i}|\geq N$ for each $i\in [a+b]$ such that $(V_{i}, V_{j})$ is $\varepsilon$-regular, $d(V_{i}, V_{j})\geq \beta$ for distinct $i\in [a]$, $j\in [a+b]$ and $d(V_{i}, V_{j})\geq \frac{1}{2}+\beta$ for distinct $i, j\in [a+1, a+b]$. Then for any given $2s$-forests $\{F_{1}, \dots, F_{b}\}$ there exists a copy of $Q(a, b, s, F_{1}, \dots, F_{b})$ in $G$ whose vertex set, say $U_{1}\cup \cdots\cup U_{a+b}$, satisfies $U_{i}\subseteq V_{i}$ for each $i\in [a+b]$.
\end{lemma}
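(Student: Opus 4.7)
The plan is to embed $Q$ into $G$ in two stages: first selecting the independent-set parts $U_1, \ldots, U_a$ in $V_1, \ldots, V_a$, and then embedding the forests $F_1, \ldots, F_b$ into $V_{a+1}, \ldots, V_{a+b}$ with the required complete multipartite structure binding everything together. Choose positive constants $\alpha \ll \varepsilon \ll \beta$ and an integer $N_0$ sufficiently large, all depending on $a, b, s$ and $\beta$.

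Stage 1. For each $i \in [a+b]$, discard the at most $(a+b)\varepsilon|V_i|$ atypical vertices $v \in V_i$ failing $|N(v) \cap V_j| \geq (d_{ij} - \varepsilon)|V_j|$ for some $j \neq i$, where $d_{ij}$ denotes the density of $(V_i, V_j)$. Then for $i = 1, 2, \ldots, a$ in order, greedily select $s$ vertices $u_1, \ldots, u_s$ to form $U_i$ via iterated use of the Slicing Lemma (Lemma~\ref{lem2.2}): starting from $V_j^{(0)} := V_j$, at substep $\ell$ pick $u_\ell \in V_i$ so that $|N(u_\ell) \cap V_j^{(\ell-1)}| \geq (d_{ij} - \varepsilon')|V_j^{(\ell-1)}|$ for every remaining $j$, and set $V_j^{(\ell)} := V_j^{(\ell-1)} \cap N(u_\ell)$. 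The Slicing Lemma keeps the evolving pair regular of density $\geq \beta/2$, so such $u_\ell$ always exists. After Stage 1, the sets $U_1, \ldots, U_a$ are pairwise complete in $G$, and we obtain subsets $V_{a+j}' \subseteq V_{a+j}$ with $|V_{a+j}'| \geq c_1|V_{a+j}|$ for some positive constant $c_1 = c_1(a, s, \beta)$, every vertex of which is adjacent to all of $U_1 \cup \cdots \cup U_a$; by a further application of the Slicing Lemma each pair $(V_{a+j}', V_{a+j'}')$ remains $\varepsilon''$-regular of density at least $1/2 + \beta/2$.

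Stage 2. Embed $F_1, \ldots, F_b$ iteratively. Maintain candidate sets $W_{a+j}^{(l)} \subseteq V_{a+j}'$ of vertices adjacent to every already-embedded vertex of $F_1 \cup \cdots \cup F_l$; initially $W_{a+j}^{(0)} := V_{a+j}'$. At step $l+1$, embed $F_{l+1}$ inside $W_{a+l+1}^{(l)}$, which by induction has size a constant fraction of $|V_{a+l+1}|$ and inherits the independence-number bound $\alpha|V(G)|$. To prevent the candidate sets from collapsing in later steps, we require the chosen copy of $F_{l+1}$ to be \emph{typical}: $\bigl|\bigcap_{u \in V(F_{l+1})} N(u) \cap V_{a+j}'\bigr| \geq (1/2 + \beta/3)^{2s}|V_{a+j}'|$ for every $j > l+1$. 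A counting-lemma variant of regularity shows that at most an $o(1)$ fraction of the $2s$-tuples in $W_{a+l+1}^{(l)}$ are non-typical; combined with a Ramsey supersaturation argument using $\alpha(G) \leq \alpha|V(G)|$ (and the Chv\'atal-type bound $R(F_{l+1}, K_q) \leq (2s-1)(q-1)+1$ with $q = \alpha|V(G)|+1$), this yields a typical copy of $F_{l+1}$ inside $W_{a+l+1}^{(l)}$. Updating $W_{a+j}^{(l+1)} := W_{a+j}^{(l)} \cap \bigcap_{u \in F_{l+1}} N(u)$ keeps $|W_{a+j}^{(l+1)}|$ a constant fraction of $|V_{a+j}|$, sustaining the induction.

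The principal obstacle is controlling the coupled decays in Stage 2: each iteration shrinks the candidate set by a factor of at least $(1/2 + \beta/3)^{2s}$, the slicing progressively worsens the regularity parameter, and the Ramsey argument requires $|W_{a+j}^{(l)}| > (2s-1)\alpha|V(G)|$ throughout. All three effects are constant-factor losses depending only on $a, b, s$ and $\beta$, so choosing $\alpha, \varepsilon$ sufficiently small and $N_0$ sufficiently large in the initial hierarchy closes the induction and yields the desired copy of $Q(a, b, s, F_1, \ldots, F_b)$.
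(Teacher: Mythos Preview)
Your Stage 1 is fine and matches the paper's treatment of the case $a\ge 1$: greedily pick the $s$ vertices of each $U_i$ while tracking common neighbourhoods via the Slicing Lemma.

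Stage 2, however, has a genuine gap. You argue that (i) at most an $o(1)$ fraction of the $2s$-tuples in $W:=W_{a+l+1}^{(l)}$ are non-typical, and (ii) the independence condition together with the Chv\'atal bound forces a copy of $F_{l+1}$ in $G[W]$, and then conclude that some copy of $F_{l+1}$ is typical. But (i) and (ii) do not combine: the independence-number hypothesis only gives $e(G[W])=\Theta(|W|/\alpha)$ edges, so the number of copies of any fixed $2s$-vertex forest in $G[W]$ is $O(|W|/\alpha^{2s-1})$, i.e.\ linear in $|W|$ (think of $G[W]$ as a disjoint union of $\alpha|W|$ cliques of size $1/\alpha$). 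Meanwhile the number of non-typical $2s$-tuples is $\Theta(\varepsilon'|W|^{2s})$, which swamps the forest count for large $|W|$. No ``Ramsey supersaturation'' in this regime produces $\Omega(|W|^{2s})$ forests, so you cannot deduce that a typical copy exists. Note also that removing individually-atypical vertices does not help: with density only $\tfrac12+\beta$, individual typicality of $2s$ vertices gives no lower bound on their joint common neighbourhood once $s\ge 2$.

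The paper handles this step quite differently. The crucial observation is that density at least $\tfrac12+\beta$ guarantees that \emph{every} pair $u,v$ of typical vertices in $W$ satisfies $|N(u)\cap N(v)\cap W_j|\ge \beta|W_j|$, so one can define edge-maps $f_j(uv):=N(u)\cap N(v)\cap W_j$ with uniformly large images. The paper then invokes a lemma of Erd\H{o}s--Hajnal--S\'os--Szemer\'edi (Lemma~\ref{lem2.14}): in any graph on $V_0$ with $\Omega(|V_0|)$ edges (supplied here by Proposition~\ref{prop2.12}), one can find an $(r_1,r_2)$-subgraph $H_1$ such that $\bigl|\bigcap_{e\in E(H_1)} f_j(e)\bigr|$ is still a constant fraction of $|W_j|$ for every $j$. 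Since an $(r_1,r_2)$-graph with $r_1=r_2=2s$ contains every $2s$-vertex tree, this yields the desired copy of $F_{l+1}$ whose common neighbourhood in each $W_j$ is large, and the induction proceeds. This edge-based intersection lemma is precisely the missing ingredient in your Stage 2.
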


To make use of Lemma \ref{lem2.17}, we shall need the following lemma which guarantees the existence of $Q(a,b)$ as aforementioned.



\begin{lemma}\label{lem2.16}
Let $h\in\mathbb{N}$, $H$ be an $h$-vertex graph and $a, b\in \mathbb{N}$ with $a+2b=f(H)$. Then there exist $s\in \mathbb{N}$ with $s\le h$ and a family of forests $\{F_{1}, \dots, F_{b}\}$ such that $Q(a, b, s, F_{1}, \dots, F_{b})=:Q(a,b)$ contains an $H$-factor.
\end{lemma}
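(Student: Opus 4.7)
The plan is to build $Q(a,b)$ explicitly as the edge-completion of a ``blow-up'' of $m$ carefully placed copies of $H$, with the slot partition induced by a refined acyclic partition of each copy.

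I begin by fixing an optimal acyclic partition $\{T_1,\ldots,T_r\}$ of $H$ with $r=ar(H)$; when $H\in\widetilde{\mathcal{H}}$, I take the distinguished partition in which $T_1$ is independent of size $t$ and $|T_k|=2t$ for $k\ge 2$, so that $h=(2r-1)t$. For each forest $T_k$ I fix a bipartition $T_k=I_k^+\cup I_k^-$ into independent sets. Since $a+2b=f(H)$ has the same parity as $f(H)$, I can write $a=2j$ and $b=r-j$ when $H\notin\widetilde{\mathcal{H}}$, and $a=2j+1$ and $b=r-1-j$ when $H\in\widetilde{\mathcal{H}}$, for some integer $j\ge 0$. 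In each copy of $H$, splitting $j$ of the forest parts via their bipartitions (always keeping $T_1$ intact in the second case) yields a refined partition of $V(H)$ into exactly $a$ independent parts and $b$ forest parts, matching the slot structure required by Definition~\ref{def2.15}.

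I then take $m$ copies of $H$ with $(m,s)=(2r,h)$ when $H\notin\widetilde{\mathcal{H}}$, and $(m,s)=(2r-2,(2r-2)t)$ when $H\in\widetilde{\mathcal{H}}$; in both cases $s\le h$ and $mh=sf(H)$. Across the $m$ copies I choose the $j$-subsets of forest parts to split using a balanced cyclic design so that each relevant $T_k$ is split in exactly $2j$ copies and remains whole in the remaining $m-2j$ copies, which is achievable thanks to the equalities $(mj)/r=2j$ and $(mj)/(r-1)=2j$ in the two cases respectively. A cyclic assignment of the whole parts to big slots then ensures that each big slot receives each $T_k$ exactly twice, giving per-slot size $2\sum_k t_k=2h=2s$ in the first case, and $2(r-1)\cdot 2t=2s$ in the second (since $|T_k|=2t$ for all $k\ge 2$); in the second case all $m$ copies of $T_1$ land in $U_1$ for size $mt=s$ automatically. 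For the remaining small slots I route the $2j$ pieces $I_k^+$ and $2j$ pieces $I_k^-$ of each split $T_k$ via another cyclic shift so that every such slot receives exactly one $I_k^+$ and one $I_k^-$ (from different copies splitting $T_k$), contributing $|I_k^+|+|I_k^-|=t_k$ per $k$ and summing to $\sum_k t_k=s$. After adding all cross-slot edges, the resulting graph is exactly $Q(a,b,s,F_1,\ldots,F_b)$ with $F_j$ a disjoint union of sub-forests, and the $m$ copies of $H$ constitute the claimed $H$-factor.

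The main subtlety, which dictates the non-obvious choice $m=2r-2$ rather than $m=f(H)=2r-1$ in the second case, is that exact per-slot balance on the small slots requires each split $T_k$ to contribute both of its bipartition pieces to every small slot receiving a piece of $T_k$; only then does that slot's contribution from $T_k$ equal $t_k$ independent of whether the chosen bipartition is balanced. This ``pair $I_k^+$ with $I_k^-$'' routing demands an even number of splits per $T_k$, which our choices of $m$ arrange automatically, so that the rest of the distribution reduces to a straightforward cyclic design.
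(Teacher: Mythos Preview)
Your argument is correct, and in the case $H\notin\widetilde{\mathcal{H}}$ it is essentially the paper's proof: the paper builds exactly the cyclic $2r\times(a+b)$ matrix $A^*$ you describe abstractly, splitting a sliding window of $j=a/2$ parts per row, doubling via the $A_1/A_2$ swap so that every small slot picks up one $I_k^+$ and one $I_k^-$ for each $k$, and every big slot picks up each $T_k$ twice.

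Where you diverge is in the case $H\in\widetilde{\mathcal{H}}$. You run the same cyclic machinery with $m=2r-2$ copies and $s=(2r-2)t$, which works (the divisibility $(mj)/(r-1)=2j$ is exactly what makes the balanced design go through, and the $+/-$ swap across the doubled family routes the halves correctly), but it is heavier than necessary. The paper instead exploits the defining feature of $\widetilde{\mathcal{H}}$ directly: since $|T_k|=2|T_1|$ for every $k\ge 2$, one can take just $m=2$ copies and $s=2|T_1|$. The first small slot holds two copies of $T_1$; the remaining $a-1$ small slots are the pairs $(T_{k,1},T_{k,2})$ and $(T_{k,2},T_{k,1})$ for $k=1,\ldots,(a-1)/2$; and each big slot holds two copies of a single $T_k$. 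No cyclic design is needed at all. Your construction trades this simplicity for uniformity between the two cases; both are valid, and your version has the minor advantage that the argument does not bifurcate, at the cost of leaving the cyclic routing as a sketch that the reader must unfold.
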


Note that the $Q(a, b)$ is not necessary unique. In the rest of the proof, we fix an instance of $Q(a, b)$ as returned by Lemma \ref{lem2.16}, which serves as building blocks in our proof of Lemma \ref{lem3.1}. For convenience, we formulate this in the following corollary.


\begin{coro}\label{coro2.17}
For any constant $\beta>0$, positive integers $a,b, h$ and an $h$-vertex graph $H$ with $a+2b=f(H)$, there exist $\alpha, \varepsilon>0$ and an integer $s$ with $s\leq h$ such that the following holds for sufficiently large $N$. Let $G$ be a graph with $\alpha(G)\leq \alpha |V(G)|$, $V(G)=V_{1}\cup\cdots \cup V_{a+b}$, $|V_{i}|\ge N$ for each $i\in [a+b]$, $(V_{i}, V_{j})$ be $\varepsilon$-regular with $d(V_{i}, V_{j})\geq \beta$ for distinct $i\in [a]$, $j\in [a+b]$ and $d(V_{i}, V_{j})\geq \frac{1}{2}+\beta$ for distinct $i, j\in [a+1, a+b]$. Then there exists a copy of $Q(a,b)$ in $G$ whose vertex set, say $U_{1}\cup \cdots \cup U_{a+b}$, satisfies $2|U_{i}|=|U_{j}|=2s$ for $i\in [a]$ and $j\in [a+1, a+b]$ and $U_{i}\subseteq V_{i}$ for every $i\in [a+b]$.
\end{coro}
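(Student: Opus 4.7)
The plan is to derive this corollary directly from Lemma~\ref{lem2.16} and Lemma~\ref{lem2.17}. First, apply Lemma~\ref{lem2.16} to the $h$-vertex graph $H$ with the given integers $a,b$ satisfying $a+2b=f(H)$. This produces an integer $s\le h$ together with a family of $2s$-forests $F_1,\ldots,F_b$ such that the graph $Q(a,b):=Q(a,b,s,F_1,\ldots,F_b)$ contains an $H$-factor. These parameters depend only on $H$, $a$ and $b$ and carry no information about $G$, so they can be fixed up front.

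With $s$ now fixed, apply Lemma~\ref{lem2.17} to the tuple $(a,b,s,\beta)$. That lemma supplies constants $\alpha,\varepsilon>0$ and a threshold $N_0\in\mathbb{N}$; we take these as the $\alpha,\varepsilon$ and implicit lower bound on $N$ in the corollary. The hypotheses of Corollary~\ref{coro2.17} -- namely $\alpha(G)\le\alpha|V(G)|$, $|V_i|\ge N\ge N_0$, together with the prescribed $\varepsilon$-regularity and the density conditions $d(V_i,V_j)\ge\beta$ for $i\in[a],j\in[a+b]$ distinct and $d(V_i,V_j)\ge\tfrac12+\beta$ for $i,j\in[a+1,a+b]$ distinct -- are exactly the hypotheses of Lemma~\ref{lem2.17}. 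Feeding in the specific forests $F_1,\ldots,F_b$ output by Lemma~\ref{lem2.16}, Lemma~\ref{lem2.17} yields a copy of $Q(a,b,s,F_1,\ldots,F_b)$ whose vertex set $U_1\cup\cdots\cup U_{a+b}$ satisfies $U_i\subseteq V_i$ for every $i\in[a+b]$. By Definition~\ref{def2.15} this copy has $|U_i|=s$ for $i\in[a]$ and $|U_{a+j}|=2s$ for $j\in[b]$, which is precisely the conclusion $2|U_i|=|U_j|=2s$ required by the corollary.

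The only point that needs care is the order of parameter selection: $s$ must be determined \emph{before} $\alpha,\varepsilon,N_0$, because the constants produced by Lemma~\ref{lem2.17} depend on $s$. This is unproblematic since $s\le h$ is forced by $H,a,b$ alone and is independent of $G$, so the hierarchy $1/N\ll\alpha,\varepsilon\ll\beta,1/a,1/b,1/s$ (and hence $1/h$) is consistent. I do not expect any genuine obstacle at this stage: the corollary is essentially a packaging of Lemma~\ref{lem2.16} (existence of the auxiliary building block $Q(a,b)$) and Lemma~\ref{lem2.17} (its embedding under the regularity hypotheses), and all real content is contained in those two lemmas rather than in the corollary itself.
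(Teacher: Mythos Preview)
Your proposal is correct and follows exactly the paper's approach: first invoke Lemma~\ref{lem2.16} to obtain $s\le h$ and the forests $F_1,\ldots,F_b$, then apply Lemma~\ref{lem2.17} with these fixed data to embed $Q(a,b)$. Your discussion of the parameter hierarchy is in fact more careful than the paper's own two-line proof.
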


\begin{proof} Given $a, b, h\in \mathbb{N}$, we choose $\frac{1}{N}\ll \alpha\ll\varepsilon\ll \beta$. 
Applying Lemma \ref{lem2.17} with $F_1,F_2,\ldots,F_b$ obtained from Lemma~\ref{lem2.16}, we can embed a copy of $Q(a,b)$ into $G$ as desired.
\end{proof}

\subsection{Proof of Lemma \ref{lem3.1}}\label{sec4}
Equipped with a fractional tiling (Lemma \ref{lem4.4}) in the reduced graph and an embedding lemma (Corollary \ref{coro2.17}), we are able to find an almost perfect $H$-tiling in the original graph $G$.


\begin{proof} [Proof of Lemma \ref{lem3.1}] Given $h\in \mathbb{N}$, an $h$-vertex graph and positive constants $\delta, \mu$, we shall choose
\begin{center}
$\frac{1}{n}\ll \alpha\ll \frac{1}{k}\ll \varepsilon\ll \beta, \eta\ll \delta, \mu, \frac{1}{h}$.
\end{center}
Let $G$ be an $n$-vertex graph with $\delta(G)\geq \max\left\{\left(1-\frac{2}{f(H)}+\mu\right)n, \left(\frac{1}{2}+\mu\right)n\right\}$ and $\alpha(G)\leq \alpha n$.
Applying Lemma \ref{lem2.3} with $\varepsilon, \beta>0$, we obtain an $\varepsilon$-regular partition $\mathcal{P}=\{V_{0}, V_{1}, \dots, V_{k}\}$ of $V(G)$.
Let $m:=|V_{i}|$ for each $i\in [k]$ and $R:=R_{\beta, \varepsilon}$ be a reduced multigraph of the partition $\mathcal{P}$ with multiplicity $2$ and $V(R)=\{V_{1}, \dots, V_{k}\}$.
Write $\mathcal{F}:=\mathcal{F}(R, f(H))=\{\mathcal{K}_{1},\dots, \mathcal{K}_{\ell}\}$.
By applying Lemma \ref{lem4.4} on $R$ with $\eta$, we obtain a fractional $\mathcal{F}$-tiling $\omega$ such that
\begin{center}
$\omega(R)\geq (1-\eta)k$.
\end{center}


Then we construct $\omega'$ from the fractional tiling $\omega$ by scaling the weight function $\omega$ of every $K_{f(H)}$-embeddable-structure with a factor of $(1-\eta)$ i.e. for every $K_{f(H)}$-embeddable-structure $\mathcal{K}$ we have $\omega'(\mathcal{K})=(1-\eta)\omega(\mathcal{K})$. Thus $\omega'$ has total weight at least $(1-2\eta)k$.

For each $K_{f(H)}$-embeddable-structure $\mathcal{K}$ with $\omega'(\mathcal{K})>0$, we have a unique pair of integers $a, b\in \mathbb{N}$ such that $a+b=|V(\mathcal{K})|$ and $a+2b=f(H)$. Write $C_{\mathcal{K}}=\omega'(\mathcal{K})m$. Now we construct a $Q(a,b)$-tiling $\mathcal{Q}_{\mathcal{K}}$ by greedily picking vertex-disjoint copies of $Q(a,b)$ in $G$ such that $\mathcal{Q}_{\mathcal{K}}$ is maximal subject to the fact that it contains at most $i_{\mathcal{K}}(V_i)C_{\mathcal{K}}$ vertices from each $V_i, i\in [k]$. We repeat the process for every $K_{f(H)}$-embeddable-structure $\mathcal{K}$ with positive weight, such that the corresponding $Q(a,b)$-tilings $\mathcal{Q}_{\mathcal{K}}$ are pariwise vertex-disjoint. Note that at the end of this process the set of uncovered vertices in each $V_i$, denoted by $V_i'$, has size \[|V_i'|\ge|V_i|-\sum_{\mathcal{K}\in\mathcal{F}}i_{\mathcal{K}}(V_i)C_{\mathcal{K}}=m-\omega'(V_i)m\ge m-(1-2\eta)m\ge 2\eta m.\]
Now, we claim that every $\mathcal{Q}_{\mathcal{K}}$ covers at least $i_{\mathcal{K}}(V_i)(C_{\mathcal{K}}-h)$ vertices from each $V_i, i\in[k]$. Otherwise, by assuming that $V(\mathcal{K})=\{V_1,V_2,\ldots,V_{a+b}\}$ and applying Lemma~\ref{lem2.2} and Corollary~\ref{coro2.17}, we can pick one more copy of $Q(a,b)$ in $G[V_1'\cup\cdots\cup V_{a+b}']$ which contains at most $i_{\mathcal{K}}(V_i)h$ vertices in each $V_i'$ for $i\in[a+b]$. This contradicts the maximality of $\mathcal{Q}_{\mathcal{K}}$.

Therefore, the total number of vertices covered as above is at least \[\sum_{i\in[k]}\sum_{\mathcal{K}\in\mathcal{F}}(C_{\mathcal{K}}-h)i_{\mathcal{K}}(V_i)=m\sum_{i\in[k]}\sum_{\mathcal{K}\in\mathcal{F}}\omega'(\mathcal{K})i_{\mathcal{K}}(V_i)
-h\sum_{i\in[k]}\sum_{\mathcal{K}\in\mathcal{F}}i_{\mathcal{K}}(V_i)\ge (1-3\eta)mk,\]
where the last inequality follows as $\omega'(R)=\sum_{i\in[k]}\sum_{\mathcal{K}\in\mathcal{F}}\omega'(\mathcal{K})i_{\mathcal{K}}(V_i)\ge(1-2\eta)k$ and $\frac{1}{n}\ll \frac{1}{k}\ll \eps\ll \eta$. As each $Q(a,b)$ contains an $H$-factor and $\eta\ll \delta$, the union of these $\mathcal{Q}_{\mathcal{K}}$ provides an $H$-tiling which covers all but at most $n-(1-3\eta)mk\le \delta n$ vertices in $G$ and this completes the proof.
\end{proof}
In the next subsection, we prove Lemma \ref{lem4.4}, Lemma \ref{lem2.17} and Lemma \ref{lem2.16}.

\subsection{Proof of related lemmas}\label{sec3.3}

\subsubsection{Proof of Lemma \ref{lem4.4}}
The proof of Lemma \ref{lem4.4} relies on the following two results Lemma \ref{lem4.1} \cite{MR4193066} and Lemma \ref{lem4.2} \cite{MR4193066}.

\begin{lemma}[\cite{MR4193066}, Lemma 4.4]\label{lem4.1}
For every $r\in \mathbb{N}$ with $r\ge 4$ and $\eta, \mu>0$, there exist $\alpha>0$ and $n_{0}\in \mathbb{N}$ such that every graph $G$ on $n\geq n_{0}$ vertices with $\delta(G)\geq \left(1-\frac{2}{r}+\mu\right)n$ and $\alpha(G)< \alpha n$ has a fractional $K_{r}$-tiling $\omega$ such that
\begin{center}
$|\{v\in G: \omega(v)<1-\eta\}|\leq \eta n$.
\end{center}
\end{lemma}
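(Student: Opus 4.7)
The plan is to apply the regularity method and construct the fractional $K_r$-tiling indirectly through the reduced multigraph. After applying Szemer\'{e}di's regularity lemma to $G$ with parameters $\eps, \beta \ll \mu, \eta$ to obtain an $(\eps, \beta)$-regular partition $V_0 \cup V_1 \cup \cdots \cup V_k$, form the reduced multigraph $R$ of multiplicity $2$ in which double-edges mark pairs of density $\geq \tfrac{1}{2} + \beta$ and single-edges mark pairs of density in $[\beta, \tfrac{1}{2} + \beta)$. The degree hypothesis $\delta(G) \geq (1 - 2/r + \mu)n$ then translates, as in Fact~\ref{fact2.5}, into the double-edge-counted degree of each vertex of $R$ being at least $2(1 - 2/r + \mu/2)k$.

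Next, I would construct a fractional $K_r$-multi-embedding tiling $\widetilde{\omega}$ on $R$ of total weight $\widetilde{\omega}(R) \geq (1 - \eta/2)k$. Conceptually, a $K_r$-multi-embedding places the $r$ vertices of $K_r$ into a clique of $R$ of size between $\lceil r/2 \rceil$ and $r$, where each double-edge is allowed to simultaneously host two vertices of $K_r$. Existence of such an almost-perfect fractional multi-tiling follows from the high degree condition in $R$ by LP duality combined with an iterated common-neighborhood argument: absence of such a tiling would yield a sparse fractional multi-cover, contradicting the degree bound. Crucially, the multiplicity-$2$ edges lower the effective degree threshold for this multi-tiling from the Hajnal--Szemer\'{e}di value $1 - 1/r$ down to $1 - 2/r$, which is exactly the hypothesis of the lemma.

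The third step is to \emph{lift} $\widetilde{\omega}$ to a fractional $K_r$-tiling $\omega$ of $G$. For each $K_r$-multi-embedding structure $\mathcal{K}$ with $\widetilde{\omega}(\mathcal{K}) > 0$, I would use the associated regular pairs together with the Erd\H{o}s--Hajnal--S\'{o}s--Szemer\'{e}di embedding tool (which uses density $\geq \tfrac{1}{2} + \beta$ and the sublinear independence number $\alpha(G) < \alpha n$ to guarantee a large matching inside each dense regular pair) to embed many vertex-disjoint $K_r$-copies in the union of clusters of $\mathcal{K}$, with one vertex per single-cluster and two adjacent vertices per double-cluster. Assigning these $K_r$-copies uniform weights aggregating to $\widetilde{\omega}(\mathcal{K})$, and summing over all $\mathcal{K}$, yields a fractional $K_r$-tiling $\omega$ in $G$ with $\sum_v \omega(v) \geq (1 - \eta) n$, from which $|\{v : \omega(v) < 1 - \eta\}| \leq \eta n$ follows by a simple averaging bound.

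The main obstacle will be the second step — producing the near-perfect fractional $K_r$-multi-embedding tiling on $R$ with the sharp degree condition $2(1 - 2/r + \mu/2)k$. Handling the LP duality in this multigraph setting, and in particular tracking how single versus double edges can participate in each $K_r$-multi-structure (so that the $a + 2b = r$ book-keeping works), is the technical heart of the argument, and is precisely where the Ramsey--Tur\'{a}n improvement from $1 - 1/r$ to $1 - 2/r$ enters the picture.
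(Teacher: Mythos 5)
First, note that the paper does not actually prove this statement: it is imported verbatim from Knierim--Su \cite{MR4193066} (their Lemma 4.4) and used as a black box, so there is no internal proof to compare against; what follows is therefore an assessment of your argument on its own terms.

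Your plan has a genuine gap at its centre. The whole difficulty of the lemma is concentrated in your second step, the claim that the reduced multigraph $R$ with multi-degree at least $2\left(1-\frac{2}{r}+\frac{\mu}{2}\right)k$ admits an almost perfect fractional $\mathcal{F}(R,r)$-tiling; you assert this ``follows by LP duality combined with an iterated common-neighbourhood argument'' and then concede it is the main obstacle, so the heart of the proof is named rather than carried out. Worse, as formulated this step uses only a degree condition on a multigraph and makes no use of the hypothesis $\alpha(G)<\alpha n$, whereas that hypothesis is essential: the join of an independent set of size roughly $\frac{2}{r}n$ with a clique on the remaining vertices satisfies $\delta(G)\geq\left(1-\frac{2}{r}\right)n$ yet has no almost perfect fractional $K_r$-tiling, so any correct proof must make the independence-number assumption do real work, and an ``iterated common-neighbourhood'' argument has no engine in $R$, where no independence-type condition is available. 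Relegating the independence hypothesis entirely to the lifting step does not rescue this, because the reduced-graph statement itself is then unsupported. There is also a circularity risk: the only known route to such fractional $\mathcal{F}(R,r)$-tilings (the paper's Lemma~\ref{lem4.2}, i.e.\ Knierim--Su's Lemma 4.7) derives them by applying precisely the lemma you are trying to prove to an auxiliary graph $\Gamma$ with small independence number, so your architecture runs the implication in the opposite direction from the one that is actually available. By contrast, the statement itself concerns $G$ directly and Knierim--Su prove it without any regularity decomposition.

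Two smaller points. In the lifting step the edges you must embed lie \emph{inside} clusters (two adjacent vertices of a $K_r$-copy placed in one cluster), not inside regular pairs, so the independence number is what supplies these edges; your phrasing about matchings ``inside each dense regular pair'' blurs this. Finally, the closing averaging is wrong as stated: total weight $\sum_v\omega(v)\geq(1-\eta)n$ only gives the vacuous bound $|\{v:\omega(v)<1-\eta\}|<n$; to conclude $|\{v:\omega(v)<1-\eta\}|\leq\eta n$ you need total weight at least $(1-\eta^{2})n$, i.e.\ you must run the construction with a smaller error parameter. This is fixable by adjusting constants, but the missing second step is not.
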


\begin{lemma}[\cite{MR4193066}, Lemma 4.7]\label{lem4.2}
For every $r\in \mathbb{N}$ with $r\geq 4$ and $\mu, \eta>0$, there exist $\beta, \varepsilon, \gamma>0$ such that the following holds for sufficiently large $n$. Let $G$ be an $n$-vertex graph with $\delta(G)\geq \left(1-\frac{2}{r}+\mu\right)n$, $\alpha(G)\leq \gamma n$ and $R:=R_{\beta, \varepsilon}$ be a reduced multigraph with multiplicity $2$, $k:=|V(R)|$. There is a graph $\Gamma$ with $\delta(\Gamma)\geq \left(1-\frac{2}{r}+\frac{\mu}{4}\right)|V(\Gamma)|$ and $\alpha(\Gamma)\leq \gamma|V(\Gamma)|$ such that the following holds.

If $\Gamma$ has a fractional $K_{r}$-tiling with total wight at least $(1-\eta)|V(\Gamma)|$, then $G$ contains a $K_{r}$-tiling covering at least $(1-2\eta)n$ vertices. Moreover, $R$ contains a fractional $\mathcal{F}(R,r)$-tiling $\omega$ such that $\omega(R)\geq (1-\eta)k$.
\end{lemma}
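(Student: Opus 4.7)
\medskip
\noindent\textbf{Proof proposal for Lemma~\ref{lem4.2}.}
The plan is to construct an auxiliary ``blow-up'' graph $\Gamma$ whose vertices are small independent sets distributed across the clusters of the regular partition, so that (fractional) $K_{r}$-tilings in $\Gamma$ faithfully model (fractional) $K_{r}$-tilings in $G$ that respect the cluster structure, and then to apply Lemma~\ref{lem4.1} to $\Gamma$. First I would apply Lemma~\ref{lem2.3} to $G$ with parameters $\varepsilon, \beta$ to obtain the regular partition $V(G)=V_{0}\cup V_{1}\cup\cdots\cup V_{k}$ with reduced multigraph $R=R_{\beta,\varepsilon}$, each $|V_{i}|=m$. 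Using $\alpha(G)\le\gamma n$, greedily partition (most of) each cluster $V_{i}$ into \emph{micro-independent sets} of some bounded size $s=s(\gamma)$; let $\mathcal I_{i}$ be this family. Define $V(\Gamma)$ to consist of these micro-independent sets, together with, for every pair $(V_{i},V_{j})$ joined by a double-edge in $R$, an auxiliary collection of ``double-slot'' vertices representing the option of absorbing two vertices of $K_{r}$ in the corresponding cluster (these will model the structures from Definition~\ref{def2.7} where $i_{\mathcal K}(v)=2$). Draw an edge in $\Gamma$ between two (possibly double-slot) vertices iff the clusters they live in are joined in $R$ with a compatible multiplicity, so that every $K_{r}$ in $\Gamma$ projects down (via the map sending a slot to its host cluster) to a $K_{r}$-multi-embedding of Definition~\ref{def2.7}.

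Next I would verify that $\Gamma$ inherits the required parameters. Since $\alpha(G)\le\gamma n$, any independent set in $\Gamma$ sits in a union of pairwise-$R$-non-adjacent clusters, whose union contains no $\Gamma$-independent set bigger than $\gamma|V(\Gamma)|$ for a suitable choice of constants; this yields $\alpha(\Gamma)\le\gamma|V(\Gamma)|$. For the degree bound, by Fact~\ref{fact2.5} each $V_{i}$ has double-edge-counted degree at least $2(1-2/r+\mu/2)k$ in $R$, and counting how many micro-slots (including double-slots) are adjacent in $\Gamma$ to a given slot gives $\delta(\Gamma)\ge(1-2/r+\mu/4)|V(\Gamma)|$ once $\varepsilon,\beta$ are small compared to $\mu$ and $1/s$ is chosen compatibly. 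Now suppose $\Gamma$ admits a fractional $K_{r}$-tiling $\omega_{\Gamma}$ with total weight $\ge(1-\eta)|V(\Gamma)|$. By rounding $\omega_{\Gamma}$ and exploiting that each positively-weighted $K_{r}$ in $\Gamma$ specifies a placement of $K_{r}$-vertices into regular (respectively super-dense, for double-edges) pairs in $G$, a greedy extraction using the slicing lemma (Lemma~\ref{lem2.2}) and the standard fact that regular pairs of density $\ge\beta$ contain many $K_{2}$'s (and density $\ge\tfrac12+\beta$ contain many $K_{2,2}$'s after slicing) produces pairwise vertex-disjoint copies of $K_{r}$ in $G$ covering all but $\le2\eta n$ vertices.

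Finally, for the ``moreover'' assertion, projecting $\omega_{\Gamma}$ to $R$ yields the fractional $\mathcal{F}(R,r)$-tiling: for each $K_{r}$-embeddable structure $\mathcal K$ in $R$, set $\omega(\mathcal K)$ to be $s/m$ times the total $\omega_{\Gamma}$-weight placed on cliques of $\Gamma$ whose slot-to-cluster projection is $\mathcal K$. Since $\omega_{\Gamma}$ covers $\ge(1-\eta)|V(\Gamma)|$ of the slots and each cluster $V_{i}$ corresponds to $(1-o(1))m/s$ slots, we obtain $\omega(R)\ge(1-\eta)k$ after routine accounting. To produce the required $\omega_{\Gamma}$ in the first place, one applies Lemma~\ref{lem4.1} to $\Gamma$ with parameters $(\mu/4,\eta')$ for appropriate $\eta'\ll\eta$ and converts its ``almost everybody has weight close to $1$'' conclusion into total weight $\ge(1-\eta)|V(\Gamma)|$. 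The main obstacle I expect is defining $\Gamma$ precisely so that $K_{r}$'s in $\Gamma$ are in exact correspondence with $K_{r}$-embeddable structures of $R$ (Definition~\ref{def2.7}), i.e.\ so that a clique in $\Gamma$ cannot ``cheat'' by using two single-slots from clusters connected only by a single edge—this is what forces the auxiliary double-slot construction and what also drives the choice of $s$ in balancing the independence and degree calculations.
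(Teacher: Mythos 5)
You are not really comparing against an in-paper argument here: this paper does not prove Lemma~\ref{lem4.2} at all, but imports it verbatim from Knierim and Su \cite{MR4193066} (their Lemma 4.7), remarking only that the ``moreover'' clause is stated explicitly inside that proof. Measured against what the statement requires, your construction has a genuine gap, and it is exactly at the point you flagged as delicate.

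The gap is the independence-number transfer. In your $\Gamma$, whether two slots are adjacent is determined solely by the adjacency of their host clusters in $R$; the actual edges of $G$ inside and between clusters are forgotten. But the hypotheses only give $R$ a minimum degree of about $\bigl(1-\frac{2}{r}+\frac{\mu}{2}\bigr)k$ per edge-multiplicity (Fact~\ref{fact2.5}), so $R$ can perfectly well contain independent sets of size $\Theta(k)$ (up to roughly $\frac{2}{r}k$). Taking all single-slots lying in such a family of pairwise non-adjacent clusters yields an independent set of $\Gamma$ of size $\Theta(|V(\Gamma)|)$, which destroys the claim $\alpha(\Gamma)\le\gamma|V(\Gamma)|$ and hence the planned application of Lemma~\ref{lem4.1} to $\Gamma$. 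You cannot repair this by declaring all slots inside one cluster pairwise adjacent: then a $K_r$ of $\Gamma$ could place three or more vertices into a single cluster, which neither projects to a $K_r$-multi-embedding (Definition~\ref{def2.7} allows at most two vertices per cluster) nor can be realised in $G$, since $\alpha(G[V_i])\le\gamma n$ guarantees an edge in every sufficiently large subset of a cluster but not a triangle. The sublinear independence number has to be inherited at the level of actual vertices of $G$ --- the auxiliary graph must keep genuine $G$-edges inside clusters --- rather than at the cluster level, and this is precisely what a purely slot-based $\Gamma$ cannot deliver. A second, related defect: your ``double-slot'' vertices are attached to \emph{pairs} of clusters joined by a double-edge, whereas in Definition~\ref{def2.7} the doubled pair lives inside \emph{one} cluster (inducing an edge there), and a double-edge is required only between two clusters that are both doubled; moreover, a single $\Gamma$-vertex standing for two $G$-vertices breaks the vertex accounting needed both for the bounds on $\delta(\Gamma)$ and $\alpha(\Gamma)$ and for converting ``total weight at least $(1-\eta)|V(\Gamma)|$'' into ``a $K_r$-tiling of $G$ covering at least $(1-2\eta)n$ vertices''. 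Your final steps (projecting the fractional tiling to $R$ with a $\frac{s}{m}$ scaling, and the greedy regularity extraction of disjoint $K_r$'s) are fine in spirit, but they rest on the flawed construction of $\Gamma$.
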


The ``moreover'' part of the statement is not a part of the original statement of the Lemma \ref{lem4.2} in \cite{MR4193066} but is stated explicitly in the proof.

For convenience, we need the following corollary.

\begin{coro}\label{coro3.3}
For every $r\in \mathbb{N}$ with $r\geq 4$ and $\mu, \eta>0$, there exist $\beta, \varepsilon, \gamma>0$ such that the following holds for sufficiently large $n$. Let $G$ be an $n$-vertex graph with $\delta(G)\geq \left(1-\frac{2}{r}+\mu\right)n$, $\alpha(G)\leq \gamma n$ and $R:=R_{\beta, \varepsilon}$ be a reduced multigraph with multiplicity $2$. Then $R$ contains a fractional $\mathcal{F}(R,r)$-tiling $\omega$ such that $\omega(R)\geq (1-\eta)|V(R)|$.
\end{coro}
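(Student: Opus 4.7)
The plan is to derive Corollary \ref{coro3.3} by chaining together Lemma \ref{lem4.1} and Lemma \ref{lem4.2}: the former supplies a large fractional $K_r$-tiling in an auxiliary graph with the right minimum degree and independence conditions, while the latter translates such a tiling back to the reduced multigraph $R$.

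For the parameter setup, given $r, \mu, \eta > 0$, first I would apply Lemma \ref{lem4.2} with parameters $r, \mu, \eta$ to obtain constants $\beta, \varepsilon, \gamma_1 > 0$ and a threshold size $n_1$. Next I would apply Lemma \ref{lem4.1} with parameters $r$, minimum-degree slack $\mu/4$, and weight-loss parameter $\eta' := \eta/2$ to obtain $\alpha_1 > 0$ and a threshold $n_2$. Finally I would set $\gamma := \min\{\gamma_1, \alpha_1/2\}$ and choose $n$ large enough that the auxiliary graph $\Gamma$ produced by Lemma \ref{lem4.2} satisfies $|V(\Gamma)| \geq \max\{n_1, n_2\}$.

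Now given an $n$-vertex graph $G$ satisfying the hypotheses with these parameters, Lemma \ref{lem4.2} produces an auxiliary graph $\Gamma$ with $\delta(\Gamma) \geq (1 - 2/r + \mu/4)|V(\Gamma)|$ and $\alpha(\Gamma) \leq \gamma|V(\Gamma)| < \alpha_1 |V(\Gamma)|$. Applying Lemma \ref{lem4.1} to $\Gamma$ yields a fractional $K_r$-tiling $\omega_\Gamma$ with $|\{v \in V(\Gamma) : \omega_\Gamma(v) < 1 - \eta/2\}| \leq (\eta/2)|V(\Gamma)|$. Since $\omega_\Gamma(v) \leq 1$ for every $v$, the total weight is bounded below by
\[
\sum_{v \in V(\Gamma)} \omega_\Gamma(v) \;\geq\; (1 - \eta/2)(1 - \eta/2)|V(\Gamma)| \;\geq\; (1 - \eta)|V(\Gamma)|.
\]

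Having produced a fractional $K_r$-tiling in $\Gamma$ of weight at least $(1-\eta)|V(\Gamma)|$, I would invoke the ``moreover'' conclusion of Lemma \ref{lem4.2}, which asserts that under exactly this hypothesis $R$ admits a fractional $\mathcal{F}(R, r)$-tiling $\omega$ with $\omega(R) \geq (1 - \eta)|V(R)|$, as required. There is no real obstacle here; the content of the corollary is the assembly of the two cited lemmas, and the only mild care needed is to pick $\eta/2$ in Lemma \ref{lem4.1} so that the ``few bad vertices'' conclusion upgrades to the total-weight lower bound demanded by Lemma \ref{lem4.2}, and to take $\gamma$ small enough to meet both independence-number requirements simultaneously.
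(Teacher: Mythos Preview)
Your proposal is correct and follows essentially the same approach as the paper: apply Lemma~\ref{lem4.2} to produce the auxiliary graph $\Gamma$, feed $\Gamma$ into Lemma~\ref{lem4.1} with $\mu/4$ in place of $\mu$, and read off the fractional $\mathcal{F}(R,r)$-tiling from the ``moreover'' clause of Lemma~\ref{lem4.2}. The paper compresses this into a single sentence, while you spell out the parameter hierarchy and the passage from the ``few bad vertices'' conclusion of Lemma~\ref{lem4.1} to the total-weight bound needed in Lemma~\ref{lem4.2}; these details are routine and your treatment of them is fine.
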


Corollary \ref{coro3.3} comes directly from Lemma \ref{lem4.1} and Lemma \ref{lem4.2} by applying Lemma \ref{lem4.1} on the graph $\Gamma$ in Lemma \ref{lem4.2} where $\frac{\mu}{4}$ plays the role of $\mu$.

Next, we prove Lemma \ref{lem4.4}.


\begin{proof} [Proof of Lemma \ref{lem4.4}] We write $k:=|V(R)|$. If $f(H)\geq 4$, then applying Corollary \ref{coro3.3} with $r=f(H)$ we obtain that there exists a fractional $\mathcal{F}\left(R,f(H)\right)$-tiling $\omega$ and
\begin{center}
$\omega(R)=\sum_{V_{i}\in V(R)}\omega(V_{i})\geq (1-\eta)k$.
\end{center}

If $f(H)=2, 3$, then $\delta(G)\geq \left(\frac{1}{2}+\mu\right)n$. Applying Corollary \ref{coro3.3} with $r=4$, we obtain that there exists a fractional $\mathcal{F}(R,4)$-tiling $\omega$. Next, we construct from $\omega$ a fractional $\mathcal{F}(R,3)$-tiling $\omega_{1}$ and a fractional $\mathcal{F}(R,2)$-tiling $\omega_{2}$ such that $\omega_{1}(V_i)=\omega_{2}(V_i)=\omega(V_i)$ for every vertex $V_i$ in $R$. Note that $\mathcal{F}(R, 4)$ is the family of all $K_{4}$-embeddable structures in $R$. For every $\mathcal{K}\in \mathcal{F}(R, 4)$, if $\mathcal{K}$ is a copy of $K_{4}$ in $R$ and the triangles in the $\mathcal{K}$ are denoted as $\{\mathcal{K}^{1}, \mathcal{K}^{2}, \mathcal{K}^{3}, \mathcal{K}^{4}\}$, then we define $\omega_{1}(\mathcal{K}^{i})=\frac{1}{3}\omega(\mathcal{K})$ for each $i\in [4]$. If $\mathcal{K}$ is a triangle in $R$, say $V_1V_2V_3$ such that $2i_{\mathcal{K}}(V_i)=i_{\mathcal{K}}(V_3)=2, i\in[2]$, then we have three $K_3$-embeddable structures $\mathcal{K}^1,\mathcal{K}^2,\mathcal{K}^3$ defined as follows: $\mathcal{K}^1=V_1V_3$ with $i_{\mathcal{K}^1}(V_1)=1$ and $i_{\mathcal{K}^1}(V_3)=2$; $\mathcal{K}^2=V_2V_3$ with $i_{\mathcal{K}^2}(V_2)=1$ and $i_{\mathcal{K}^2}(V_3)=2$; $\mathcal{K}^3=V_1V_2V_3$ with $i_{\mathcal{K}^3}(V_i)=1$ for every $i\in[3]$. In this case, we define $\omega_{1}(\mathcal{K}^{3})=2\omega_{1}(\mathcal{K}^{i})=\frac{2}{3}\omega(\mathcal{K})$ for each $i\in[2]$. If $\mathcal{K}$ is a double-edge in $R$, say $V_1V_2\in E(R)$ and the multiple edges in the double-edge are denoted as $\{\mathcal{K}^{1}, \mathcal{K}^{2}\}$, then $\mathcal{K}^{1}$ (or $\mathcal{K}^{2}$) can be simply regarded as a $K_{3}$-embeddable structure with $i_{\mathcal{K}^1}(V_1)=1$ and $i_{\mathcal{K}^1}(V_2)=2$ (resp. $i_{\mathcal{K}^2}(V_1)=2$ and $i_{\mathcal{K}^2}(V_2)=1$). Here we define  $\omega_{1}(\mathcal{K}^{i})=\frac{2}{3}\omega(\mathcal{K})$ for each $i\in [2]$.
In all cases, it is easy to see that $\omega_{1}(V_{i})=\omega(V_{i})$ for every $i\in [k]$ and thus $\omega_{1}(R)=\sum_{V_{i}\in V(R)}\omega_{1}(V_{i})=\omega(R)$.

Next we shall construct a fractional $\mathcal{F}(R,2)$-tiling $\omega_{2}$ from $\omega$ such that $\sum_{V_{i}\in V(R)}\omega_{2}(V_{i})\geq (1-\eta)k$. By Definition \ref{def2.7}, every vertex $V_i$ in $V(R)$ is a $K_{2}$-embeddable structure, and we define $\omega_{2}(V_{i})=\omega(V_{i})$ for every $i\in [k]$. Then $\omega_{2}(R)=\sum_{V_{i}\in V(R)}\omega_{2}(V_{i})=\omega(R)$.
\end{proof}
\subsubsection{Proof of Lemma \ref{lem2.17}}

Before the proof of Lemma \ref{lem2.17}, we need several results as follows. The first one is due to Gy\'{a}rf\'{a}s, Szemer\'{e}di and Tuza \cite{MR1395865} and independently, Sumner \cite{MR634555}.

\begin{lemma}[\cite{MR1395865}, \cite{MR634555}]\label{lem2.9}
A k-chromatic graph contains every tree on $k$ vertices as a subgraph.
\end{lemma}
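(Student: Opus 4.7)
The plan is to reduce to a subgraph with high minimum degree and then embed the tree greedily along a convenient vertex ordering. First, I would pass to a subgraph $G' \subseteq G$ that is \emph{$k$-critical}, meaning $\chi(G') = k$ but every proper subgraph of $G'$ has chromatic number at most $k-1$; such a $G'$ exists because $G$ is $k$-chromatic and we can repeatedly delete vertices while preserving chromatic number $k$ until we reach a minimal such subgraph. A standard short argument shows that $\delta(G') \geq k-1$: if some $v \in V(G')$ had $d_{G'}(v) \leq k-2$, then by criticality $G'-v$ admits a proper $(k-1)$-colouring, and since the neighbourhood of $v$ uses at most $k-2$ colours, a free colour remains for $v$, contradicting $\chi(G')=k$.

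Next, I would fix any root $r \in V(T)$ and order the vertices of $T$ as $v_1 = r, v_2, \ldots, v_k$ so that every $v_i$ with $i \geq 2$ has exactly one neighbour among $\{v_1, \ldots, v_{i-1}\}$; this is possible by a BFS or DFS traversal of $T$ from $r$, under which each non-root vertex appears after its unique parent. Call this neighbour the \emph{parent} of $v_i$, and denote it $v_{p(i)}$ where $p(i) < i$.

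Finally, I would embed $T$ into $G'$ greedily. Pick any $u_1 \in V(G')$ as the image of $v_1$. Suppose $v_1, \ldots, v_{i-1}$ have already been mapped injectively to $u_1, \ldots, u_{i-1} \in V(G')$; we need to choose $u_i \in N_{G'}(u_{p(i)}) \setminus \{u_1, \ldots, u_{i-1}\}$. Since $d_{G'}(u_{p(i)}) \geq k-1$ and at most $i-2 \leq k-2$ previously used vertices (other than $u_{p(i)}$ itself) can lie in $N_{G'}(u_{p(i)})$, at least one valid choice for $u_i$ exists. Iterating to $i=k$ produces the desired embedded copy of $T$ in $G'$, hence in $G$.

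The proof is short and essentially classical, so no real obstacle is expected. The only non-trivial input is the fact that $k$-critical graphs have minimum degree at least $k-1$, and everything else is a routine greedy embedding that exploits this minimum-degree bound together with the tree ordering. I would present the argument in this concise form rather than appealing to a specific reference.
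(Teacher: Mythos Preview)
The paper does not give its own proof of this lemma; it is simply quoted as a known result attributed to Gy\'arf\'as--Szemer\'edi--Tuza and Sumner. Your argument is correct and is exactly the standard classical proof: pass to a $k$-critical subgraph, use the easy fact that such a subgraph has minimum degree at least $k-1$, and then greedily embed the tree along a root-first ordering. There is nothing to compare against, and no gap in what you wrote.
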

In our context, we shall use the following corollary of Lemma \ref{lem2.9}.

\begin{coro}\label{coro2.10}
Let $n\geq k$ be any integers and $G$ be an $n$-vertex graph with $\alpha(G)\leq \frac{n}{k}$. Then every $k$-tree is contained in $G$.
\end{coro}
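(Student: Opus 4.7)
The plan is immediate from Lemma~\ref{lem2.9}: it suffices to show that $\chi(G)\geq k$, and then the lemma produces any $k$-vertex tree as a subgraph of $G$, including any prescribed $k$-tree. So the corollary is essentially a translation of Lemma~\ref{lem2.9} from the language of chromatic number into the language of independence number, which is the parameter governing the entire paper.

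To bound the chromatic number, I would invoke the elementary inequality $\chi(G)\cdot\alpha(G)\geq |V(G)|$, which follows at once from the fact that every proper colouring of $G$ partitions $V(G)$ into $\chi(G)$ independent sets, each of size at most $\alpha(G)$. Combining this with the hypothesis $\alpha(G)\leq n/k$ yields $\chi(G)\geq n/\alpha(G)\geq k$, and then an application of Lemma~\ref{lem2.9} finishes the argument. There is no real obstacle: the whole proof is a two-line deduction, with all the nontrivial content absorbed into the Gy\'arf\'as--Szemer\'edi--Tuza/Sumner theorem cited as Lemma~\ref{lem2.9}.
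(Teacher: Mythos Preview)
Your proposal is correct and is exactly the intended argument: the paper states Corollary~\ref{coro2.10} as an immediate corollary of Lemma~\ref{lem2.9} without supplying a proof, and the only way to pass from the hypothesis $\alpha(G)\le n/k$ to the hypothesis of Lemma~\ref{lem2.9} is via the elementary bound $\chi(G)\ge n/\alpha(G)\ge k$ that you spell out.
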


The next gadget in our proof is Lemma \ref{lem2.14} proved by Erd\H{o}s, Hajnal, S\'{o}s and Szemer\'{e}di~\cite{MR716422}, which enables us to embed one tree inside the common neighborhood of other trees under
certain density conditions.
Before the statement of Lemma \ref{lem2.14}, we first need the following notation of $(r_{1}, r_{2})$-graphs~\cite{MR716422}.

\begin{defn}[\cite{MR716422}, Definition 2.2]\label{def2.13}
For $r_{1}, r_{2}\in \mathbb{N}$, a graph $G(V, E)$ is said to be an $(r_{1}, r_{2})$-$graph$ with root $v\in V(G)$ if $|V(G)|\leq r_{1}^{r_{2}}+1$ and each $u\in V(G)$ with distance at most $r_{1}$ from $v$ has degree at least $r_{2}$.
\end{defn}

Obviously, for $r\geq 1$ an arbitrary tree of $r+1$ vertices is a subgraph of any $(r, r)$-graph.

\begin{lemma}[\cite{MR716422}, Lemma 2.4]\label{lem2.14}
Given $r_{1}, r_{2}, p\in \mathbb{N}$ and $c>0$, there exist positive constants $c'$ and $s$ such that the following holds for sufficiently large $n$. Let $V_{0}, V_{1}, \dots, V_{p}$ be vertex sets each of size $n$ and $G$ be a graph defined on $V_{0}$ with $|E(G)|\geq sn$. Then for all given mappings $f_{i}: E(G)\rightarrow [V_{i}]^{\geq cn}$ with $i\in[p]$, there exists an $(r_{1}, r_{2})$-graph $H_{1}\subseteq G$ with

\begin{center}
$\big|\bigcap_{e\in E(H_{1})} f_{i}(e)\big|\geq c' n$ for every $i\in [p]$.
\end{center}

\end{lemma}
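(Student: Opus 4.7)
The plan is to prove the lemma by induction on $r_1$, combining dependent random choice (DRC) with a BFS-style construction of the $(r_1, r_2)$-graph $H_1$. Throughout the construction we maintain the invariant that the running intersection $\bigcap_{e} f_i(e)$ taken over edges selected so far remains of size $\Omega(n)$ for every $i \in [p]$.

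First (preprocessing), since $|E(G)| \geq sn$ we iteratively delete vertices of degree below $s/4$ to obtain $G' \subseteq G$ with $\delta(G') \geq s/4$; all further work is done in $G'$. For the base case $r_1 = 0$ (the $(0, r_2)$-graph is a root $v$ with $r_2$ pendant edges), fix any vertex $v$ of degree $d \geq s/4$, and for each $i$ regard the assignment $e \mapsto f_i(e)$ restricted to edges at $v$ as a bipartite graph $B_i^v$ between the $d$ edges at $v$ and $V_i$, of density at least $c$. A standard product-DRC computation via Jensen's inequality yields
\[
\sum_{(e_1, \ldots, e_{r_2}) \text{ at } v} \prod_{i=1}^{p} \Bigl| \bigcap_{j=1}^{r_2} f_i(e_j) \Bigr| \;=\; \sum_{(y_1, \ldots, y_p) \in V_1 \times \cdots \times V_p} \bigl( \lvert\{e \text{ at } v : y_i \in f_i(e) \text{ for all } i\}\rvert \bigr)^{r_2} \;\geq\; n^p (d c^p)^{r_2},
\]
so some tuple has $\prod_i |\bigcap_j f_i(e_j)| \geq c^{p r_2} n^p$; since each factor is at most $n$, each $|\bigcap_j f_i(e_j)| \geq c^{p r_2} n$, establishing the case $r_1 = 0$ with $c'_0 := c^{p r_2}$.

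For the inductive step, assume the lemma holds for $r_1 - 1$ with constants $c'_{r_1-1}$ and $s_{r_1-1}$. Apply the base case at a high-degree root $v \in G'$ to select $r_2$ children $w_1, \ldots, w_{r_2}$ and initial intersections $S_i^{(0)}$ of size $\geq c'_0 n$. Proceed recursively: for each $w_j$ we build an $(r_1 - 1, r_2)$-subtree rooted at $w_j$ using the restricted mapping $\tilde{f}_i(e) := f_i(e) \cap S_i^{(j-1)}$ (where $S_i^{(j-1)}$ is the running intersection after the first $j-1$ subtrees), in a restricted host graph obtained by keeping only edges where $|\tilde{f}_i(e)|$ is a positive fraction of $|S_i^{(j-1)}|$. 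The restricted host must have at least $s_{r_1 - 1} n$ edges and the restricted maps must have density at least $c/2$ in the new universe $S_i^{(j-1)}$ for the inductive hypothesis to apply; iterating $r_1$ times yields $c' = c^{O(p r_2^{r_1+1})}$ and a corresponding $s$ depending only on $c, p, r_1, r_2$.

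The main obstacle is the inductive step: the running intersection $S_i$ may be an arbitrary subset of $V_i$, and then the restricted mapping $\tilde{f}_i$ need not satisfy any density condition (for instance, $f_i(e)$ could lie entirely in $V_i \setminus S_i$ for most $e$). This is overcome by strengthening the base case so that the intersections $S_i^{(0)}$ chosen at $v$ lie predominantly inside the set of \emph{heavy} vertices $V_i^* := \{y \in V_i : \deg_{B_i}(y) \geq c |E(G')|/2\}$, where $B_i$ is the bipartite graph between $E(G')$ and $V_i$; by Markov $|V_i^*| \geq (c/2) n$, and a two-sided DRC restricting the pivots $(y_1, \ldots, y_p)$ to $V_1^* \times \cdots \times V_p^*$ (and then running the product-DRC above) ensures $S_i^{(0)} \subseteq V_i^*$ up to a small error. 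A second averaging then gives $\sum_{e \in E(G')} |f_i(e) \cap S_i^{(0)}| \geq (c/2) |S_i^{(0)}| |E(G')|$, from which a linear fraction of edges satisfy $|\tilde{f}_i(e)| \geq (c/4) |S_i^{(0)}|$, so the restricted host is dense enough for the recursion to proceed.
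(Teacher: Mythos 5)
The paper does not actually prove this lemma --- it is quoted from Erd\H{o}s--Hajnal--S\'os--Szemer\'edi \cite{MR716422} --- so there is no internal proof to compare with; I can only assess your argument on its own terms, and it has a genuine gap in the inductive step. Your base-case identity and Jensen computation at a single root are fine (modulo the routine point that the optimal $r_2$-tuple of edges may contain repetitions, and the cosmetic point that a star or a branching tree does not literally meet the size bound $r_1^{r_2}+1$ of Definition~\ref{def2.13}). The problem is the recursion. To apply the inductive hypothesis you need a restricted host consisting of edges $e$ with $|f_i(e)\cap S_i|\geq \tfrac{c}{4}|S_i|$ \emph{simultaneously for every} $i\in[p]$, and your heavy-vertex patch only yields, for each fixed $i$, that a $\geq c/4$ fraction of edges is good for that single $i$. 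These good sets need not intersect: partition $E(G')$ into $p$ parts $E_1,\dots,E_p$, and for $e\in E_j$ let $f_j(e)=V_j\setminus S_j$ while every $y\in S_i$ lies in $f_i(e)$ for all $e\notin E_i$; then every $y\in S_i$ still has $\deg_{B_i}(y)\geq(1-\tfrac1p)|E|\geq \tfrac{c}{2}|E|$ (so $S_i\subseteq V_i^*$ and both of your averaging inequalities hold), yet no edge whatsoever is good for all $i$ at once, so the restricted host is empty and the recursion dies. The same obstruction already blocks the ``two-sided DRC restricted to heavy pivots'': it needs the edges at the chosen root to satisfy $|f_i(e)\cap V_i^*|=\Omega(n)$ for all $i$, which the global averages do not provide at any particular vertex.

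There is a second structural gap: you invoke the inductive hypothesis to produce an $(r_1-1,r_2)$-subtree \emph{rooted at the prescribed child} $w_j$, but the statement being proved only locates such a structure somewhere in a dense host, and the rooted version at an arbitrary prescribed vertex is simply false --- take $f(e)=A$ for all edges at $u$ and $f(e)=B$ with $A\cap B=\emptyset$ for all other edges, so already a two-edge path rooted at $u$ has empty intersection no matter how large the degrees are. So one cannot repair the argument merely by strengthening the inductive statement to allow prescribed roots; the choice of where the next level attaches must itself be part of a global averaging over the whole bounded structure (which is essentially the content of the EHSS lemma), rather than a vertex-by-vertex DRC against adversarially shrunken universes $S_i$. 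As it stands, the proposal does not establish the lemma for any $p\geq 2$.
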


Note that to invoke Lemma \ref{lem2.14} in the proof of Lemma \ref{lem2.17}, we need Proposition \ref{prop2.12} which gives a lower bound on the number of edges in the setting of small independence number.

\begin{prop}[]\label{prop2.12}
Let $G$ be an $n$-vertex graph with $\alpha(G)\leq \alpha n$. Then $e(G)\geq \frac{(1-\alpha)n}{2\alpha}$.
\end{prop}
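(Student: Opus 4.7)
The plan is to apply the classical Caro--Wei lower bound (or, equivalently, the standard greedy/Turán bound) on the independence number in terms of the degree sequence, and then rearrange.

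Specifically, recall that $\alpha(G) \ge \sum_{v \in V(G)} \frac{1}{d(v)+1}$. By the Cauchy--Schwarz (or AM--HM) inequality applied to this sum one has
\[
\sum_{v \in V(G)} \frac{1}{d(v)+1} \;\ge\; \frac{n^{2}}{\sum_{v \in V(G)} (d(v)+1)} \;=\; \frac{n^{2}}{2e(G)+n},
\]
using the handshake identity $\sum_v d(v) = 2e(G)$. Combining the two displayed bounds yields $\alpha(G) \ge \frac{n^{2}}{2e(G)+n}$.

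Now I would simply plug in the hypothesis $\alpha(G) \le \alpha n$. This gives $\alpha n \ge \frac{n^{2}}{2e(G)+n}$, i.e.\ $2e(G)+n \ge n/\alpha$, and hence
\[
e(G) \;\ge\; \frac{1}{2}\left(\frac{n}{\alpha}-n\right) \;=\; \frac{(1-\alpha)n}{2\alpha},
\]
which is the desired inequality. There is essentially no obstacle here; the only ``choice'' is which version of the classical bound to invoke, and Caro--Wei combined with Cauchy--Schwarz gives the cleanest one-line derivation. (An equally short alternative is to note that the average degree $d := 2e(G)/n$ satisfies $\alpha(G)\ge n/(d+1)$ by Turán's theorem, and then rearrange as above.)
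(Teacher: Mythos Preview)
Your proof is correct and essentially identical to the paper's own argument: both combine the Caro--Wei bound $\alpha(G)\ge \sum_v 1/(d(v)+1)$ with the AM--HM inequality applied to the numbers $d(v)+1$, and then rearrange using $\alpha(G)\le \alpha n$. The only cosmetic difference is that the paper writes the AM--HM step as $\frac{\sum_v (d(v)+1)}{n}\ge \frac{n}{\sum_v 1/(d(v)+1)}$ directly, whereas you state the equivalent rearranged form.
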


\begin{proof}
Note that
\begin{align*}
  1+\frac{2e(G)}{n} & =\frac{\sum_{v \in V(G)}\left(d(v)+1\right)}{n} \geq \frac{n}{\sum_{v \in V(G)}\frac{1}{d(v)+1}}\ge\frac{1}{\alpha},
\end{align*}
where the first inequality follows from the fact that the arithmetic mean is at least the harmonic mean, and the last inequality follows from a well-known result in \cite{MR1090733} stating that $\alpha(G) \geq\sum_{v\in V(G)}\frac{1}{d(v)+1}$.
Thus $e(G)\geq \frac{(1-\alpha)n}{2\alpha}$.
\end{proof}

Next we prove Lemma~\ref{lem2.17}.

\begin{proof}[Proof of Lemma \ref{lem2.17}] Given $a, b, s\in \mathbb{N}$ and $\beta>0$, we shall choose
\[\frac{1}{N}\ll \alpha\ll\varepsilon\ll \varepsilon_{a+b-1}\ll \cdots\ll \varepsilon_{1}\ll c'\ll \beta,\frac{1}{s}\] and let $G, F_1,\ldots,F_b$ be given in Lemma~\ref{lem2.17}.

The proof will proceed by induction on $a+b$. The base case $a+b=1$ is clear as either $Q$ is an $s$-independent set or a $2s$-forest $F_{1}$: If $a=1$ and $b=0$, then we only need to choose a vertex set $U_{1}\subseteq V_{1}$ with $|U_{1}|=s$ which is easily derived since $|V_{1}|\geq N \geq s$. If $a=0$ and $b=1$, then we only need to embed a $2s$-forest $F_{1}$ in $G[V_{1}]$. By Corollary \ref{coro2.10} with $\alpha\leq \frac{1}{2s}$, $G[V_{1}]$ contains every $2s$-forest.


Next we show that our statement holds for $a+b=k$ assuming it holds for $a+b=\ell<k$. 
First assume $a\geq 1$. Since $(V_{i}, V_{j})$ is $\varepsilon$-regular in $G$ for distinct $i, j\in [1, a+b]$, there exists a subset $V_{1}'\subseteq V_{1}$ such that every vertex in $V_{1}'$ has at least $\left(d(V_{1}, V_{j})-\varepsilon\right)|V_{j}|$ neighbors in $V_{j}$ for each $j\in [2, a+b]$ and $|V_{1}'|\geq \left(1-\varepsilon(a+b-1)\right)|V_{1}|$. In $V'_{1}$, we choose $s$ vertices $\{v_{1}, \dots, v_{s}\}$ with $S_{j}^{1}:=N(v_{1})\cap V_{j}$, $S_{j}^{i}:= N(v_{i})\cap S_{j}^{i-1}$ and $|N(v_{i})\cap S_{j}^{i-1}|\geq (\beta-\varepsilon)|S_{j}^{i-1}|\geq \frac{\beta}{2}|S_{j}^{i-1}|$ for each $i\in [2, s]$ and each $j\in [2, a+b]$. Then the $s$ vertices in $V'_{1}$ have a common neighborhood $S_{j}:=S_{j}^{s}$ in $V_{j}$ for each $j\in [2, a+b]$ with $|S_{j}|\geq (\frac{\beta}{2})^{s}|V_{j}|$. Applying Lemma \ref{lem2.2} to $G[\bigcup_{i=2}^{a+b} S_{i}]$, we obtain that $(S_{i}, S_{j})$ is $\varepsilon'$-regular with $\varepsilon':=\max\left\{2\varepsilon, \frac{\varepsilon2^{s}}{\beta^{s}}\right\}=\frac{\varepsilon2^{s}}{\beta^{s}}$ and $d(S_{i}, S_{j})\geq d(V_{i}, V_{j})-\varepsilon\geq d(V_{i}, V_{j})-\frac{\beta}{2}$ for distinct $i, j\in [2, a+b]$. Since $\varepsilon\ll \varepsilon_{a+b-1}$, it holds that $(S_{i}, S_{j})$ is $\varepsilon_{a+b-1}$-regular for distinct $i, j\in [2, a+b]$.
Note that $|S_{j}|\geq (\frac{\beta}{2})^{s} |V_{j}|\geq (\frac{\beta}{2})^{s} N$. We apply the induction hypothesis to find in $G[\bigcup_{i=2}^{a+b} S_{i}]$ a copy of $Q(a-1, b, s, F_{1}, \dots, F_{b})$ which together with $\{v_{1}, \dots, v_{s}\}$ forms a copy of $Q(a, b, s, F_{1}, \dots, F_{b})$ as desired.

Now assume $a=0$ and $b\geq 2$. Recall that $d(V_{i}, V_{j})\geq \frac{1}{2}+\beta$ for distinct $i, j\in [b]$. Inside $V_{1}$, there exists a subset $V_{1}'$ such that every vertex in $V_{1}'$ has at least $(d(V_{1}, V_{j})-\varepsilon)|V_{j}|\geq \left(\frac{1}{2}+\frac{\beta}{2}\right)|V_{j}|$ neighbors in $V_{j}$ for every $j\in [2, b]$ and
\[
|V'_{1}|\geq |V_{1}|-\varepsilon |V_{1}|b=(1-\varepsilon b)|V_{1}|\geq (1-\varepsilon b)N.
\]
Our next step is to embed $Q(a, b, s, F_{1}, \dots, F_{b})$ into $G[V'_{1}\cup V_{2} \cup \cdots \cup V_{b}]$.
Note that for $u, v\in V'_{1}$ and $i\in [2, b]$, we have
\begin{equation}\label{eq1}
|N(u)\cap N(v)\cap V_{i}|\geq \beta|V_{i}|.
\end{equation}
By Proposition \ref{prop2.12}, we have
\[e(V'_{1})\geq \frac{(1-\alpha)}{2\alpha}|V'_{1}|\geq \frac{(1-\alpha)}{2\alpha}(1-\varepsilon b)N.\]
To apply Lemma \ref{lem2.14}, we define for every $i\in [2, b]$ a mapping
\begin{center}
$f_{i}: E(G[V'_{1}])\rightarrow [V_{i}]^{\geq \beta |V_{i}|}$,
\end{center}
by letting $f_{i}(uv):= N(u)\cap N(v)\cap V_{i}$ for every $uv\in E(G[V'_{1}])$. From (\ref{eq1}), it holds that $|f_{i}(uv)|\geq \beta |V_{i}|$ for every $uv\in E(G[V'_{1}])$. Lemma \ref{lem2.14} applied with $r_{1}=r_{2}=2s$ and $c=\beta$ implies that there exist a constant $c'$ and a $(2s, 2s)$-subgraph $H_{1}\subseteq G[V'_{1}]$ such that for $i\in[2, b]$, $|\bigcap_{e\in E(H_{1})} f_{i}(e)|\geq c' |V_{i}|$. By definition, $H_{1}$ contains a subgraph isomorphic to $F_{1}$. Write $S_{i}:= \bigcap_{e\in E(F_{1})} f_{i}(e)$ for each $i\in [2, b]$. Then for every $i\in [2, b]$, we have $|S_{i}| \geq c'|V_{i}|\geq c'N$. By Lemma \ref{lem2.2}, we have that $(S_{i}, S_{j})$ is $\varepsilon'$-regular where $\varepsilon':=\max\left\{2\varepsilon, \frac{\varepsilon}{c'}\right\}\leq \varepsilon_{a+b-1}$ since $\varepsilon\ll \varepsilon_{a+b-1}, c'$ and $d(S_{i}, S_{j})\geq \frac{1}{2}+\frac{\beta}{2}$. By induction hypothesis, $G[\bigcup_{i=2}^{b} S_{i}]$ contains a copy of $Q(a, b-1, s, F_{2}, \dots, F_{b})$ which together with $F_{1}$ forms a copy of $Q(a, b, s, F_{1}, \dots, F_{b})$ as desired.
\end{proof}

\subsubsection{Proof of Lemma \ref{lem2.16}}

\begin{proof}[Proof of Lemma \ref{lem2.16}] Let $r:=ar(H)$ and $\mathcal{T}=\{T_{0}, T_{1}, \dots, T_{r-1}\}$ be an acyclic partition of $H$.
For every $T_{i}\in \mathcal{T}$, $\mathcal{P}_{i}=\{V_{i,1}, V_{i,2}\}$ is a bipartition of $T_{i}$, and $T_{i, j}:=V_{i,j}$ is an independent set in $H$ for $j\in[2]$.
We divide the proof into the following two cases.

If $f(H)=2r$, then we take $s=h$ and construct $F_{1}= F_2=\cdots=F_{b}$ such that $F_{1}$ is a $2h$-forest which consists of two vertex-disjoint copies of $H[T_{j}]$ for each $j\in [0, r-1]$. Recall that $\frac{a}{2}+b=r$. To show that $Q(a, b, h, F_{1}, \dots, F_{b})$ contains an $H$-factor, we build an auxiliary matrix \[A=\{a_{ij}\}_{r\times r}=\begin{pmatrix}
	T_{1} & T_{2} & \dots & T_{0}\\
	T_{2} & T_{3} & \dots & T_{1} \\
    \vdots & \vdots& \ddots & \vdots\\
	T_{0} & T_{1} &\dots & T_{r-1}
\end{pmatrix}\]
where $a_{ij}:=T_{i+j-1\pmod{r}}$ for $i, j\in [r]$. Note that every row (or column) of $A$ corresponds to a permutation of $\mathcal{T}=\{T_{0}, T_{1}, \dots, T_{r-1}\}$. Now we construct two matrices $A_{1}$ and $A_{2}$ as follows by ``cutting'' each of the first $\frac{a}{2}$ columns in half and then swapping columns two by two: \[A_{1}=\{a^{1}_{i, j}\}_{r\times (a+b)}=\begin{pmatrix}
	T_{1,1} & T_{1,2}& \dots & T_{\frac{a}{2},1} & T_{\frac{a}{2},2} & T_{\frac{a}{2}+1} & \dots & T_{0}\\
	T_{2,1} & T_{2,2}& \dots & T_{\frac{a}{2}+1,1} & T_{\frac{a}{2}+1,2} & T_{\frac{a}{2}+2} & \dots & T_{1}\\
    \vdots &\vdots & \ddots &\vdots & \vdots & \vdots & \ddots & \vdots\\
	T_{0,1} & T_{0,2}& \dots & T_{\frac{a}{2}-1,1} & T_{\frac{a}{2}-1,2} & T_{\frac{a}{2}} & \dots & T_{r-1}\\
\end{pmatrix}\]
where $a^{1}_{i, 2j-1}:=T_{i+j-1\pmod{r}, 1}$ and $a^{1}_{i, 2j}:=T_{i+j-1\pmod{r}, 2}$ for each $i\in [r]$ and $j\in[\frac{a}{2}]$; \[A_{2}=\{a^{2}_{i, j}\}_{r\times (a+b)}=\begin{pmatrix}
	T_{1,2} & T_{1,1}& \dots &T_{\frac{a}{2},2} & T_{\frac{a}{2},1} & T_{\frac{a}{2}+1} & \dots & T_{0}\\
	T_{2,2} & T_{2,1}& \dots &T_{\frac{a}{2}+1,2} & T_{\frac{a}{2}+1,1} & T_{\frac{a}{2}+2} & \dots & T_{1}\\
    \vdots &\vdots & \ddots &\vdots & \vdots & \vdots & \ddots & \vdots\\
	T_{0,2} & T_{0,1}& \dots &T_{\frac{a}{2}-1,2} & T_{\frac{a}{2}-1,1} & T_{\frac{a}{2}} & \dots & T_{r-1}\\
\end{pmatrix}\]
where $a^{2}_{i, 2j-1}:=T_{i+j-1\pmod{r}, 2}$ and $a^{2}_{i, 2j}:=T_{i+j-1\pmod{r}, 1}$ for each $i\in [r]$ and $j\in[\frac{a}{2}]$.


Then $A^{\ast}: =\begin{pmatrix}
	 A_{1}\\
     A_{2}
\end{pmatrix}$ is a $2r\times (a+b)$ matrix and observe that for each $i\in[a]$, disjoint union of the elements taken over the $i$th column gives us an $h$-independent set, whilst each of the last $b$ columns of $A^{\ast}$ provides a copy of $F_1$. Clearly, $Q(a, b, h, F_{1}, \dots, F_{b})$ has an $H$-factor with $2r$ vertex-disjoint copies of $H$.

If $f(H)=2r-1$, then we take $s=\frac{2h}{2r-1}$. By the definition of $\widetilde{\mathcal{H}}$, we know that in $\mathcal{T}=\{T_{0}, T_{1}, \dots, T_{r-1}\}$, $T_0$ is an independent set in $H$ and $|T_i|=2|T_0|, i\in[r-1]$. Thus $|T_0|=\frac{h}{2r-1}$ and recall that $\frac{a+1}{2}+b=r$. Now we construct $F_{1},F_2,\ldots ,F_{b}$ such that each $F_{i}$ is a $4|T_0|$-forest which consists of two vertex-disjoint copies of $H[T_{r-b+i-1}]$.

To show that $Q(a, b, \frac{2h}{2r-1}, F_{1}, \dots, F_{b})$ contains an $H$-factor, we build an auxiliary matrix $A=\{a_{1j}\}_{1\times r}=\begin{pmatrix}
	T_{0}&\dots& T_{r-1}
\end{pmatrix}$
where $a_{1j}:=T_{j}$ for each $j\in [0, r-1]$. It holds that $A$ corresponds to an acyclic partition of $H$. Similarly, we construct two matrices $A_{1}$ and $A_{2}$ as follows: \[A_{1}=\{a^{1}_{i, j}\}_{1\times (a+b)}=\begin{pmatrix}
	T_{0} & T_{1,1} & T_{1,2} & \dots & T_{\frac{a-1}{2},1} & T_{\frac{a-1}{2},2} & T_{r-b} & \dots & T_{r-1}
\end{pmatrix},\] \[A_{2}=\{a^{2}_{i, j}\}_{1\times (a+b)}=\begin{pmatrix}
	T_{0} & T_{1,2} & T_{1,1} & \dots & T_{\frac{a-1}{2},2} & T_{\frac{a-1}{2},1} & T_{r-b} & \dots & T_{r-1}
\end{pmatrix}.\]



Let $A^{\ast}: =\begin{pmatrix}
	 A_{1}\\
     A_{2}
\end{pmatrix}$ be a $2\times (a+b)$ matrix. By taking a disjoint union of the elements in the columns as above, each of the first $a$ columns of $A^{\ast}$ provides a $\frac{2h}{2r-1}$-independent set while the last $b$ columns respectively provides copies of $F_i, i\in [b]$. Clearly, $Q(a, b, \frac{2h}{2r-1}, F_{1}, \dots, F_{b})$ has an $H$-factor containing two vertex-disjoint copies of $H$.
\end{proof}

\section{Absorbing}\label{sec5}

In this section, we give a proof of Lemma \ref{lem3.3}. We shall use a result of Nenadov and Pehova \cite{MR4080942} which gives a sufficient condition on the existence of an absorbing set.
\begin{lemma}[\cite{MR4080942}, Lemma 2.2]\label{lem5.1}
Let $H$ be a graph with $h$ vertices, $\gamma>0$ and $t\in \mathbb{N}$ be constants. Then there exists $\xi:=\xi(h, t, \gamma)$ such that the following holds for sufficiently large $n$. Suppose that $G$ is a graph with $n$ vertices such that every $S\in \binom{V(G)}{h}$ has a family of at least $\gamma n$ vertex-disjoint $(H, t)$-absorbers. Then $G$ contains a $\xi$-absorbing set of size at most $\gamma n$.
\end{lemma}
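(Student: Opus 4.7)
The plan is to construct the absorbing set $A$ by the probabilistic method with an alteration step. For each $S \in \binom{V(G)}{h}$, let $\mathcal{A}_S = \{B_S^1, \ldots, B_S^{\gamma n}\}$ be the given vertex-disjoint family of $(H,t)$-absorbers for $S$, and let $\mathcal{A} := \bigsqcup_S \mathcal{A}_S$ denote the corresponding labelled multiset of size $\gamma n \binom{n}{h}$. I would first sample a random sub-family $\mathcal{F} \subseteq \mathcal{A}$ by including each element independently with probability $p = p(\gamma, h, t)$, then delete one absorber per vertex-intersecting pair of distinct absorbers to obtain a vertex-disjoint family $\mathcal{F}^*$, and finally set $A := \bigcup_{B \in \mathcal{F}^*} B$.

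The parameter $p$ must be chosen so that, with positive probability, $\mathcal{F}^*$ satisfies (I) $|\mathcal{F}^*| \cdot ht \leq \gamma n$, and (II) $|\mathcal{F}^* \cap \mathcal{A}_S| \geq \xi n$ for every $S$, where $\xi = \xi(\gamma, h, t) > 0$ is the target absorption constant. Property (I) is controlled by bounding $E[|\mathcal{F}|] = p \gamma n \binom{n}{h}$ and the expected number of vertex-intersecting pairs of distinct absorbers in $\mathcal{F}$; the vertex-disjointness of each $\mathcal{A}_S$ caps the multiplicity of each vertex across $\mathcal{A}$ by $\binom{n}{h}$. Property (II) is a Chernoff concentration statement for the binomial random variable $|\mathcal{F} \cap \mathcal{A}_S|$, combined with a union bound over $\binom{n}{h}$ choices of $S$. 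This calibration is the main obstacle: a naive bound forces $p = O(n^{-h})$ from (I) but requires $p \geq \Omega((\log n)/n)$ from the union bound in (II), which are incompatible for $h \geq 2$. The technical crux is to overcome this tension, likely via the Lov\'{a}sz Local Lemma applied to the locally dependent events ``$|\mathcal{F}^* \cap \mathcal{A}_S| < \xi n$'' (each depending only on the random choices affecting absorbers sharing vertices with those in $\mathcal{A}_S$, a polynomially bounded neighbourhood), or via a two-stage construction in which a random vertex subset $R$ of size $\gamma n$ is extracted first and vertex-disjoint absorbers are carved out of $R$, using Chernoff on the independent indicator events $\{B_S^i \subseteq R\}$ afforded by the disjointness of $\mathcal{A}_S$.

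Once $A$ is in hand, the absorbing property is verified by a matching argument. For any $U \subseteq V(G) \setminus A$ with $|U| \leq \xi n / h$ and $|A \cup U| \in h\mathbb{N}$, one partitions $U$ into $h$-sets $S_1, \ldots, S_m$ and forms the bipartite graph on $\{S_1, \ldots, S_m\} \cup \mathcal{F}^*$ in which $S_j \sim B$ iff $B \in \mathcal{A}_{S_j} \cap \mathcal{F}^*$. Each $S_j$ then has degree at least $\xi n - o(n) \gg m$, so Hall's condition holds on the $\{S_j\}$-side and a matching pairing each $S_j$ with a distinct $B_j \in \mathcal{F}^*$ exists. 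Together with the unmatched absorbers, the $H$-factors of $G[B_j \cup S_j]$ and of $G[B]$ (for $B \in \mathcal{F}^* \setminus \{B_j\}$) assemble into an $H$-factor of $G[A \cup U]$, as desired.
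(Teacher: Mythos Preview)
The paper does not prove this lemma; it is quoted verbatim from Nenadov--Pehova \cite{MR4080942}, so there is no ``paper's own proof'' to compare against. That said, your proposal contains a genuine and fatal gap, not merely a calibration difficulty.

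You ask for a vertex-disjoint family $\mathcal{F}^*$ satisfying simultaneously (I) $|\mathcal{F}^*|\cdot ht\le \gamma n$ and (II) $|\mathcal{F}^*\cap\mathcal{A}_S|\ge \xi n$ for \emph{every} $S\in\binom{V(G)}{h}$. But you have defined $\mathcal{A}=\bigsqcup_S\mathcal{A}_S$ as a \emph{labelled} multiset, so the families $\mathcal{F}^*\cap\mathcal{A}_S$ are pairwise disjoint as subsets of $\mathcal{F}^*$. Summing (II) over all $S$ gives
\[
|\mathcal{F}^*|=\sum_{S}|\mathcal{F}^*\cap\mathcal{A}_S|\ \ge\ \xi n\cdot\binom{n}{h},
\]
while (I) forces $|\mathcal{F}^*|\le \gamma n/(ht)$. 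These are incompatible for any $h\ge 1$ once $n$ is large, so no choice of $p$, no appeal to the Local Lemma, and no two-stage vertex sampling can rescue the scheme as stated. The obstruction is structural, not probabilistic: a linear-size absorbing set simply cannot contain linearly many \emph{dedicated} absorbers for each of the $\binom{n}{h}$ sets $S$.

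The proof in \cite{MR4080942} sidesteps this by abandoning the requirement that every $S$ have many absorbers inside $A$. Instead it uses Montgomery's robustly matchable bipartite template: a bounded-degree bipartite graph $T$ on parts $X$ and $Y\cup Z$ with $|X|=3m$, $|Y|=2m$, $|Z|=m+\xi n$, such that for every $Z'\subseteq Z$ of size $m$ the graph $T[X,\,Y\cup Z']$ has a perfect matching. One fixes vertex-disjoint $h$-sets in $G$ playing the roles of the template vertices in $X\cup Y$, lets $Z$ index the $h$-sets to be absorbed, and greedily realises each template edge by a fresh absorber (here the linear supply of vertex-disjoint absorbers per $S$ is exactly what is needed, since the template has bounded degree and only $O(m)$ edges). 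The flexibility in $Z$ then yields the absorption property. Your final Hall-type matching step is in the right spirit, but it must be run against a sparse template of this kind rather than against the full family $\mathcal{F}^*$.
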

So the key point in the proof of Lemma \ref{lem3.3} is to build linearly many vertex-disjoint absorbers for every $S\in \binom{V(G)}{h}$.
To achieve this, we employ the latticed-based absorbing method \cite{HMWY2021} and we first need the notion of $H$-reachability from \cite{HMWY2021} which originates in \cite{MR3338027}.

\begin{defn}
  Let $G, H$ be given as aforementioned and $m, t\in \mathbb{N}$. Then we say that two vertices $u, v\in V(G)$ are \emph{$(H, m, t)$-reachable} (in $G$) if for any vertex set $W$ of $m$ vertices, there is a set $S\subseteq V(G)\backslash W$ of size at most $ht-1$ such that both $G[S\cup \{u\}]$ and $G[S\cup \{v\}]$ have $H$-factors, where we call such $S$ an \emph{$H$-connector} for $u, v$. Moreover, a set $U\subseteq V(G)$ is \emph{$(H, m, t)$-closed} if every two vertices $u, v\in U$ are $(H, m, t)$-reachable, where the corresponding $H$-connector for $u, v$ may not be contained in $U$. If two vertices $u, v\in V(G)$ are $(H, m, 1)$-reachable, then we say $u$ is \emph{1-reachable} to $v$. If $u, v\in U$ are $(H, m, t)$-reachable, and the corresponding $H$-connector for $u, v$ is contained in $U$, then we say that $u, v\in U$ are \emph{$(H, m, t)$-inner-reachable}. Similarly, we can define \emph{$(H, m, t)$-inner-closed} and \emph{1-inner-reachable}.
\end{defn}



The following result from \cite{HMWY2021} builds a sufficient condition to ensure that every subset $S\subseteq V(G)$ with $|S|=h$ has linearly many vertex-disjoint absorbers.

\begin{lemma}[\cite{HMWY2021}, Lemma 3.9]\label{lem5.2}
Given $\beta>0$, $t, h\in \mathbb{N}$ with $h\geq 3$ and an $h$-vertex graph $H$, the following holds for sufficiently large $n\in \mathbb{N}$. Let $G$ be an $n$-vertex graph such that $V(G)$ is $(H, \beta n, t)$-closed. Then every $S\in \tbinom{V(G)}{h}$ has a family of at least $\frac{\beta}{h^{2}t}n$ vertex-disjoint $(H, t)$-absorbers.
\end{lemma}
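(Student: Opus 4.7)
The plan is to fix an arbitrary $h$-set $S=\{s_1,\dots,s_h\}\in\binom{V(G)}{h}$ and build vertex-disjoint $(H,t)$-absorbers for $S$ one at a time by a greedy process, driven entirely by the closedness hypothesis. Inductively suppose we have already produced $k$ such absorbers $A^{(1)},\dots,A^{(k)}$, pairwise vertex-disjoint and disjoint from $S$, with $k<\beta n/(h^2 t)$. Let $W:=S\cup\bigcup_{j\le k}A^{(j)}$. A calibration check (spelled out below) shows $|W|<\beta n$ for $n$ large, so the $(H,\beta n,t)$-closedness can still be invoked with $W$ (and modest enlargements of it) playing the role of the forbidden set.

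To produce the $(k+1)$-th absorber, first pick $v_1,\dots,v_h\in V(G)\setminus W$ spanning a copy of $H$. Such a copy exists because $V(G)\setminus W$ has $(1-o(1))n$ vertices and $G$ contains many copies of $H$ (a consequence of closedness applied to a single pair and iterated, combined with $\alpha(G)\le\alpha n$). Next, for $i=1,2,\dots,h$ in turn, invoke the $(H,\beta n,t)$-reachability of the pair $(s_i,v_i)$ with forbidden set $W\cup\{v_1,\dots,v_h\}\cup\bigcup_{j<i}C_j$ (whose size is still below $\beta n$) to obtain an $H$-connector $C_i$, disjoint from the previous choices, such that both $G[C_i\cup\{s_i\}]$ and $G[C_i\cup\{v_i\}]$ admit $H$-factors. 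Define
\[
A^{(k+1)}:=\{v_1,\dots,v_h\}\cup\bigcup_{i=1}^{h}C_i.
\]
To verify the absorber property, note that $G[A^{(k+1)}]$ decomposes into the pieces $G[C_i\cup\{v_i\}]$ for $i\in[h]$, each carrying an $H$-factor, while $G[A^{(k+1)}\cup S]$ decomposes into the copy of $H$ on $\{v_1,\dots,v_h\}$ together with the pieces $G[C_i\cup\{s_i\}]$ for $i\in[h]$, again each carrying an $H$-factor. Hence $A^{(k+1)}$ absorbs $S$ as required.

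The main obstacles are purely in the bookkeeping. First, one must maintain that the cumulative forbidden set stays below $\beta n$ throughout the greedy process, so that the reachability assertion can be applied repeatedly; this is exactly why the counting threshold is $\beta n/(h^2 t)$ rather than $\beta n/(ht)$, since a single iteration may consume up to $h(ht-1)+h=h^2 t$ fresh vertices in total across the copy of $H$ on $\{v_i\}$ and the $h$ connectors $C_i$. Second, one must locate the initial copy of $H$ in $V(G)\setminus W$ at every step and arrange the successive $C_1,\dots,C_h$ to be mutually disjoint — both are handled automatically by the ``for every forbidden set of size $m$'' formulation of reachability, since at each sub-step the union of vertices chosen so far constitutes a valid forbidden set of size well under $\beta n$. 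With these points controlled, the greedy iteration runs for the full $\lfloor\beta n/(h^2 t)\rfloor$ rounds and produces the desired family of vertex-disjoint $(H,t)$-absorbers for $S$.
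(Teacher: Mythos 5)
Your construction is the intended one for this lemma (which the paper only cites from \cite{HMWY2021}): greedily build absorbers for $S$, each consisting of a copy of $H$ on fresh vertices $\{v_1,\dots,v_h\}$ together with connectors $C_i$ joining $s_i$ to $v_i$, keeping the union of used vertices below $\beta n$ so that $(H,\beta n,t)$-reachability can be invoked at every step; the two factor decompositions of $G[A^{(k+1)}]$ and $G[A^{(k+1)}\cup S]$ are exactly right, and your accounting of roughly $h^2t$ vertices per round is precisely where the bound $\frac{\beta}{h^2t}n$ comes from.

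Three points should be tightened. First, and most substantively, under this paper's definition an $(H,t)$-absorber must have size \emph{exactly} $ht$, whereas your sets have size $h+\sum_i|C_i|$, which varies and can be as large as $h^2t$; as written they are $(H,t')$-absorbers for assorted $t'\le ht$, not $(H,t)$-absorbers, and you assert the stronger label without comment. You can make the sizes uniform by padding each connector with vertex-disjoint copies of $H$ up to exactly $ht-1$ vertices (closedness supplies copies of $H$ avoiding any forbidden set of size at most $\beta n$: take a connector for any pair outside the forbidden set), which gives every absorber size exactly $h^2t$ --- but nothing produces size exactly $ht$ from closedness alone. This mismatch is really a normalization slip in the paper's transcription of the cited definitions (the count $\frac{\beta}{h^2t}n$ itself presupposes absorbers of up to $h^2t$ vertices, and a uniform size bound is all that Lemma \ref{lem5.1} requires), but a correct write-up must either pad to a fixed size and state the conclusion with that size, or flag the discrepancy explicitly. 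Second, the appeal to $\alpha(G)\le\alpha n$ when locating the initial copy of $H$ is out of place: that is not a hypothesis of this lemma, and it is also unnecessary, since the connector argument just described already yields a copy of $H$ in $V(G)\setminus W$. Third, your calibration ``$|W|<\beta n$'' overshoots by an additive constant (up to $h+h^2t$) in the final round, and producing $\lfloor\beta n/(h^2t)\rfloor$ absorbers does not literally give ``at least $\frac{\beta}{h^2t}n$'' when that quantity is not an integer; this is harmless for the application (the paper only uses the weaker bound $\frac{\beta}{2h^2t}n$), but it should be acknowledged and absorbed into the constants rather than claimed away.
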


Based on this lemma, it suffices to show that $V(G)$ is closed. However, we are only able to prove a slightly weaker result which states that the graph $G$ admits a vertex partition $V(G)=B\cup U$ where $B$ is a small vertex set and $U$ is inner-closed.

\begin{lemma}[]\label{lem7.1}
Given $h\in \mathbb{N}$ with $h\geq 3$, an $h$-vertex graph $H$ and constants $\tau, \mu$ with $0<\tau<\mu$, there exist positive constants $\alpha, \beta$ and $t\in \mathbb{N}$ such that the following holds for sufficiently large $n$. Let $G$ be an $n$-vertex graph with $\delta(G)\geq \max\left\{\left(1-\frac{2}{f(H)}+\mu\right)n, \left(\frac{1}{2}+\mu\right)n\right\}$ and $\alpha(G)\leq \alpha n$. Then $G$ admits a partition $V(G)=B\cup U$ with $|B|\leq \tau n$ and $U$ is $(H, \beta n, t)$-inner-closed.
\end{lemma}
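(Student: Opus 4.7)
The plan is to follow the lattice-based absorption framework of \cite{MR3632565, HMWY2021}. The proof naturally splits into four steps.

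First, I would prove a one-step reachability property: there exist constants $\beta_0, \delta_0 > 0$ such that for every $v \in V(G)$, at least $\delta_0 n$ vertices $u \in V(G)$ are $(H, \beta_0 n, 1)$-reachable to $v$. Fix a vertex $\phi \in V(H)$ and note that witnessing 1-reachability between $u$ and $v$ amounts to exhibiting $\Omega(n)$ vertex-disjoint copies of $H - \phi$ in $G$ such that both $u$ and $v$ are adjacent to the appropriate images of $N_H(\phi)$ in each copy. Using $\delta(G) \geq (\tfrac{1}{2} + \mu)n$ gives $|N(u) \cap N(v)| \geq 2\mu n$, and combined with $\alpha(G) \leq \alpha n$ one can iteratively extract copies of $H - \phi$ in this common neighborhood (or apply Lemma~\ref{lem3.1} after suitable restriction) to obtain the required count for all $v$ and linearly many $u$.

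Second, I would invoke a partitioning lemma from the lattice-based absorption toolkit, analogous to the closure arguments of \cite{MR3632565, HMWY2021}: the one-step reachability from Step 1 automatically upgrades to a partition $V(G) = V_1 \cup \cdots \cup V_k$ with $k$ bounded by a function of $1/\delta_0$, each $|V_i|$ either zero or at least $\delta_0 n / 2$, and any two vertices inside the same $V_i$ being $(H, \beta_1 n, t_0)$-reachable for some constants $t_0 \in \mathbb{N}$ and $\beta_1 > 0$.

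Third, and this is the main obstacle, I would show that at most one class $V_i$ can be large. For each copy of $H$ in $G$, record its index vector in $\mathbb{Z}^k$ counting how many vertices it uses in each $V_i$. The degree condition $\delta(G) \geq (1 - 2/f(H) + \mu)n$, combined with repeated application of Lemma~\ref{lem3.1} after removing one class at a time, should allow one to produce copies of $H$ with sufficiently diverse index vectors that the lattice they span contains every difference $e_i - e_j$ indexed by large classes. This would force pairwise reachability between any two large classes, contradicting the maximality of the partition if more than one large class existed. Here the precise value $f(H) = 2\,ar(H)$ or $2\,ar(H) - 1$ enters through the chosen acyclic partition of $H$: in the $\widetilde{\mathcal{H}}$ case one exploits the extra symmetry $2|T_1| = |T_i|$ to produce the unit-difference index vectors, while in the generic case a direct acyclic partition suffices.

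Finally, let $B$ be the union of the classes of size less than $\tau n / k$ together with any negligible residue from Step 3, and set $U := V(G) \setminus B$; then $|B| \leq \tau n$. Since the slack $\beta_1 n$ in the reachability definition is much larger than $|B|$, any connector whose existence is guaranteed by Steps 2 and 3 can be chosen inside $U$ by simply forbidding the few vertices of $B$, so $U$ is $(H, \beta n, t)$-inner-closed for appropriate constants $\beta, t$.
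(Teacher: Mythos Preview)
Your overall plan --- establish local $1$-reachability, partition into closed classes via a lemma of the type of Lemma~\ref{lem5.8}, then merge using transferrals --- is exactly the paper's two-step reduction to Lemmas~\ref{lem5.5} and~\ref{lem5.4}. The serious gap is in Step~1. Knowing only that $|N(u)\cap N(v)|\ge 2\mu n$ and $\alpha(G)\le\alpha n$ does \emph{not} let you embed $H-\phi$ in the common neighbourhood: when $H=K_h$ you are asking for a copy of $K_{h-1}$ in a graph on $2\mu n$ vertices with independence number at most $\alpha n$, and for $h\ge 4$ the Ramsey lower bound $R(h-1,s)\gg s$ shows this is not a valid deduction. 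Invoking Lemma~\ref{lem3.1} does not rescue the step either, since $G[N(u)\cap N(v)]$ inherits no minimum-degree condition. This is precisely why the paper's Lemma~\ref{lem5.5} goes through the regularity lemma: the bad set $B$ consists of the (at most $\tau n$) vertices that are atypical with respect to the regular partition --- those with too few neighbours into some cluster of their assigned $K_{f(H)+1}$-embeddable structure --- and for any two typical vertices $u,v$ in the same cluster the connector is built by applying the embedding lemma (Corollary~\ref{coro2.17}) inside the regular-pair structure restricted to $N(u)\cap N(v)$. In particular $B$ is fixed \emph{before} the closure argument, not assembled afterwards from leftover small classes as in your Step~4; indeed Lemma~\ref{lem5.8} already forces every class to have size at least $\delta_0 n/2$, so there are no small classes to discard.

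Your Step~3 has the right shape but is likewise underspecified: applying Lemma~\ref{lem3.1} after deleting a class yields tilings, not copies of $H$ with prescribed index vectors, so it is unclear how you obtain any $\textbf{u}_i-\textbf{u}_j$. The paper's transferral lemma (Lemma~\ref{lem5.10}) again works through the reduced graph, locating a \emph{crossing} $K_{f(H)+1}$-embeddable structure via a case analysis on $f(H)$, and then uses Lemma~\ref{lem2.17} and Corollary~\ref{coro2.17} to produce two copies of $H$ whose index vectors differ by exactly $\textbf{u}_i-\textbf{u}_j$.
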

Clearly, we shall focus on the subgraph $G[U]$ and obtain an absorbing set by applying Lemma~\ref{lem5.2} and Lemma~\ref{lem5.1} on $G[U]$.

The next step is to deal with the vertex set $B$. We shall pick mutually vertex-disjoint copies of $H$ each covering a vertex in $B$. To achieve this, we use the following result which enables us to find linearly many copies of $H$ covering any given vertex.

\begin{lemma}[]\label{lem5.7}
Given $h\in \mathbb{N}$ with $h\geq 3$, an $h$-vertex graph $H$ and a constant $\mu>0$, there exists $\alpha>0$ such that the following holds for sufficiently large $n$. Let $G$ be an $n$-vertex graph with $\delta(G)\geq \max\left\{\left(1-\frac{2}{f(H)}+\mu\right)n, \left(\frac{1}{2}+\mu\right)n\right\}$ and $\alpha(G)\leq \alpha n$. If $W$ is a subset of $V(G)$ with $|W|\leq \frac{\mu}{2}n$, then there exists at least one copy of $H$ covering $v$ in $G-W$ for each $v\in V(G)\backslash W$.
\end{lemma}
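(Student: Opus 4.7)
The plan is to combine the regularity-based machinery from Section~\ref{sec3} --- specifically Lemma~\ref{lem4.4} together with Corollary~\ref{coro2.17} --- to embed a copy of $H$ containing the prescribed vertex $v$ into the neighbourhood of $v$, and then to use that copy directly.

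First, I would set up the regularity framework. Choose parameters $\alpha\ll \varepsilon\ll \beta\ll \eta\ll \mu$ and apply Lemma~\ref{lem2.3} to $G-W$, obtaining a regular partition $V(G)\setminus W=V_0\cup V_1\cup\cdots\cup V_k$ with cluster size $m$ and reduced multigraph $R:=R_{\beta,\varepsilon}$. An adaptation of Fact~\ref{fact2.5} to $G-W$ gives $\delta(R)\geq 2(1-2/f(H)+\mu/3)k$, and Lemma~\ref{lem4.4} then yields a fractional $\mathcal{F}(R,f(H))$-tiling $\omega$ with $\omega(R)\geq (1-\eta)k$.

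Second, I would identify the clusters where $v$ has substantial neighbourhood. Define $N^{\ast}(v):=\{V_j\in V(R):|N_G(v)\cap V_j|\geq \beta m\}$. A direct averaging, using $|N_G(v)\setminus W|\geq \delta(G)-|W|$ and the fact that each cluster outside $N^{\ast}(v)$ contains fewer than $\beta m$ neighbours of $v$, gives
\[
|N^{\ast}(v)|\geq \max\left\{\left(1-\tfrac{2}{f(H)}+\tfrac{\mu}{3}\right)k,\ \left(\tfrac{1}{2}+\tfrac{\mu}{3}\right)k\right\}.
\]
The crux of the proof is then to locate a structure $\mathcal{K}\in\mathcal{F}(R,f(H))$ with $\omega(\mathcal{K})>0$ whose clusters all lie in $N^{\ast}(v)$. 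I would argue by contradiction: if every positive-weight $\mathcal{K}$ intersects $V(R)\setminus N^{\ast}(v)$, then combining the identities $\omega(R)=f(H)\sum_{\mathcal{K}}\omega(\mathcal{K})\geq(1-\eta)k$ with $\sum_{V_j\notin N^{\ast}(v)}\omega(V_j)\leq |V(R)\setminus N^{\ast}(v)|$ and both alternative lower bounds on $|N^{\ast}(v)|$ yields a contradiction after case analysis on $f(H)$. The case $f(H)=2$ is immediate from the $(\tfrac{1}{2}+\tfrac{\mu}{3})k$ bound; the case $f(H)\geq 3$ needs the $(1-2/f(H)+\mu/3)k$ bound together with a refined accounting that weights each $\mathcal{K}$ by its total multi-embedding index $i_{\mathcal{K}}(\cdot)$ falling outside $N^{\ast}(v)$, so that structures hitting $V(R)\setminus N^{\ast}(v)$ in a ``double'' cluster contribute at least $2$ to the weight sum.

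Finally, with $\mathcal{K}$ in hand, write $V(\mathcal{K})=\{V_{j_1},\ldots,V_{j_{a+b}}\}$ with $a+2b=f(H)$, and set $V'_{j_\ell}:=N_G(v)\cap V_{j_\ell}$, each of size at least $\beta m$. By Lemma~\ref{lem2.2}, these subclusters remain $(\varepsilon/\beta)$-regular with densities close to the originals, so the hypotheses of Corollary~\ref{coro2.17} are met. Applying Corollary~\ref{coro2.17} produces a copy of $Q(a,b)$ entirely inside $\bigcup_\ell V'_{j_\ell}\subseteq N_G(v)\setminus W$, which by Lemma~\ref{lem2.16} contains an $H$-factor and in particular an $H$-copy $H^{\ast}$ on $h$ vertices $w_1,\ldots,w_h$, all of which are neighbours of $v$. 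Replacing any single vertex $w_i$ of $H^{\ast}$ by $v$ then gives a copy of $H$ in $G-W$ containing $v$: every edge of $H$ incident to $v$ in the new copy is present because $v$ is adjacent to every vertex of $H^{\ast}$, and the remaining edges are inherited from $H^{\ast}$.

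The main obstacle is the third step above --- extracting a $K_{f(H)}$-embeddable structure $\mathcal{K}$ entirely inside $N^{\ast}(v)$. The naive weight-balancing argument succeeds transparently for $f(H)=2$, but for $f(H)\geq 3$ it only gives the weaker conclusion that some cluster of $\mathcal{K}$ lies in $N^{\ast}(v)$, rather than all of them; the correct bookkeeping must track the multi-embedding indices $i_{\mathcal{K}}(V_j)\in\{1,2\}$ and use the $(1-2/f(H)+\mu)n$ branch of the minimum-degree hypothesis --- which is precisely why $f(H)$ appears in the statement.
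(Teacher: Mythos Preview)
Your third step has a genuine gap that you yourself flag but do not resolve. The claim that some positive-weight $\mathcal{K}\in\mathcal{F}(R,f(H))$ lies \emph{entirely} inside $N^{\ast}(v)$ is simply too strong to be extractable from the fractional tiling. Concretely, for $f(H)\ge 3$ the weight-balancing you sketch yields at best
\[
\sum_{V_j\notin N^{\ast}(v)}\omega(V_j)\ \ge\ \sum_{\mathcal{K}:\,\omega(\mathcal{K})>0}\omega(\mathcal{K})\cdot\Bigl(\sum_{V_j\notin N^{\ast}(v)}i_{\mathcal{K}}(V_j)\Bigr),
\]
and comparing with $|V(R)\setminus N^{\ast}(v)|\le (2/f(H)-\mu/3)k$ only forces some $\mathcal{K}$ to meet $V(R)\setminus N^{\ast}(v)$ with total index at most~$1$, not~$0$. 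So you cannot guarantee that every cluster of $\mathcal{K}$ is in $N^{\ast}(v)$, and hence your final swap argument---which needs $V(H^{\ast})\subseteq N(v)$---breaks down.

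The paper avoids this by invoking Lemma~\ref{lem5.12} (from Knierim--Su) in place of the fractional tiling: this lemma directly produces, for each vertex $v$, a $K_{f(H)}$-embeddable structure $\mathcal{K}$ with at most one cluster $V_\ell$ outside $Q_v$, and crucially $i_{\mathcal{K}}(V_\ell)=1$. The paper then handles the two cases separately. When $V(\mathcal{K})\subseteq Q_v$, the argument is your final step. When one cluster $V_\ell$ is outside $Q_v$, one applies Corollary~\ref{coro2.17} with $S_i=N(v)\cap V_i$ for $i\neq\ell$ but $S_\ell=V_\ell$ itself; because $i_{\mathcal{K}}(V_\ell)=1$, the resulting copy of $Q(a,b)$ places an \emph{independent set} in $V_\ell$, and one swaps $v$ for a vertex of that independent set. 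Then $v$'s neighbours in the copy of $H$ all lie in $\bigcup_{i\neq\ell}S_i\subseteq N(v)$, so the swap is valid. Your proposal is missing both ingredients: the structural input (Lemma~\ref{lem5.12}) that controls how $\mathcal{K}$ meets $Q_v$, and the case analysis that handles a single bad cluster via the independent-set position in $Q(a,b)$. The fractional tiling from Lemma~\ref{lem4.4} is not needed here at all.
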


Now we give the proof of Lemma \ref{lem3.3} by using Lemma \ref{lem5.1}, Lemma \ref{lem5.2}, Lemma \ref{lem7.1} and Lemma \ref{lem5.7}.


\begin{proof} [Proof of Lemma \ref{lem3.3}]
Let $h\in \mathbb{N}$, $H$ be an $h$-vertex graph and constants $\gamma, \mu$ with $0<\gamma\leq \frac{\mu}{2}$. Then we shall choose $\tau=\frac{\gamma}{2h}$ and
\begin{center}
$\frac{1}{n}\ll \alpha\ll \xi\ll \frac{1}{t}, \beta\ll \gamma, \mu$.
\end{center}
Let $G$ be an $n$-vertex graph with $\delta(G)\geq \max\left\{\left(1-\frac{2}{f(H)}+\mu\right)n, \left(\frac{1}{2}+\mu\right)n\right\}$ and $\alpha(G)\leq \alpha n$.
Applying Lemma \ref{lem7.1} on $G$, we obtain that $G$ admits a vertex partition $V(G)=B\cup U$ such that $|B|\leq \tau n$ and $U$ is $(H, \beta n, t)$-inner-closed.
Applying Lemma \ref{lem5.2} on $G[U]$, it holds that every $S\in \tbinom{U}{h}$ has a family of at least $\frac{\beta}{h^{2}t}|U|\geq \frac{\beta}{2h^{2}t}n$ vertex-disjoint $(H, t)$-absorbers. Applying Lemma \ref{lem5.1} on $G[U]$ where $\frac{\gamma}{2}$ plays the role of $\gamma$, we obtain a $\xi$-absorbing set $A_{1}$ in $G[U]$ of size at most $\frac{\gamma}{2}n$.

Now we shall iteratively pick vertex-disjoint copies of $H$ each covering one vertex in $B$ while avoiding using any vertex in $A_{1}$, and we claim that every vertex in $B$ can be covered in this way. Let $G_{2}:= G-A_{1}$. For each $v\in B$, we apply Lemma \ref{lem5.7} iteratively to find a copy of $H$ covering $v$ in $G_{2}$, while avoiding $A_{1}$ and all copies of $H$ found so far. This is possible as during the process the number of vertices that we need to avoid is at most

\begin{center}
$h|B|+|A_{1}|\leq h\tau n+ \frac{\gamma}{2}n=\gamma n\leq \frac{\mu}{2}n$.
\end{center}

Let $W$ be the union of the vertex sets over all the $|B|$ vertex-disjoint copies of $H$ as above and $A:= A_{1}\cup W$. Recall that $A_{1}$ is a $\xi$-absorbing set for $G[U]$, and $G[W]$ has an $H$-factor. Thus $A$ is a $\xi$-absorbing set for $G$ with $|A|\leq \gamma n$.
\end{proof}

Now it remains to prove Lemma \ref{lem7.1} and Lemma \ref{lem5.7} whose proofs will be given in next two subsections respectively.

\subsection{Proof of Lemma \ref{lem7.1}}
To prove Lemma \ref{lem7.1}, we divide the proof into two steps (i): $G$ admits a partition $V(G)=B\cup U$ where $B$ is a small vertex set and every vertex in $U$ is $1$-inner reachable to linearly many vertices; (ii): $U$ is inner-closed. 

The following result is the first step.

\begin{lemma}[]\label{lem5.5}
Given $h\in \mathbb{N}$ with $h\geq 3$, an $h$-vertex graph $H$ and constants $\tau, \mu$ with $0<\tau<\mu$, there exist positive constants $\alpha, \beta_{1}, \gamma_{1}$ such that the following holds for sufficiently large $n$. Let $G$ be an $n$-vertex graph with $\delta(G)\geq \max\left\{\left(1-\frac{2}{f(H)}+\mu\right)n, \left(\frac{1}{2}+\mu\right)n\right\}$ and $\alpha(G)\leq \alpha n$. Then $G$ admits a vertex partition $V(G)=B\cup U$ such that $|B|\leq \tau n$ and every vertex in $U$ is $(H, \beta_{1}n, 1)$-inner-reachable to at least $\gamma_{1}n$ other vertices in $G[U]$.
\end{lemma}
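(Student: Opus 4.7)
The plan is to apply Szemer\'edi's regularity lemma together with the fractional tiling of Lemma~\ref{lem4.4} to $G$, and then produce $1$-inner-reachable pairs via a typicality-based swap argument inside a single cluster.

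First I would apply Lemma~\ref{lem2.3} to $G$ with $\mu\gg\beta\gg\varepsilon$ and $\varepsilon$ chosen so small that $\varepsilon N(\varepsilon)\ll\tau/h$, obtaining a regular partition $V_0\cup V_1\cup\cdots\cup V_k$. Fact~\ref{fact2.5} gives the reduced multigraph $R=R_{\beta,\varepsilon}$ the required minimum degree, so Lemma~\ref{lem4.4} (with $\eta\ll\tau$) returns a fractional $\mathcal{F}(R,f(H))$-tiling $\omega$ with $\omega(R)\ge(1-\eta)k$. I would then define the bad set $B$ to be the union of $V_0$, all clusters $V_i$ with $\omega(V_i)<1-\sqrt{\eta}$, and the atypical vertices $v\in V_i$ in the surviving clusters (those with $|N(v)\cap V_j|<(d(V_i,V_j)-\varepsilon)|V_j|$ for some regular pair $(V_i,V_j)$ of positive density). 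An averaging bound gives at most $\sqrt{\eta}\,k$ low-weight clusters, and the $\varepsilon$-regularity definition bounds the atypical vertices by $k\varepsilon n$, so $|B|\le\tau n$. Set $U:=V(G)\setminus B$.

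Fix any $v\in V_i\cap U$. Since $\omega(V_i)\ge 1-\sqrt{\eta}$, there is some $\mathcal{K}\in\mathcal{F}(R,f(H))$ with $\omega(\mathcal{K})>0$ and $i_{\mathcal{K}}(V_i)\ge 1$; write $a+b=|V(\mathcal{K})|$ and $a+2b=f(H)$. Slicing each cluster of $\mathcal{K}$ down to its intersection with $U$ preserves regularity (Lemma~\ref{lem2.2}), and a mild modification of Corollary~\ref{coro2.17} that allows the $V_i$-slot to be prescribed -- obtained by running the induction in the proof of Lemma~\ref{lem2.17} inside the common neighborhoods of $v$, which stay dense and regular by the typicality of $v$ -- embeds an $H$-copy $H_0=S\cup\{v\}\subseteq U$ with $v$ playing some role $w\in V(H)$. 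Writing $s_y\in S$ for the vertex assigned to each $y\in V(H)\setminus\{w\}$, the swap candidate set $X(S):=\bigcap_{y\in N_H(w)}N_G(s_y)\cap V_i\cap U$ has size at least $(\beta/2)^{h}|V_i|$ by iterated applications of the Slicing Lemma and the typicality of each $s_y\in U$. Since $v\in X(S)$ by construction, every $u\in X(S)\setminus\{v\}$ is $1$-reachable to $v$ via the connector $S\subseteq U$.

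The main obstacle is promoting bare reachability to $(H,\beta_1 n,1)$-inner-reachability, which requires, for each partner $u$ of $v$, roughly $\beta_1 n$ vertex-disjoint valid connectors $S$. This will be handled by repeating the above embedding $M=\Theta\bigl(\beta_1 n/(\beta/2)^{h}\bigr)$ times inside $\mathcal{K}\cap U$, each time deleting the previously used vertices (only an $o(1)$-fraction of each slice is removed, so Lemma~\ref{lem2.2} keeps the pairs $\varepsilon'$-regular) while holding $v$ fixed in the $V_i$-slot. A standard double-counting over the resulting swap sets $X(S^{(1)}),\ldots,X(S^{(M)})$ then shows that at least $(\beta/2)^{h}|V_i|/2\ge\gamma_1 n$ vertices $u\in V_i\cap U$ each lie in $\ge\beta_1 n$ of these sets, hence have $\beta_1 n$ vertex-disjoint witnesses. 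Choosing $\gamma_1:=(\beta/2)^{h}/(4N(\varepsilon))$ and $\beta_1\ll(\beta/2)^{h}/h$ yields the required parameters; a secondary technicality concerns the case $i_{\mathcal{K}}(V_i)=2$, where $V_i$ appears twice in $\mathcal{K}$, but the same swap argument applies to each slot with only bookkeeping changes.
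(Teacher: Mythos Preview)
Your proposal has a genuine gap in the case $i_{\mathcal{K}}(V_i)=2$, which you dismiss as ``bookkeeping''. When $V_i$ occupies a forest slot of $Q(a,b)$, the role $w$ that $v$ plays in the extracted copy of $H$ has $H$-neighbours $y$ whose images $s_y$ lie in $V_i$ itself (namely, $v$'s neighbours in the embedded forest). Your lower bound $|X(S)|\ge (\beta/2)^{h}|V_i|$ is obtained by iterated slicing and typicality, but typicality for $s_y\in U$ only controls $|N(s_y)\cap V_j|$ for $j\ne i$; regularity says nothing about $|N(s_y)\cap V_i|$, and an individual vertex $s_y$ may have arbitrarily few neighbours inside its own cluster. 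Thus $X(S)$ can be tiny and the subsequent double-count collapses. Nor can you simply avoid this case: the fractional tiling returned by Lemma~\ref{lem4.4} gives no guarantee that some $\mathcal{K}$ of positive weight covers $V_i$ with $i_{\mathcal{K}}(V_i)=1$.

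The paper resolves this by replacing Lemma~\ref{lem4.4} with Lemma~\ref{lem5.11}: every cluster lies in a $K_{f(H)+1}$-embeddable structure $\mathcal{K}_q$, and deleting one vertex of $K_{f(H)+1}$ from the preimage of $V_i$ yields a $K_{f(H)}$-embeddable substructure $\mathcal{K}'_q$ with $i_{\mathcal{K}'_q}(V_i)=1$. Now $V_i$ is always an independent-set slot, so for a typical $u\in V_i$ the sets $N_j:=N(u)\cap N(v)\cap U_j$, $j\ne i$, have size $\ge(\beta/2)^2 m$; after deleting any prescribed $W$ of size $\beta_1 n$ one applies Corollary~\ref{coro2.17} directly to $\bigcup_j N_j'$ and reads off a connector $S$ avoiding $W$. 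This yields $(H,\beta_1 n,1)$-reachability for the pair $u,v$ in one stroke --- no repeated embedding or double-counting is needed --- and the within-cluster adjacency issue never arises. Incidentally, restricting the ``atypical'' set $B$ to bad degrees relative to the bounded vertex set $V(\mathcal{K}_q)$, rather than to all $k$ clusters as you do, is what keeps $|B|\le\tau n$ without having to force the delicate inequality $\varepsilon N(\varepsilon)\ll\tau$.
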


In the second step, we only need to apply the following result on $G[U]$.

\begin{lemma}[]\label{lem5.4}
Given $h\in \mathbb{N}$ with $h\geq 3$, an $h$-vertex graph $H$ and constants $\mu, \beta_{1}, \gamma_{1}$ with $0<\mu, \beta_{1}, \gamma_{1}<1$, there exist positive constants $\alpha, \beta$ and $t\in \mathbb{N}$ such that the following holds for sufficiently large $n$. Let $G$ be an $n$-vertex graph with $\delta(G)\geq \max\left\{\left(1-\frac{2}{f(H)}+\mu\right)n, \left(\frac{1}{2}+\mu\right)n\right\}$ and $\alpha(G)\leq \alpha n$ such that every vertex in $V(G)$ is $(H, \beta_{1}n, 1)$-reachable to at least $\gamma_{1}n$ other vertices. Then $V(G)$ is $(H, \beta n, t)$-closed.
\end{lemma}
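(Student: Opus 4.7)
The plan is to adapt the standard ``reachability implies closure'' framework of the lattice-based absorbing method, as used in~\cite{HMWY2021} and~\cite{CHKWY}. Define the auxiliary graph $R$ on $V(G)$ where $uv\in E(R)$ if and only if $u$ and $v$ are $(H,\beta_{1}n,1)$-reachable. The hypothesis forces $\delta(R)\geq \gamma_{1}n$, so each component of $R$ contains at least $\gamma_{1}n+1$ vertices, and $R$ has at most $\lfloor 1/\gamma_{1}\rfloor$ components.

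First I would show that any two vertices in the same component of $R$ are $(H,\beta n,t_{0})$-reachable, for $t_{0}:=\lceil 1/\gamma_{1}\rceil$ and any $\beta\leq \beta_{1}/2$. Given a path $u=u_{0},u_{1},\ldots,u_{k}=v$ of length $k\leq t_{0}$ in $R$, I greedily pick pairwise disjoint $(h-1)$-connectors $S_{1},\ldots,S_{k}$, each $S_{i}$ witnessing $u_{i-1}\sim_{1}u_{i}$ and chosen to avoid $W\cup\{u_{0},\ldots,u_{k}\}\cup\bigcup_{j<i}S_{j}$; the greedy choice succeeds because the forbidden set has size $O(t_{0}h+\beta n)\ll \beta_{1}n$ while there are at least $\gamma_{1}n$ candidates at each step. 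Setting $S:=\bigcup_{i=1}^{k}S_{i}\cup\{u_{1},\ldots,u_{k-1}\}$, we get $|S|=kh-1\leq ht_{0}-1$, and $G[S\cup\{u\}]$ and $G[S\cup\{v\}]$ admit explicit $H$-factors as $\bigsqcup_{i=1}^{k}(\{u_{i-1}\}\cup S_{i})$ and $\bigsqcup_{i=1}^{k}(\{u_{i}\}\cup S_{i})$, respectively.

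Next I would show that $R$ is connected. Suppose for contradiction there exist distinct components $C_{1},C_{2}$, each of size at least $\gamma_{1}n$. For any $u\in C_{1}$, $v\in C_{2}$, the hypothesis $\delta(G)\geq (\tfrac12+\mu)n$ gives $|N(u)\cap N(v)|\geq 2\mu n$, and $\alpha(G[N(u)\cap N(v)])\leq \alpha n$ is inherited. I would exhibit an $(h-1)$-set $S\subseteq V(G)\setminus\{u,v\}$ together with isomorphisms $\phi_{u}:V(H)\to S\cup\{u\}$ and $\phi_{v}:V(H)\to S\cup\{v\}$ sending a fixed role-vertex $x\in V(H)$ to $u$ and to $v$ respectively; this yields $u\sim_{1}v$ and contradicts $C_{1}\cap C_{2}=\emptyset$. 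The embedding is constructed by placing $\phi_{u}(N_{H}(x))\subseteq N(u)\cap N(v)$ and $\phi_{u}(V(H)\setminus(\{x\}\cup N_{H}(x)))\subseteq V(G)\setminus(N(u)\cup N(v))$, using Corollary~\ref{coro2.10}, Proposition~\ref{prop2.12}, Lemma~\ref{lem2.14}, and the embedding Lemma~\ref{lem2.17} to realise the required dense forest and independent-set structures inside these two regions; the condition $\delta(G)\geq (1-\tfrac{2}{f(H)}+\mu)n$ guarantees sufficient density in the complement $V(G)\setminus(N(u)\cup N(v))$ when $x$ is a low-degree vertex of $H$.

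The main obstacle is the merging step. Producing a single $(h-1)$-set $S$ that simultaneously completes $u$ and $v$ to $H$-copies in the same role is structurally delicate, since $u$ and $v$ must have matching adjacency patterns to $S$. The argument requires a judicious choice of role $x\in V(H)$ and a careful case analysis based on the structure of $H$ (in particular whether $H\in\widetilde{\mathcal{H}}$ and on the sizes of the parts in an optimal acyclic partition of $H$). The small independence number is used both to produce the necessary forests and independent sets within the relevant neighborhoods and to apply Lemma~\ref{lem2.17} in a bipartite-like setting across the classes of the chosen partition of $H$.
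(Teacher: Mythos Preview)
Your first two steps are essentially sound and in fact reprove what the paper packages as Lemma~\ref{lem5.8}: using the $1$-reachability graph $R$ with $\delta(R)\ge\gamma_1 n$ to build a bounded number of closed parts via chaining connectors along short paths. (One quibble: $\delta(R)\ge\gamma_1 n$ bounds the diameter of each component by roughly $3/\gamma_1$, not $\lceil 1/\gamma_1\rceil$, so your $t_0$ needs a slightly larger constant; this is harmless.)

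The genuine gap is your merging step. You attempt to show directly that any $u\in C_1$ and $v\in C_2$ are $(H,\beta_1 n,1)$-reachable by embedding $H-x$ so that $N_H(x)$ lands in $N(u)\cap N(v)$ and the rest lands in $V(G)\setminus(N(u)\cup N(v))$. There are two concrete problems. First, you invoke Lemma~\ref{lem2.17}, but that lemma requires $\varepsilon$-regular pairs with prescribed densities; you have not applied the regularity lemma, so you have no such structure to feed it. Second, the set $V(G)\setminus(N(u)\cup N(v))$ can be tiny or empty (since $\delta(G)\ge(\tfrac12+\mu)n$), so there is no room to place $V(H)\setminus(\{x\}\cup N_H(x))$ there in general. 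Your own caveat that this step ``requires a judicious choice of role $x$ and careful case analysis'' is an acknowledgement that the argument is not actually carried out.

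The paper takes a different route for merging. It keeps the partition $\mathcal{P}$ into closed parts and, instead of proving direct $1$-reachability across parts, shows that whenever $|\mathcal{P}|\ge 2$ there exist two robust $h$-vectors $\mathbf{s},\mathbf{t}\in I^{\beta'}(\mathcal{P})$ with $\mathbf{s}-\mathbf{t}=\mathbf{u}_i-\mathbf{u}_j$ (Lemma~\ref{lem5.10}); Lemma~\ref{lem5.9} then merges $V_i\cup V_j$ into a single closed part, and one iterates. The proof of Lemma~\ref{lem5.10} is where the real work lies: one applies the regularity lemma \emph{refining} $\mathcal{P}$, obtains the reduced multigraph $R_{\beta,\varepsilon}$, and argues (using the minimum-degree condition in $R$ together with Lemma~\ref{lem5.11}) that some $K_{f(H)+1}$-embeddable structure in $R$ is crossing with respect to $\mathcal{P}$. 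Claim~\ref{claim5.13} then uses Corollary~\ref{coro2.17} and Lemma~\ref{lem2.17} on that structure to produce two copies of $H$ whose index vectors differ by a transferral. In short, the embedding lemmas you cite are indeed the right tools, but they must be deployed in the reduced graph after regularisation, not in $G$ directly; and the merging is achieved via transferrals rather than by exhibiting a single shared connector for $u$ and $v$.
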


Obviously, Lemma \ref{lem7.1} is an immediate corollary of the above-mentioned two lemmas. In the following, we will give the proofs of Lemma \ref{lem5.5} and Lemma \ref{lem5.4}.

\subsubsection{Proof of Lemma \ref{lem5.5}: $1$-reachability}
The proof of Lemma \ref{lem5.5} goes roughly as follows. We first apply the regularity lemma on $G$ to obtain a partition and a reduced graph $R$ with multiplicity 2. A result of Knierim and Su \cite{MR4193066} guarantees that every cluster $V_i$ is covered by a $K_{f(H)+1}$-embeddable structure, say $\mathcal{K}_i$ in the reduced graph $R$ (see Lemma~\ref{lem5.11}). In this case, for each $V_i$, by using Corollary~\ref{coro2.17} on $\mathcal{K}_i$, we are able to show that almost all vertices in $V_i$ are $1$-reachable to linearly many other vertices from $V_i$, where the bad vertices would be given iteratively at each stage of the process.  

As sketched above, we need the following result which investigates the structure around every cluster in the reduced graph.

\begin{lemma}[\cite{MR4193066}, Lemma 3.7]\label{lem5.11}
For $r\geq 4$ and $k\in \mathbb{N}$, let $R$ be a multigraph with multiplicity 2 on $k$ vertices and $\delta(R)> \left(1-\frac{2}{r}\right)2k$. Then any double-edge $ij\in E(R)$ is contained in some $K_{r+1}$-embeddable structure.
\end{lemma}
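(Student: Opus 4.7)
The plan is to reformulate the existence of a $K_{r+1}$-embeddable structure containing the double edge $ij$ as a clique-search problem on the common neighbourhood $A:=N_R(i)\cap N_R(j)$ (in the underlying simple graph of $R$), and then force the required structure via a careful degree count. The first step is the structural observation that any $K_{r+1}$-embeddable structure $\mathcal{K}$ containing $i,j$ has all of its other image vertices inside $A$, because in $K_{r+1}$ each preimage of such a vertex must be adjacent to some preimage of $i$ and of $j$; moreover, any two distinct image vertices of $\mathcal{K}$ inside $A$ must be joined by an edge in $R$ and hence form a clique in the underlying graph $R[A]$. Writing the type of $\mathcal{K}$ as $(a,b)$ with $a+2b=r+1$, such a $\mathcal{K}$ therefore corresponds precisely to a pair $(T,D)$ with $\{i,j\}\subseteq D\subseteq T\subseteq A\cup\{i,j\}$, where $T$ is a clique in the underlying graph, $D$ is a clique in the double-graph, and $|T|+|D|=r+1$.

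Second, I would extract from $\delta(R)>2(1-2/r)k$ two quantitative bounds: $|N_R(v)|\geq \lceil d_R(v)/2\rceil >(1-2/r)k$ (which yields $|A|>(1-4/r)k+2$) and $|D(v)|\geq d_R(v)-(k-1)>(1-4/r)k$, where $D(v)$ denotes the double-neighbourhood of $v$. In the favourable sub-case where $A$ contains an underlying clique of size $r-3$, the type $(r-3,2)$ works immediately by taking $T=\{i,j\}\cup$(this clique) and $D=\{i,j\}$.

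The main work is the complementary case in which the maximum underlying clique $C\subseteq A$ has size $t<r-3$. Setting $T:=C\cup\{i,j\}$, we need a double-clique $D\supseteq\{i,j\}$ inside $T$ of size $r-1-t$; equivalently, we need $r-3-t$ vertices of $C$ that lie in $D(i)\cap D(j)$ and are pairwise double-adjacent. The sharpest instance is $t=1$, i.e.\ $A$ independent in the underlying graph: then $\mathcal{K}$ can have at most three image vertices, so $b\geq r-2$ is forced (whence $r\leq 5$) and the only possibility is a double-triangle $\{i,j,w\}$ with $w\in D(i)\cap D(j)$. To rule out the configuration ``$A$ independent and $D(i)\cap D(j)=\emptyset$'' I would double-count $\sum_{v\in\{i,j\}\cup A}d_R(v)$ on the set $\{i,j\}\cup A$: since every $v\in A$ has its full neighbourhood in $\{i,j\}\cup B$ with $B=V(R)\setminus(A\cup\{i,j\})$, and since each $v\in A$ satisfies $d_R(v)\leq 3+2|B|$, the hypothesis $d_R(v)>2(1-2/r)k$ forces $|A|$ to be simultaneously strictly larger than $2k/r$ (from the $d_R(i)+d_R(j)$ contribution) and strictly smaller than $2k/r$ (from the per-vertex bound for some $v\in A$), a contradiction. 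Intermediate configurations $1<t<r-3$ are handled by the same counting scheme applied to $A\setminus C$, showing that the degree hypothesis compensates the ``underlying clique deficit'' $r-3-t$ by producing the required number of common double-neighbours of $\{i,j\}$ inside $C$ so that $D$ can be enlarged to size $r-1-t$. The main obstacle I expect is making this compensation argument uniform in $r$, since the simple-degree and double-degree lower bounds trade off against each other as the parameter $t$ varies and one must carefully track how each influences the achievable $|T|+|D|$.
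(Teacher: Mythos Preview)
This lemma is not proved in the present paper; it is quoted from Knierim and Su \cite{MR4193066} (their Lemma~3.7), so there is no in-paper argument to compare your proposal against.

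On the substance of your attempt: the structural reformulation --- searching for a pair $(T,D)$ with $\{i,j\}\subseteq D\subseteq T\subseteq A\cup\{i,j\}$, $T$ a clique in the underlying graph, $D$ a double-clique, and $|T|+|D|=r+1$ --- is correct, and the favourable case (an underlying $(r-3)$-clique in $A$, yielding type $(r-3,2)$) is handled cleanly. The real gap lies in the complementary case. When the maximum underlying clique $C\subseteq A$ has size $t$ with $1<t<r-3$, you need $r-3-t$ vertices of $C$ that are not merely common double-neighbours of $i$ and $j$ but are also \emph{pairwise double-adjacent}, because $D$ must itself be a double-clique. Your sketch (``the same counting scheme applied to $A\setminus C$ \ldots\ producing the required number of common double-neighbours of $\{i,j\}$ inside $C$'') only addresses membership in $D(i)\cap D(j)$ and says nothing about double-edges among those vertices. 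This is a missing idea, not just a missing detail: a lower bound on $|C\cap D(i)\cap D(j)|$ gives no control on the double-graph induced on that set, so you cannot conclude that $D$ can be enlarged to size $r-1-t$.

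A smaller issue: in the $t=1$ analysis, the asserted bound $|A|>2k/r$ ``from the $d_R(i)+d_R(j)$ contribution'' is not justified as written, since $D(i)\setminus\{j\}$ and $D(j)\setminus\{i\}$ need not lie inside $A$. The correct contradiction for $r=5$ comes from combining the per-vertex bound $d_R(v)\le 3+2|B|$ for $v\in A$ (giving $|A|<2k/5$) with the disjointness of $D(i)\setminus\{j\}$ and $D(j)\setminus\{i\}$ inside the whole vertex set, not inside $A$; one has to route that information back through the size of $A$ more carefully than you indicate.
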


\begin{proof} [Proof of Lemma \ref{lem5.5}]Given $h\in \mathbb{N}$, an $h$-vertex graph $H$ and constants $\tau, \mu$ with $0<\tau< \mu$, we shall choose

\begin{center}
$\frac{1}{n}\ll\frac{1}{N}\ll\alpha\ll \beta_{1}, \gamma_{1}\ll\frac{1}{k}\ll \varepsilon \ll \tau, \mu$.
\end{center}
Let $\beta=\frac{\mu}{10}$, $G$ be an $n$-vertex graph with $\delta(G)\geq \max\left\{\left(1-\frac{2}{f(H)}+\mu\right)n, \left(\frac{1}{2}+\mu\right)n\right\}$ and $\alpha(G)\leq \alpha n$.
Applying Lemma \ref{lem2.3} on $G$ with $\varepsilon, \beta>0$, we obtain an $\varepsilon$-regular partition $\mathcal{P}=\{V_{0}, V_{1}, \dots, V_{k}\}$ of $G$.
Let $m:=|V_{i}|$ for each $i\in [k]$ and $R:=R_{\beta, \varepsilon}$ be a reduced multigraph with multiplicity $2$ of the $\varepsilon$-regular partition $\mathcal{P}$. By Fact \ref{fact2.5}, we have $\delta(R)\geq 2\left(1-\frac{2}{f(H)}+\frac{\mu}{2}\right)k$ when $f(H)\geq 4$ and $\delta(R)\geq (1+\mu)k$ when $f(H)=2, 3$. Clearly, every cluster is in a double-edge. Applying Lemma \ref{lem5.11} with $r=\max\{f(H), 4\}$, we have that every cluster is in a $K_{f(H)+1}$-embeddable structure when $f(H)\geq 4$ and every cluster is in a $K_{5}$-embeddable structure when $f(H)=2, 3$. Therefore every cluster is covered by a $K_{f(H)+1}$-embeddable structure.

Write $\mathcal{F}:=\mathcal{F}(R, f(H)+1)$ for the family of all $K_{f(H)+1}$-embeddable structures in $R$. Since every cluster is in some $K_{f(H)+1}$-embeddable structure, there exists a minimal subfamily $\{\mathcal{K}_{1}, \dots, \mathcal{K}_{\ell}\}$ such that $V(R)=\bigcup_{i=1}^{\ell}$ $V(\mathcal{K}_{i})$. Now we first define the vertex set $B$ as follows:

\begin{itemize}
  \item [(1)]
  Let $S_{i}=V(\mathcal{K}_{i})\backslash \bigcup_{p=1}^{i-1} V(\mathcal{K}_{p})$ for $i\in [\ell]$, where $S_{1}=V(\mathcal{K}_{1})$. Then it follows from the minimality that $S_{i}\neq \emptyset$ and we write $S_{i}=\{V_{i_{1}}, \dots, V_{i_{s_{i}}}\}$ for some integer $s_{i}:=|S_{i}|$;
  \item [(2)]
  For $i\in [\ell]$ and $j\in [s_{i}]$, $B_{i_{j}}:=\Big\{v\in V_{i_{j}}\Big| |N(v)\cap V_{s}|\leq (d(V_{i_{j}}, V_{s})-\varepsilon)|V_{s}|$ for some $V_{s}\in V(\mathcal{K}_{i})\, \text{with}\, s\neq i_{j}\Big\}$, and $B_{i}:=\bigcup_{j=1}^{s_{i}} B_{i_{j}}$;
  \item [(3)]
  $B:=\bigcup_{i=1}^{\ell} B_{i}$.
\end{itemize}


Observe that $\{S_{1}, S_{2}, \dots, S_{\ell}\}$ is a partition of $V(R)$ and $|S_{i}|\le 2r+1$, $i\in [\ell]$. Moreover, for every $i\in [\ell]$ and $j\in [s_{i}]$, we have $|B_{i_{j}}|\le \varepsilon m|\mathcal{K}_{i}|$. Thus $|B|\leq \varepsilon m(2r+1)k\leq \tau n$ since $\varepsilon \ll \tau$.

Let $U:=V(G)\backslash B=\bigcup_{i=1}^{k} U_{i}$ where $U_{i}:= V_{i}\backslash B$. Then $|U_{i}|\geq m-\varepsilon m(2r+1)$. By Lemma \ref{lem2.2}, $d(U_{i}, U_{j})\geq d(V_{i}, V_{j})-\varepsilon$ for distinct $i, j\in [k]$. Next, we shall prove that every $v\in U$ is 1-inner-reachable to linearly many vertices in $U$.

For each $p\in [k]$ and any vertex $v\in U_{p}$, we choose the minimum $q\in [\ell]$ such that $V_{p}\in V(\mathcal{K}_{q})$. Since $\mathcal{K}_{q}$ is a $K_{f(H)+1}$-embeddable structure, there exists a subgraph of $\mathcal{K}_{q}$, say $\mathcal{K}'_{q}$, which is a $K_{f(H)}$-embeddable structure such that $i_{\mathcal{K}'_{q}}(V_{p})=1$. Without loss of generality, we may assume $p=1$ and write $V(\mathcal{K}'_{q})=\{V_{1}, \dots, V_{a}, \dots, V_{a+b}\}$ for some integers $a, b\in \mathbb{N}$ and $a+2b=f(H)$. Thus for distinct $i, j\in [a+b]$, it follows from (2) and the fact $\varepsilon\ll \beta, \frac{1}{r}$ that every vertex $u\in U_{i}$ has at least $|N(u)\cap V_{j}|-|B\cap V_{j}|> d(V_{i}, V_{j})m-\varepsilon m-\varepsilon m(2r+1)\geq \frac{\beta}{2}m$ neighbors in $U_{j}$.

Recall that $v\in U_{1}$. We denote by $U_{1}^{\ast}$ the set of vertices $u\in U_{1}$ such that for every $j\in [2, a+b]$, $|N(u)\cap N(v)\cap U_{j}|\geq (\frac{\beta}{2})^{2}m$. The following claim would complete our proof because $|U_{1}^{\ast}|\geq(1-\varepsilon(2r+1))|U_{1}|\geq (1-\varepsilon(2r+1))^{2}m\geq \gamma_{1}n$, where $\gamma_{1}\ll \frac{1}{k}\ll \varepsilon$.


\begin{claim}
The vertex $v$ is $(H, \beta_{1}n, 1)$-reachable to every $u\in U^{\ast}_{1}$.
\end{claim}
To prove this, for every $u\in U_{1}^{\ast}$, we arbitrarily choose $N_{1}\subseteq U_{1}\backslash \{u, v\}$ with $|N_{1}|\geq (\frac{\beta}{2})^{2}m$ and write $N_j:=N(u)\cap N(v)\cap U_{j}$ for each $j\in [2, a+b]$. Then $|N_j|\geq (\frac{\beta}{2})^{2}m$ for each $j\in [a+b]$. For each $j\in [a+b]$, $N'_{j}$ comes from $N_j$ by deleting any $\beta_{1}n$ vertices.
Hence, $|N_j'|\geq |N_j|-\beta_{1}n\geq (\frac{\beta}{2})^{2}m-\beta_{1}n\geq \frac{\beta^{2}}{8}m$ since $\frac{1}{n}\ll \frac{1}{N}\ll\beta_{1}\ll\frac{1}{k}, \mu$ and $\beta=\frac{\mu}{10}$.
By Lemma \ref{lem2.2}, $(N'_{i}, N'_{j})$ is $\varepsilon'$-regular with $\varepsilon':=\max\left\{2\varepsilon, \frac{8\varepsilon}{\beta^{2}}\right\}=\frac{8\varepsilon}{\beta^{2}}$ and $d(N'_{i}, N'_{j})\geq d(V_{i}, V_{j})-\varepsilon$ for distinct $i, j\in [a+b]$.
Applying Corollary \ref{coro2.17} on $G[N'_{1}\cup \cdots\cup N'_{a+b}]$, we obtain a copy of $Q(a,b)$ which induces an independent set inside $N_1'$. Hence, by definition $v$ is $(H, \beta_{1}n, 1)$-reachable to $u$.
\end{proof}

\subsubsection{Proof of Lemma \ref{lem5.4}}

In the following, we shall use the latticed-based absorbing method developed by Han \cite{MR3632565} and begin with the following notion introduced by Keevash and Mycroft \cite{MR3290271}.
Let $G$ be an $n$-vertex graph. We will often work with a vertex partition $\mathcal{P}=\{V_{1}, \dots, V_{r}\}$ of $V(G)$ for some integer $r\geq 1$. For any subset $S\subseteq V(G)$, the \emph{index vector} of $S$ with respect to $\mathcal{P}$, denoted by $i_{\mathcal{P}}(S)$, is the vector in $\mathbb{Z}^{r}$ whose $i$th coordinate is the size of the intersections of $S$ with $V_{i}$ for each $i\in [r]$. For each $j\in [r]$, let $\textbf{u}_{j}\in \mathbb{Z}^{r}$ be the $j$th unit vector, i.e. $\textbf{u}_{j}$ has 1 on the $j$th coordinate and 0 on the other coordinates. A \emph{transferral} is a vector of the form $\textbf{u}_{i}-\textbf{u}_{j}$ for some distinct $i, j\in [r]$. A vector $\textbf{v}\in \mathbb{Z}^{r}$ is an \emph{$s$-vector} if all its coordinates are non-negative and their sum is $s$. Given $\mu>0$ and an $h$-vertex graph $H$, we say that an $h$-vector \textbf{v} is \emph{$(H, \mu)$-robust} if for any set $W$ of at most $\mu n$ vertices, there is a copy of $H$ in $G-W$ whose vertex set has an index vector equal to $\textbf{v}$. Let $I^{\mu}(\mathcal{P})$ be the set of all $(H, \mu)$-robust $h$-vectors and $L^{\mu}(\mathcal{P})$ be the lattice (i.e. the additive subgroup) generated by $I^{\mu}(\mathcal{P})$.

Here is a brief proof outline for Lemma~\ref{lem5.4}. In order to prove that $V(G)$ is closed, we adopt a less direct approach and build on the merging techniques developed in \cite{HMWY2021}.
We first partition $V(G)$ into a constant number of parts each of which is closed (see Lemma~\ref{lem5.8}).
Then we try to merge some of them into a larger (still closed) part by analyzing the graph structures.
Lemma~\ref{lem5.9} allows us to iteratively merge two distinct parts into a closed one, given the existence of a transferral. Therefore, the key step is to find a transferral (see Lemma~\ref{lem5.10}), where we shall use the regularity method and Corollary~\ref{coro2.17}. 
%

The following lemma can be used to construct a partition such that each part is closed.


\begin{lemma}[\cite{HMWY2021}, Lemma 3.10]\label{lem5.8}
For any positive constants $\gamma_{1}, \beta_{1}$, $h\in \mathbb{N}$ with $h\geq 3$ and an $h$-vertex graph $H$, there exist $\beta_{2}=\beta_{2}(\gamma_{1}, \beta_{1}, h)>0$ and $t_{2}\in \mathbb{N}$ such that the following holds for sufficiently large n. Let $G$ be an $n$-vertex graph such that every vertex in $V(G)$ is $(H, \beta_{1} n, 1)$-reachable to at least $\gamma_{1} n$ other vertices. Then there is a partition $\mathcal{P}=\{V_{1}, \dots, V_{p}\}$ of $V(G)$ with $p\leq \lceil\frac{1}{\gamma_{1}}\rceil$ such that for each $i\in [p]$, $V_{i}$ is $(H, \beta_{2} n, t_{2})$-closed and $|V_{i}|\geq \frac{\gamma_{1}}{2}n$.
\end{lemma}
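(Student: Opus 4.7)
My plan is to let $R$ be the auxiliary graph on $V(G)$ with $uv\in E(R)$ iff $u$ and $v$ are $(H,\beta_{1}n,1)$-reachable, so that the hypothesis becomes $\delta(R)\ge\gamma_{1}n$. I then take the partition $\{V_{1},\dots,V_{p}\}$ to be the connected components of $R$. Each component contains a vertex of $R$-degree at least $\gamma_{1}n$ together with all of its neighbours, hence $|V_{i}|\ge\gamma_{1}n+1>\gamma_{1}n/2$, and consequently $p<1/\gamma_{1}\le\lceil 1/\gamma_{1}\rceil$. This gives the size and cardinality bounds for free.

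The core step is to show that every $V_{i}$ is $(H,\beta_{2}n,t_{2})$-closed for constants $\beta_{2}:=\beta_{1}/2$ and $t_{2}:=\lceil 3/\gamma_{1}\rceil$. First I bound the diameter of each $R[V_{i}]$: given $u,v$ in the same component, take a shortest $R$-path $u=x_{0},x_{1},\dots,x_{k}=v$; by the standard shortcut argument, for any indices with $|i-j|\ge 3$ the neighbourhoods $N_{R}(x_{i})$ and $N_{R}(x_{j})$ are disjoint (a common neighbour would give a path of length $2$ between $x_{i}$ and $x_{j}$, contradicting shortestness). Selecting $x_{0},x_{3},x_{6},\dots$ then yields $(\lfloor k/3\rfloor+1)\gamma_{1}n\le n$ and therefore $k\le 3/\gamma_{1}-1\le t_{2}$.

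Second, I convert such a short $R$-path into an $H$-connector. Given $u,v\in V_{i}$ and a forbidden set $W\subseteq V(G)$ with $|W|\le\beta_{2}n$, pick a shortest $R$-path $u=x_{0},\dots,x_{k}=v$ and choose 1-reachability witnesses $S_{1},\dots,S_{k}$ greedily: at step $j$ invoke the $(H,\beta_{1}n,1)$-reachability of $x_{j-1}$ and $x_{j}$ with forbidden set $W\cup\{x_{0},\dots,x_{k}\}\cup\bigcup_{l<j}S_{l}$, which has size at most $\beta_{2}n+t_{2}h+1\le\beta_{1}n$ for large $n$. This produces $S_{j}$ of size $h-1$ such that both $G[S_{j}\cup\{x_{j-1}\}]$ and $G[S_{j}\cup\{x_{j}\}]$ span copies of $H$. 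Setting $S:=\bigcup_{j=1}^{k}S_{j}\cup\{x_{1},\dots,x_{k-1}\}$, which has size $k(h-1)+(k-1)=kh-1\le ht_{2}-1$ and is disjoint from $W$, the copies $G[S_{j}\cup\{x_{j-1}\}]$ for $j=1,\dots,k$ together form an $H$-factor of $G[S\cup\{u\}]$ (covering $\{x_{0},\dots,x_{k-1}\}\cup S$), while the copies $G[S_{j}\cup\{x_{j}\}]$ for $j=1,\dots,k$ give one for $G[S\cup\{v\}]$. This establishes the desired $(H,\beta_{2}n,t_{2})$-reachability.

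The only delicate point is keeping $t_{2}$ a constant depending on $\gamma_{1}$ alone rather than something growing with $n$; the neighbourhood-disjointness argument in the diameter bound is exactly what accomplishes this, and from there concatenating 1-reachability witnesses is routine bookkeeping. No further density or pseudorandomness assumptions on $G$ are needed, which is why the lemma has such a clean form independent of minimum-degree or independence-number hypotheses.
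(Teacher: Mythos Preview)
The paper does not supply its own proof of this lemma: it is quoted verbatim from \cite{HMWY2021} (Lemma~3.10) and used as a black box in the proof of Lemma~\ref{lem5.4}. So there is no in-paper argument to compare your proposal against.

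That said, your argument is correct and is essentially the standard proof of this type of statement. The auxiliary ``1-reachability graph'' $R$ with $\delta(R)\ge\gamma_{1}n$, the component decomposition giving the bound $p\le\lceil 1/\gamma_{1}\rceil$ and $|V_{i}|>\gamma_{1}n$, the diameter bound via the disjoint-neighbourhood trick along a shortest path, and the greedy concatenation of $1$-step $H$-connectors into a single connector of length $kh-1\le ht_{2}-1$ all go through as you describe. The bookkeeping (forbidden set of size $\beta_{2}n+O_{h,\gamma_{1}}(1)\le\beta_{1}n$ for large $n$, and the two $H$-factors of $S\cup\{u\}$ and $S\cup\{v\}$ obtained by shifting the copies one step along the path) is accurate. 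Note that the paper's definition of ``closed'' explicitly allows the $H$-connector to leave the part $V_{i}$, so there is no issue with the sets $S_{j}$ living outside the component.
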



\begin{lemma}[\cite{HMWY2021}, Lemma 4.4]\label{lem5.9}
Given any positive integers $h, t\in \mathbb{N}$ with $h\geq 3$, an $h$-vertex graph $H$ and constant $\beta>0$, the following holds for sufficiently large $n$. Let $G$ be an $n$-vertex graph with a partition $\mathcal{P}=\{V_{1}, \dots, V_{p}\}$ of $V(G)$ such that each $V_{i}$ is $(H, \beta n, t)$-closed. For distinct $i, j\in [p]$, if there exist two $h$-vectors $\textbf{s}, \textbf{t}\in I^{\beta}(\mathcal{P})$ such that $\textbf{s} - \textbf{t}=\textbf{u}_{i} - \textbf{u}_{j}$, then $V_{i}\cup V_{j}$ is $\left(H, \frac{\beta n}{2}, 2ht\right)$-closed.
\end{lemma}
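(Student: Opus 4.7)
Our plan follows the standard transferral-merging step of the lattice-based absorption framework; the whole content of the lemma is that a single pair of robust $h$-vectors differing by a unit transferral suffices to fuse two already-closed parts into one. Pairs inside a single $V_k$ are immediately $(H,\beta n/2,2ht)$-reachable by hypothesis, since $\beta n/2\le\beta n$ and $ht-1\le 2h^2t-1$. So I fix a cross pair $u\in V_i$, $v\in V_j$ and a forbidden set $W\subseteq V(G)$ with $|W|\le\beta n/2$, and aim to exhibit a connector $S\subseteq V(G)\setminus(W\cup\{u,v\})$ with $|S|\le 2h^2t-1$ such that both $G[S\cup\{u\}]$ and $G[S\cup\{v\}]$ admit $H$-factors.

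First I would invoke the robustness of $\textbf{s}$ with forbidden set $W\cup\{u,v\}$ (of size at most $\beta n$) to extract a copy $H_{\textbf{s}}$ of $H$ in $G-W-\{u,v\}$ with $i_{\mathcal{P}}(V(H_{\textbf{s}}))=\textbf{s}$, and then that of $\textbf{t}$ with forbidden set further enlarged by $V(H_{\textbf{s}})$ to obtain a vertex-disjoint copy $H_{\textbf{t}}$ of index vector $\textbf{t}$. The identity $\textbf{s}-\textbf{t}=\textbf{u}_i-\textbf{u}_j$ together with the non-negativity of $\textbf{t}$'s coordinates forces $s_i\ge1$ and $t_j\ge1$, so I can single out vertices $u_1\in V(H_{\textbf{s}})\cap V_i$ and $v_1\in V(H_{\textbf{t}})\cap V_j$. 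Next, I would invoke the $(H,\beta n,t)$-closure of $V_i$ on the pair $\{u,u_1\}$, with forbidden set $W\cup\{v\}\cup V(H_{\textbf{s}})\cup V(H_{\textbf{t}})$ (still of size at most $\beta n$), obtaining a connector $T_u$ of size at most $ht-1$ such that both $G[T_u\cup\{u\}]$ and $G[T_u\cup\{u_1\}]$ have $H$-factors. Symmetrically, the closure of $V_j$ furnishes a connector $T_v$ for $\{v,v_1\}$, chosen disjoint from everything previously picked.

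The connector $S$ is then assembled from the four pieces $V(H_{\textbf{s}})$, $V(H_{\textbf{t}})$, $T_u$, $T_v$, automatically giving $|S|\le 2h+2(ht-1)\le 2h^2t-1$ and $S\cap(W\cup\{u,v\})=\emptyset$. The two intended $H$-tilings on $S\cup\{u\}$ and $S\cup\{v\}$ should differ precisely by the replacement of one $\textbf{s}$-shaped $H$-copy by one $\textbf{t}$-shaped $H$-copy: on the $u$-side, $T_u\cup\{u\}$ plays the role of the ``$u$-absorbing'' factor block, leaving $u_1$ to be consumed inside $V(H_{\textbf{s}})$; on the $v$-side, $T_v\cup\{v\}$ absorbs $v$, leaving $v_1$ inside $V(H_{\textbf{t}})$. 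The transferral identity $\textbf{s}-\textbf{t}=\textbf{u}_i-\textbf{u}_j$ is exactly what balances the two decompositions at the level of index vectors: swapping $u\in V_i$ for $v\in V_j$ changes the index vector by $\textbf{u}_i-\textbf{u}_j$, which is compensated by swapping one $\textbf{s}$-block for one $\textbf{t}$-block.

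\textbf{Main obstacle.} The delicate point is the combinatorial verification that these two intended tilings actually live on a common vertex set $S$: one must carefully track how the vertices $u_1, v_1$ are reassigned between the base $H$-copies and the closure connectors as one swaps between the $u$- and $v$-factorizations, and ensure every vertex of $S$ is covered exactly once in both decompositions. This is the heart of the transferral-merging argument, and is precisely the content of \cite{HMWY2021}, Lemma~4.4, from which our statement is directly cited; once this bookkeeping is in place, the size bound and the disjointness of $S$ from $W$ follow immediately from the construction above.
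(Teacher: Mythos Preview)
The paper does not give its own proof of this lemma; it is quoted as a black box from \cite{HMWY2021}. Nevertheless, your sketch contains a genuine gap, not just unfinished bookkeeping.

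Your proposed connector $S=V(H_{\textbf{s}})\cup V(H_{\textbf{t}})\cup T_u\cup T_v$ cannot work, for a cardinality reason. Each of $T_u,T_v$ has size $\equiv -1\pmod h$, so $|S|\equiv 2h-2\equiv -2\pmod h$ and $|S\cup\{u\}|$ is not divisible by $h$; hence $G[S\cup\{u\}]$ admits no $H$-factor. Concretely, on the $u$-side your three blocks $T_u\cup\{u\}$, $V(H_{\textbf{s}})$, $V(H_{\textbf{t}})$ tile correctly, but the set $T_v$ is left completely uncovered, and $T_v$ alone has the wrong size. The ``swap one $\textbf{s}$-block for one $\textbf{t}$-block'' heuristic does not rescue this: both $H_{\textbf{s}}$ and $H_{\textbf{t}}$ sit untouched inside $S$, so there is nothing to swap.

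The missing ingredient is that $\textbf{s}-\textbf{t}=\textbf{u}_i-\textbf{u}_j$ yields a \emph{part-preserving} bijection between $V(H_{\textbf{s}})\setminus\{u_1\}$ and $V(H_{\textbf{t}})\setminus\{v_1\}$, since both have index vector $\textbf{s}-\textbf{u}_i=\textbf{t}-\textbf{u}_j$. One must invoke the closure hypothesis $h-1$ additional times, once per matched pair $(x_\ell,y_\ell)$ inside its common part, obtaining further connectors $T_1,\dots,T_{h-1}$. With $S=T_u\cup V(H_{\textbf{s}})\cup T_1\cup\cdots\cup T_{h-1}\cup V(H_{\textbf{t}})\cup T_v$ one tiles $S\cup\{u\}$ as $(T_u\cup\{u\})\,\cup\,V(H_{\textbf{s}})\,\cup\,\bigcup_\ell(T_\ell\cup\{y_\ell\})\,\cup\,(T_v\cup\{v_1\})$ and $S\cup\{v\}$ as $(T_u\cup\{u_1\})\,\cup\,\bigcup_\ell(T_\ell\cup\{x_\ell\})\,\cup\,V(H_{\textbf{t}})\,\cup\,(T_v\cup\{v\})$. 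This uses $h+1$ connectors of size at most $ht-1$ plus $2h$ vertices, giving $|S|\le (h+1)(ht-1)+2h\le 2h^2t-1$, and the accumulated forbidden set stays below $\beta n/2+O(h^2t)\le\beta n$, which is exactly why the parameters degrade from $(\beta n,t)$ to $(\beta n/2,2ht)$.
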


Note that to invoke Lemma \ref{lem5.9}, we need the following result which provides a sufficient condition for the existence of a transferral.

\begin{lemma}[]\label{lem5.10}
Given $p, h\in \mathbb{N}$ with $h\geq 3$, an $h$-vertex graph $H$ and constants $\mu, \delta_{1}>0$, there exist $\alpha, \beta'>0$ such that the following holds for sufficiently large $n$. Let $G$ be an $n$-vertex graph with $\delta(G)\geq \max\left\{\left(1-\frac{2}{f(H)}+\mu\right)n, \left(\frac{1}{2}+\mu\right)n\right\}$, $\alpha(G)\leq \alpha n$, $\mathcal{P}=\{V_{1}, \dots, V_{p}\}$ be a partition of $V(G)$ with $|V_{i}|\geq \delta_{1} n$ for each $i\in [p]$. If $p\geq 2$, then there exist two $h$-vectors $\textbf{s}, \textbf{t}\in I^{\beta'}(\mathcal{P})$ such that $\textbf{s} - \textbf{t}=\textbf{u}_{i} - \textbf{u}_{j}$ for some distinct $i, j\in [p]$.
\end{lemma}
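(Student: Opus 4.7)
The plan is to apply the regularity method to $G$ in a way compatible with the given partition $\mathcal{P}$, and then construct two copies of $H$ whose index vectors differ by a single unit-vector swap. First, one applies Lemma~\ref{lem2.3} with parameters $\varepsilon,\beta>0$ chosen sufficiently small, insisting that the initial partition refines $\mathcal{P}$; the resulting clusters $V_0,W_1,\ldots,W_k$ each lie in one part $V_{r(l)}$, and since $|V_\ell|\geq\delta_1 n$ every part contains $\Omega(\delta_1 k)$ clusters. Let $R:=R_{\beta,\varepsilon}$ denote the reduced multigraph; by Fact~\ref{fact2.5} its minimum multi-degree is at least $2(1-\tfrac{2}{f(H)}+\tfrac{\mu}{2})k$ (with the usual adjustment in the case $f(H)\leq 3$), which in particular gives every cluster $\Omega(\mu k)$ double-edge neighbours.

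The key structural step is to find, in $R$, a $K_{f(H)+1}$-embeddable structure $\mathcal{K}^+$ that \emph{bridges} two parts of $\mathcal{P}$, i.e.\ it contains clusters $C_i,C_j\in V(\mathcal{K}^+)$ with $C_i\subseteq V_i$, $C_j\subseteq V_j$ and $i\neq j$. By Lemma~\ref{lem5.11}, every double-edge of $R$ lies in such a structure, so it suffices to locate a double-edge whose endpoints sit in distinct parts of $\mathcal{P}$. This is achieved by an averaging argument: if all double-edges were intra-part, the double-edge neighbourhoods of individual clusters would be forced to lie inside their own parts, contradicting either the large lower bound on $\delta(R)$ (when some $|V_\ell|$ is comparatively small) or the minimum-degree hypothesis $\delta(G)\geq(\tfrac12+\mu)n$ combined with $\alpha(G)\leq\alpha n$ (when the parts are large enough that $G$ would be forced to look essentially like a disjoint union of ``cliques'' on each $V_\ell$, which the independence-number condition rules out).

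Once $\mathcal{K}^+$ and the bridging clusters $C_i,C_j$ are fixed, we construct the two copies of $H$ as follows. Let $\mathcal{K}'\subseteq\mathcal{K}^+$ be the $K_{f(H)}$-embeddable sub-structure obtained by removing a single slot other than $C_i$, so $\mathcal{K}'$ has parameters $a,b$ with $a+2b=f(H)$ and $C_i$ survives as a single slot. Slicing each cluster of $\mathcal{K}'$ to avoid any forbidden set $W$ of size at most $\beta' n$ (via Lemma~\ref{lem2.2}) and applying Corollary~\ref{coro2.17}, we embed a copy of $Q(a,b)$ in $G$ and extract from its $H$-factor a copy $H_1$ of $H$ together with a chosen vertex $u\in V(H_1)\cap C_i$ lying in the independent-set role associated with $C_i$. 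We then seek $v\in C_j\setminus W$ such that $H_2:=H_1-u+v$ is again a copy of $H$ with $v$ in the former role of $u$; by the $\varepsilon$-regularity between $C_j$ and every other cluster of $\mathcal{K}'$, together with $\alpha(G)\leq\alpha n$ for $\alpha\ll\beta$, a constant fraction of $v\in C_j$ has the required adjacency pattern with respect to $V(H_1)\setminus\{u\}$ (including the non-adjacencies to the $s-1$ other vertices in the independent-set role), so such $v$ exists. The resulting index vectors $\textbf{s}:=i_{\mathcal{P}}(V(H_1))$ and $\textbf{t}:=i_{\mathcal{P}}(V(H_2))$ satisfy $\textbf{s}-\textbf{t}=\textbf{u}_i-\textbf{u}_j$; since the construction only relies on densities and cluster sizes which are preserved after deleting any set of at most $\beta'n$ vertices, both vectors lie in $I^{\beta'}(\mathcal{P})$ once $\beta'$ is chosen small enough relative to $\varepsilon,\beta,1/k$.

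The principal obstacle is the bridging step: proving that \emph{at least one} $K_{f(H)+1}$-embeddable structure in $R$ contains clusters in two distinct parts of $\mathcal{P}$. Without such a configuration there is no natural place in which to perform the vertex swap, and the argument depends subtly on the range of $|V_\ell|$ versus $\mu n$ as well as on whether $f(H)\geq 4$ or $f(H)\in\{2,3\}$; in the borderline regimes it is precisely the auxiliary hypothesis $\delta(G)\geq(\tfrac12+\mu)n$ that forces the required cross-part interaction. Once the bridging structure is in hand, the swap itself is a localised extension of the regularity-based embedding in Lemma~\ref{lem2.17} and the $1$-reachability argument in Lemma~\ref{lem5.5}, so no fundamentally new ideas are needed beyond the bridging.
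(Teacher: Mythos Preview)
Your overall architecture matches the paper's: refine $\mathcal{P}$ via the regularity lemma, locate a crossing $K_{f(H)+1}$-embeddable structure in the reduced multigraph $R$, and then produce two copies of $H$ differing by a single vertex swap (this is exactly the content of the paper's Claim~5.13). The serious gap is in the bridging step.

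You reduce the bridging problem to finding a \emph{crossing double-edge}, and then argue informally that if none existed, either some part is too small to contain a double-edge neighbourhood, or the independence-number condition is violated. The first branch is fine for large $f(H)$ (indeed for $f\ge 8$ one has $|D_{i,j}|\ge\delta(R)-k>(1/2)k\ge|\mathcal V_1|$), but for $f\in\{4,5,6,7\}$ it genuinely can happen that all double-edges of $R$ are intra-part: with $p=2$, $|\mathcal V_1|\approx k/2$ and, say, $f=5$, the bound $|D_{i,j}|\ge(1/5+\mu)k$ does not force any double-edge to leave $\mathcal V_1$. Your fallback---that the absence of crossing double-edges would make $G$ look like a disjoint union of cliques, contradicting $\alpha(G)\le\alpha n$---does not hold, because crossing \emph{single} edges in $R$ (densities in $[\beta,\tfrac12+\beta)$) are still permitted and keep $G$ well connected without creating large independent sets. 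The independence-number hypothesis gives no leverage on the double-edge structure of $R$ here. The paper handles precisely these ranges by a case analysis on $f$: for $f\in\{6,7\}$ it builds a clique in the common single-edge neighbourhood of an intra-part double-edge via Tur\'an's theorem; for $f=5$ it analyses maximal double-edged cliques inside $\mathcal V_1$ and counts crossing single-edges; for $f\in\{2,3,4\}$ it uses $\delta(R)\ge(1+\mu)k$ and the non-existence of a crossing $K_3^=$ to derive a degree contradiction. In each case the crossing $K_{f+1}$-embeddable structure is obtained through crossing \emph{single} edges, not a crossing double-edge.

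A secondary point: your swap construction implicitly assumes some cluster of $\mathcal K^+$ has $i_{\mathcal K^+}=1$ so that one can peel off a ``single slot'' and place the swapped vertex in an independent-set role. When $f$ is odd and $\mathcal K^+$ happens to be a double-edged $K_{(f+1)/2}$ (so $a=0$ and every cluster has multiplicity $2$), this fails; the paper treats this case separately by embedding a star $K_{1,2s-1}$ in one cluster and moving its centre across parts.
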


Now, we have collected all the tools needed for the proof of Lemma \ref{lem5.4}.

%
\begin{proof} [Proof of Lemma \ref{lem5.4}] Given $h\in \mathbb{N}$ with $h\geq 3$, an $h$-vertex graph $H$ and constants $\beta_{1}, \gamma_{1}, \mu>0$, we shall choose

\begin{center}
$\frac{1}{n}\ll\alpha\ll\beta, \frac{1}{t}\ll \beta_{2}, \frac{1}{t_{2}}\ll \beta_{1}, \gamma_{1}, \mu$.
\end{center}
Let $G$ be an $n$-vertex graph with $\delta(G)\geq \max\left\{\left(1-\frac{2}{f(H)}+\mu\right)n, \left(\frac{1}{2}+\mu\right)n\right\}$, $\alpha(G)\leq \alpha n$ and every vertex in $V(G)$ is $(H, \beta_{1}n, 1)$-reachable to at least $\gamma_{1}n$ other vertices. Applying Lemma \ref{lem5.8} on $G$, we obtain a partition $\mathcal{P}_{0}=\{V_{1}, \dots, V_{p}\}$ for some $p\leq \lceil\frac{1}{\gamma_{1}}\rceil$, where each $V_{i}$ is $(H, \beta_{2}n, t_{2})$-closed and $|V_{i}|\geq \frac{\gamma_{1}n}{2}$.

Let $\mathcal{P}'=\{U_{1}, \dots, U_{p'}\}$ be a vertex partition of $G$ with minimum $|\mathcal{P}'|$ such that $|U_{i}|\geq \frac{\gamma_{1}n}{2}$ and $U_{i}$ is $(H, \beta n, t)$-closed. We claim that $p'=1$. If $p'\geq 2$, then by Lemma \ref{lem5.9} and Lemma \ref{lem5.10}, there exist two distinct vertex parts $U_{i}$ and $U_{j}$ for distinct $i, j\in [p']$ such that $U_{i}\cup U_{j}$ is $(H, \beta'n, t')$-closed for some $\beta'$ and $t'$. By taking $U_{i}\cup U_{j}$ as a new part in partition and renaming all the parts if necessary, we get a partition $\mathcal{P}''$ with $|\mathcal{P}''|<|\mathcal{P}'|$, which contradicts the minimality of $|\mathcal{P}'|$. Hence, $V(G)$ is $(H, \beta n, t)$-closed.
\end{proof}

Next, we give a proof of Lemma \ref{lem5.10}. In order to prove Lemma \ref{lem5.10}, we use the regularity lemma (Lemma \ref{lem2.3}) and an embedding result (Claim \ref{claim5.13}). In particular, such an embedding result allows us to construct vertex-disjoint copies of $H$ with different index vectors, which can be used to show the existence of a transferral. This roughly reduces the problem to finding in the reduced graph a crossing $K_{f(H)+1}$-embeddable structure, which will be made precise later.

\begin{proof}[Proof of Lemma \ref{lem5.10}] Given $p, h, t\in \mathbb{N}$, an $h$-vertex graph $H$ and positive constants $\mu, \delta_{1}$, we shall choose
\begin{center}
$\frac{1}{n}\ll\alpha\ll\frac{1}{N}\ll\beta'\ll\frac{1}{k}\ll\varepsilon\ll\mu, \delta_{1}$.
\end{center}
Let $\beta=\frac{\mu}{10}$, $r:=ar(H)$, $\mathcal{T}=\{T_{1}, \dots, T_{r}\}$ be an acyclic partition of $H$, $\mathcal{P}=\{V_{1}, \dots, V_{p}\}$ be a vertex partition of $V(G)$ with $|V_{i}|\geq \delta_{1} n$ for each $i\in [p]$. Anchoring at the current vertex partition of $V(G)$, we apply Lemma \ref{lem2.3} with $\varepsilon, \beta>0$ and refine the current partition.
After refinement, we denote the $\varepsilon$-regular partition by $\mathcal{P}'=\{V_{0}, V_{1, 1}, \dots, V_{1, s_{1}}, \dots, V_{p, 1}, \dots, V_{p, s_{p}}\}$ where $V_{i, j}\subseteq V_{i}$ and $s_{i}\in \mathbb{N}$ for each $i\in [p]$, $j\in [s_{i}]$. Let $R:=R_{\beta, \varepsilon}$ be the reduced graph with $|V(R)|=k$ and $\mathcal{V}_{i}:=\{V_{i, 1}, \dots, V_{i, s_{i}}\}$ be a vertex subset of $V(R)$ for each $i\in[p]$.
For every cluster $V_{i, j}$, $D_{i, j}$ denotes the double-edge neighborhood of $V_{i, j}$.
It holds that $2|D_{i, j}|+(k-|D_{i, j}|) \geq \delta(R)$ which means that

\begin{equation}\label{eq2}
|D_{i, j}|\geq \delta(R)-k,
\end{equation}
for each $V_{i, j}\in V(R)$.

We call a subgraph $\mathcal{K}\subseteq R$ \emph{crossing} with respect to the partition $\mathcal{P}$ if $V(\mathcal{K})\cap \mathcal{V}_{i}\neq \emptyset$ and $V(\mathcal{K})\cap \mathcal{V}_{j}\neq \emptyset$ for some distinct $i, j\in [p]$. A \emph{double-edged} $\mathcal{K}$ has every two adjacent vertices connected by a double-edge. We use $K^{=}_{3}$ to denote the triangle which contains exactly one double-edge and write $f:=f(H)$ throughout this proof.

The following claim provides a sufficient condition for the existence of a transferral. Its proof is postponed to the end of this subsection.

\begin{claim}\label{claim5.13}
If there is a crossing $K_{f+1}$-embeddable structure $\mathcal{K}$ in $R$, then there exist two $h$-vectors $\textbf{s}, \textbf{t}\in I^{\beta'}(\mathcal{P})$ such that $\textbf{s} - \textbf{t}=\textbf{u}_{i} - \textbf{u}_{j}$ for distinct integers $i$ and $j$.
\end{claim}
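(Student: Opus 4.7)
The plan is to construct two single copies of $H$ in $G$, say $H_1$ and $H_2$, which agree on all but one vertex, with the differing vertex $v$ lying in $W^{(i)}\subseteq V_i$ for $H_1$ and in $W^{(j)}\subseteq V_j$ for $H_2$, where $W^{(i)}\in V(\mathcal{K})\cap \mathcal{V}_i$ and $W^{(j)}\in V(\mathcal{K})\cap \mathcal{V}_j$ are two clusters in distinct parts of $\mathcal{P}$, guaranteed by the crossing property of $\mathcal{K}$. If these embeddings can moreover be carried out robustly (i.e.\ inside $G-W$ for any $W$ with $|W|\le \beta' n$), then setting $\textbf{s}:=i_{\mathcal{P}}(V(H_1))$ and $\textbf{t}:=i_{\mathcal{P}}(V(H_2))$ gives the desired transferral $\textbf{s}-\textbf{t}=\textbf{u}_i-\textbf{u}_j$ with $\textbf{s},\textbf{t}\in I^{\beta'}(\mathcal{P})$.

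The key structural observation is that $\mathcal{K}$ is $K_{f+1}$-embeddable, one unit of multiplicity more than the $K_f$-embeddable substructure needed by Corollary~\ref{coro2.17} to host a copy of $Q(a,b)$ and hence a single copy of $H$. This surplus unit acts as a movable slot. Concretely, I would pick a $K_f$-embeddable substructure $\mathcal{K}^\ast\subseteq \mathcal{K}$ that still contains $W^{(i)}$ with positive multiplicity and, whenever possible, avoids $W^{(j)}$. Invoking Corollary~\ref{coro2.17} to embed a copy of $Q(a,b)$ into sub-clusters of $\mathcal{K}^\ast$, and using the freedom in the matrix $A^\ast$ from the proof of Lemma~\ref{lem2.16} to place an independent-set slot $U_l$ with $l\in[a]$ inside $W^{(i)}$, I would extract from the $H$-factor of this $Q(a,b)$ a single copy $H_1$ containing a vertex $v\in W^{(i)}$ lying in an independent color class of the acyclic partition of $H$; in particular $v$ has no $H_1$-neighbor inside $W^{(i)}$. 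To build $H_2$ it remains to find $v'\in W^{(j)}\setminus W$ adjacent in $G$ to each of the $H_1$-neighbours $u_1,\ldots,u_k$ of $v$. Since every cluster of $\mathcal{K}$ is $\varepsilon$-regular with $W^{(j)}$ at density at least $\beta$, successive neighborhood restrictions inside $W^{(j)}$ yield a common neighborhood of size at least $\beta''|W^{(j)}|$ for some constant $\beta''>0$ depending only on $\beta$ and $h$, and any vertex in this set outside $W$ serves as $v'$. The Slicing Lemma~\ref{lem2.2} simultaneously ensures that the embedding of $H_1$ itself is preserved in $G-W$ after replacing each cluster $V_l$ by $V_l\setminus W$.

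The main technical obstacle is the case in which $W^{(j)}$ is forced to remain inside $\mathcal{K}^\ast$ (namely when $i_{\mathcal{K}}(W^{(j)})=2$, so one copy of $W^{(j)}$ in the multi-embedding survives any single-vertex removal). In that situation some $u_s$ may lie inside $W^{(j)}$ itself and the swap $v\mapsto v'$ then demands intra-cluster adjacency, so the regularity argument must be supplemented with the small-independence condition via Proposition~\ref{prop2.12} and Lemma~\ref{lem2.14} in order to locate a valid $v'$. Equivalently, one can exploit the freedom in choosing which vertex of $V(K_{f+1})$ to delete (and hence which matrix layout $A^\ast$ in Lemma~\ref{lem2.16} to use) in order to place the color class of $v$ so that its $H$-neighborhood avoids every color class assigned to $W^{(j)}$. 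A short case analysis on the pair $(i_{\mathcal{K}}(W^{(i)}),i_{\mathcal{K}}(W^{(j)}))\in\{1,2\}^2$ then handles all possibilities and completes the proof.
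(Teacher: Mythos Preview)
Your overall strategy---produce two copies of $H$ differing in exactly one vertex, with that vertex moving between clusters lying in distinct parts of $\mathcal{P}$---is the same as the paper's. The execution differs in one important respect, and in the hardest sub-case your sketch does not close.

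The paper reverses your order of operations. Rather than embedding $H_1$ first via Corollary~\ref{coro2.17} and \emph{afterwards} hunting for a swap vertex $v'\in W^{(j)}$ adjacent to the already-fixed $H_1$-neighbours $u_1,\dots,u_k$ of $v$, the paper (whenever some cluster $U_q$ has $i_{\mathcal{K}}(U_q)=1$) first picks $v\in U_q$ and then applies Corollary~\ref{coro2.17} to the sets $Y_i=N(v)\cap U_i'$. The entire $Q(a-1,b)$, and hence $H_1$, is then contained in $N(v)$, so replacing \emph{any} vertex of $H_1$ lying in a part different from that of $U_q$ by $v$ immediately gives a valid $H_2$. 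Your version requires the specific vertices $u_1,\dots,u_k$ to have a large common neighbourhood in $W^{(j)}$, but Corollary~\ref{coro2.17} offers no such control; restricting each cluster to typical vertices beforehand only guarantees large \emph{individual} degrees into $W^{(j)}$, not a large intersection for up to $h-1$ of them when the density is merely $\beta$. This can be repaired by threading an extra phantom cluster $W^{(j)}$ through the inductive proof of Lemma~\ref{lem2.17}, but that is more than ``successive neighbourhood restrictions'' and is exactly what picking $v$ first renders unnecessary.

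The genuine gap is the case in which every cluster of $\mathcal{K}$ has multiplicity $2$ (so $a=0$, $f$ is odd and $H\in\widetilde{\mathcal{H}}$). Here no cluster can be deleted outright, no independent-set slot exists in any $K_f$-substructure except at the very cluster whose multiplicity you reduce, and the swap you want forces intra-cluster adjacency. Your suggestions---Proposition~\ref{prop2.12}, Lemma~\ref{lem2.14}, or rearranging the matrix $A^\ast$---are directions, not arguments. The paper handles this case by a separate and specific construction: it applies Lemma~\ref{lem2.17} directly (not the corollary) with the tailored choice $F_1=K_{1,2s-1}$ in $U_1$, so that the centre $v^\ast$ of this star is adjacent to every other vertex of the embedded $Q$. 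One copy $H_1$ uses only leaves of the star in $U_1$; then $H_2$ is obtained by adding $v^\ast$ and removing one vertex from $U_2$, yielding $\textbf{s}-\textbf{t}=\textbf{u}_2-\textbf{u}_1$. This star trick is the missing idea in your $(2,2)$ case.
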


Thus, we may assume that there is no crossing $K_{f+1}$-embeddable structure. Note that if there exists a crossing double-edge between $\mathcal{V}_{i}$ and $\mathcal{V}_{j}$ for some distinct $i, j\in [p]$, then by Lemma \ref{lem5.11} the double-edge is contained in a $K_{f+1}$-embeddable structure which is crossing, a contradiction. So we may further assume that there is no crossing double-edge in $R$. In this case, we shall find a crossing $K_{f+1}$-embeddable structure and this gives a final contradiction.

In the following, we assume $|\mathcal{V}_{i}|\leq |\mathcal{V}_{i+1}|$ for each $i\in [p-1]$ and $x:=|\mathcal{V}_{1}|$ for some integer $x\leq \frac{k}{2}$. To find a crossing $K_{f+1}$-embeddable structure without any crossing double-edge, we divide the proof into the following four cases.

Assume $f\geq 8$. By Fact \ref{fact2.5}, we have $\delta(R)\geq \left(\frac{3}{2}+\mu\right)k$. Hence for $V_{1, 1}\in \mathcal{V}_{1}$, by (\ref{eq2}) it holds that

\begin{center}
  $\frac{k}{2}\geq x=|\mathcal{V}_{1}|\geq |D_{1, 1}|\geq \delta(R)-k\geq \left(\frac{1}{2}+\mu\right)k,$
  \end{center}
a contradiction.

Assume $f=7, 6$. By Fact \ref{fact2.5}, we have $\delta(R)\geq 2\left(1-\frac{2}{f}+\frac{\mu}{2}\right)k$. Let $V_{1, i}V_{1, j}$ be a double-edge in $R[\mathcal{V}_{1}]$ for distinct $i, j\in [s_{1}]$, $\mathcal{V}':=(N_{R}(V_{1, i})\cap N_{R}(V_{1, j}))\backslash\mathcal{V}_{1}$ and $y:=|\mathcal{V}'|$. Since there is no crossing double-edge, it holds that
\begin{align}\label{eq3}
        y & \geq 2\left[2\left(1-\frac{2}{f}+\frac{\mu}{2}\right)k-2x\right]-(k-x)\nonumber\\
         & =\left(3-\frac{8}{f}+2\mu\right)k-3x\nonumber\\
         & \geq \left(2-\frac{8}{f}+2\mu\right)k-x.
\end{align}
      For any $V_{w, \ell}\in \mathcal{V}'$, it holds that \begin{align*}
      |N_{R[\mathcal{V}']}(V_{w, \ell})|& \geq  \nonumber \frac{1}{2}\left[\delta(R)-2|V(R)\backslash(\mathcal{V}_{1}\cup\mathcal{V}')|-|\mathcal{V}_{1}|\right]\\ \nonumber
                                  & =\frac{1}{2}\left[2\left(1-\frac{2}{f}+\frac{\mu}{2}\right)k-2(k-x-y)-x\right] \\ \nonumber
                                  & =\frac{1}{2}\left[\left(-\frac{4}{f}+\mu\right)k+x+2y\right]. \nonumber
                               \end{align*}
If there is a copy of $K_{f-3}$ in $R[\mathcal{V}']$, then this copy combined with $V_{1, i}V_{1, j}$ forms a crossing copy of $K_{f+1}$-embeddable structure. So by Tur\'{a}n's theorem, we must have
\begin{align*}
& \frac{1}{2}\left[\left(-\frac{4}{f}+\mu\right)k+x+2y\right] <\frac{f-5}{f-4}y; \\
& \Longleftrightarrow \frac{1}{f-4}y <\frac{1}{2}\left(\frac{4}{f}-\mu\right)k-\frac{1}{2}x;\\
& \Longleftrightarrow y<\left(2-\frac{8}{f}-\frac{f-4}{2}\mu\right)k-\frac{f-4}{2}x.
\end{align*}
However, this contradicts \eqref{eq3} and $f\geq 6$.

%
%

\begin{figure}[htbp]\label{Figure1}
\centering
\includegraphics[scale=0.8]{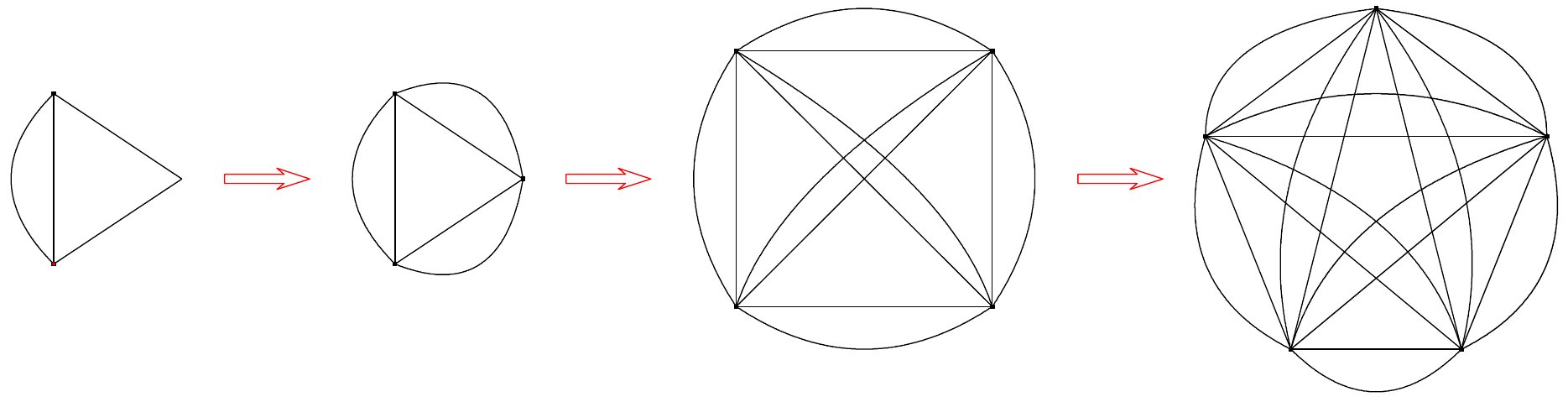}
\caption{Illustration of a graph ordering}
\label{L1}
\end{figure}
%
%

Assume $f=5$. For any cluster $V_{1, i}\in \mathcal{V}_{1}$ for $i\in [s_{1}]$, it holds that $|N_{R[\mathcal{V}_{1}]}(V_{1, i})|\geq \frac{\frac{6}{5}k-(k-x)}{2}=\frac{x}{2}+\frac{k}{10}$. Thus every double-edge $V_{1, i}V_{1, j}$ is contained in a copy of $K^{=}_{3}$ in $R[\mathcal{V}_{1}]$. Based on this, we shall first show that there is no double-edged $K_{5}$ in $R[\mathcal{V}_{1}]$. Suppose for a contradiction that there exists a double-edged $K_{5}$ in $R[\mathcal{V}_{1}]$, whose vertex set is denoted by $\{V_{1,1}, V_{1,2}, \dots, V_{1,5}\}$. Then we claim that there exists a crossing $K_{7}$-embeddable structure which consists one cluster outside $\mathcal{V}_{1}$ and three clusters in the double-edged $K_{5}$. Indeed, we calculate the edges between $\{V_{1,1}, V_{1,2}, \dots, V_{1,5}\}$ and $V(R)\backslash \mathcal{V}_{1}$. We aim to show $e(\{V_{1,1}, V_{1,2}, \dots, V_{1,5}\}, V(R)\backslash \mathcal{V}_{1})>2(k-x)$, which would imply the existence of one cluster in $V(R)\backslash \mathcal{V}_{1}$ with at least three neighbors in the double-edged $K_{5}$. Note that this gives a $K_{7}$-embeddable structure. It suffices to show
\begin{align*}
& \sum_{i=1}^{5}d(V_{1, i}) >\sum_{i=1}^{5}d_{R[\mathcal{V}_{1}]}(V_{1, i})+2(k-x);\\
&\Longleftarrow 5\times\left(\frac{6}{5}+\mu\right)k>5\times 2x+2(k-x);\\
&\Longleftrightarrow x<\frac{4+5\mu}{8}k.
\end{align*}
As $x=|\mathcal{V}_{1}|\leq \frac{k}{2}$, we are done. In the following, we may further assume that there is no double-edged $K_{5}$ in $R[\mathcal{V}_{1}]$.

Now we shall define an ordering of the graphs in $\{K^{=}_{3}, \mbox{double-edged}\:K_{3}, \mbox{double-edged}\:K_{4}, \\\mbox{double-edged}\:K_{5}\}$ as illustrated in Figure 1. Let $S$ be a maximal element in the chain which is a subgraph of $R[\mathcal{V}_{1}]$. Without loss of generality, let $V(S)=\{V_{1, 1}, \dots, V_{1, a}\}$. Then $a\in \{3, 4\}$. Observe that $\bigcap_{j\in [a]}D_{1,j}=\emptyset$. Then we have $e(V(S), V(R)\backslash \mathcal{V}_{1})\geq \sum_{i=1}^{a}d(V_{1, i})-(2a-1)(x-a)-2a^{2}=\sum_{i=1}^{a}d(V_{1, i})-(2a-1)x-a$. Since $x\leq \frac{k}{2}$, no matter $a=3$ or $4$, we always have

\begin{center}
$x <\frac{(6a-10)k-5a}{10a-15}$\text{, that is, }$\frac{6}{5}ak-(2a-1)x-a>2(k-x)$,

\end{center}
which implies that $e(V(S), V(R)\backslash \mathcal{V}_{1})\geq \sum_{i=1}^{a}d(V_{1, i})-(2a-1)x-a >2(k-x).$ This means that there is a cluster in $V(R)\setminus \mathcal{V}_{1}$ which has at least three neighbors in $V(S)$, giving a crossing $K_{6}$-embeddable structure consisting of one cluster in $V(R)\backslash \mathcal{V}_{1}$ and three clusters in $V(S)$. So we are done.

Finally we have $f=4, 3, 2$. We assume that there is no crossing $K_{3}^{=}$ as otherwise we are done. By Fact \ref{fact2.5}, we have $\delta(R)\geq (1+\mu)k$ and without loss of generality, we may assume $V_{1,1}V_{2,1}$ is a crossing single-edge. Then $N(V_{1, 1})\cap D_{2, 1}=\emptyset$ and $N(V_{2, 1})\cap D_{1, 1}=\emptyset$. We have $d(V_{1, 1})\leq k-|D_{1, 1}|-|D_{2, 1}|+2|D_{1, 1}|$ and $d(V_{2, 1})\leq k-|D_{1, 1}|-|D_{2, 1}|+2|D_{2, 1}|$ which yields $(2+2\mu)k\leq2\delta(R)\leq d(V_{1, 1})+d(V_{2, 1})\leq2k$, a contradiction.
\end{proof}

Now we prove Claim \ref{claim5.13}.

\begin{proof} [Proof of Claim \ref{claim5.13}]
Recall that $V(R)=\{V_{1, 1}, \dots, V_{1, s_{1}}, \dots, V_{p, 1}, \dots, V_{p, s_{p}}\}$ with $|V(R)|=k$ and $\mathcal{V}_{i}=\{V_{i, 1}, \dots, V_{i, s_{i}}\}$, for each $i\in[p]$.
Without loss of generality, we assume that $\mathcal{K}$ is a crossing $K_{f+1}$-embeddable structure in $R$ such that $V(\mathcal{K})=\{U_1, U_2, \dots, U_{a+b}\}$ for distinct clusters $U_1, U_2, \dots, U_{a+b}\in V(R)$ such that $U_i\in\mathcal{V}_i$ for $i\in[2]$, where $a+2b=f+1$.

If there exists a cluster in $V(\mathcal{K})$, say $U_{q}\in \mathcal{V}_{\ell}$ for some $q\in [a+b]$ and $\ell\in[p]$, such that $i_{\mathcal{K}}(U_q)=1$, then $\mathcal{K}-\{U_{q}\}=:\mathcal{K}'$ is a $K_{f}$-embeddable structure. Now we shall find two $h$-vectors $\textbf{s}, \textbf{t}\in I^{\beta'}(\mathcal{P})$ as required. Let $U'_{i}$ be a subset of $U_{i}$ for every $i\in[a+b]$, by deleting any $\beta'n$ vertices. Since $(U_{q}, U_{i})$ is an $(\varepsilon,\beta)$-regular pair for each $i\in[a+b]\setminus \{q\}$ and $\frac{1}{n}\ll\frac{1}{N}\ll \beta'\ll\frac{1}{k}\ll\beta$, we pick a vertex $v\in U'_{q}$ such that $|N(v)\cap U'_{i}|\geq (\beta-\varepsilon)m-\beta'n\ge\frac{\beta}{2} m\ge N$. Write $Y_i=N(v)\cap U'_{i}$ for each $i\in[a+b]\setminus \{q\}$. By Lemma \ref{lem2.2}, $(Y_{i}, Y_{j})$ is $\varepsilon'$-regular with $\varepsilon':=\max\left\{2\varepsilon, \frac{2\varepsilon}{\beta}\right\}=\frac{2\varepsilon}{\beta}$ for distinct $i, j\in[a+b]\setminus \{q\}$ and $d(Y_{i}, Y_{j})\geq d(U_{i}, U_{j})-\varepsilon$. Applying Corollary \ref{coro2.17} on $G[\cup_{i\neq q}Y_{i}]$, there exists a copy of $Q(a-1,b)$. Recall that $U_i\in\mathcal{V}_i$ for $i\in[2]$. Thus there exists $i_0\in[2]$ such that $\ell\neq i_0$. Let $H_{1}$ be a copy of $H$ in $Q(a-1,b)$ such that $V(H_{1})\cap U_{i_{0}}\neq\emptyset$. Then we replace any vertex in $V(H_{1})\cap U_{i_0}$ with $v$ and get another copy of $H$, say $H_2$. The two copies $H_1,H_2$ respectively give two $h$-vectors $\textbf{s}, \textbf{t}\in I^{\beta'}(\mathcal{P})$ such that $\textbf{s} - \textbf{t}=\textbf{u}_{\ell} - \textbf{u}_{i_0}$.


Now it remains to deal with the case that every $U_{i}\in V(\mathcal{K})$ satisfies $i_{\mathcal{K}}(U_i)=2$, $i\in [a+b]$. Then $a=0, f=2b-1$. Here we obtain that $H\in\widetilde{\mathcal{H}}$ and $b=ar(H)$.
Let $\mathcal{T}=\{T_{1}, \dots, T_{b}\}$ be an acyclic partition of $H$ such that $H[T_1]$ is an $s$-independent set for some $s\in\mathbb{N}$ and $|T_i|=2s,i\in[2,b]$.
Let $F_{1}=K_{1, 2s-1}$ and $F_{i}=2H[T_{i}]$ for each $i\in [2, b]$.
For each $U_{i}$, let $U'_{i}$ be obtained from $U_{i}$ by deleting any $\beta'n$ vertices. Since $\frac{1}{n}\ll\frac{1}{N}\ll \beta'\ll\frac{1}{k}$, $|U'_{i}|\geq m-\beta'n\geq \frac{m}{2}\ge N$ for each $i\in [b]$. By Lemma \ref{lem2.2}, $d(U'_{i}, U'_{j})\geq d(U_{i}, U_{j})-\varepsilon\ge\frac{1}{2}+\frac{\beta}{2}$ and $(U'_{i}, U'_{j})$ is $\varepsilon'$-regular with $\varepsilon':=\max\{2\varepsilon, \frac{\varepsilon|U_{i}|}{|U'_{i}|}\}=2\varepsilon$ for distinct $i,j\in [b]$.  Applying Lemma \ref{lem2.17} on $G[U'_{1}\cup \cdots\cup U'_{b}]$, we obtain a copy of $Q(0, b, 2s, F_{1}, \dots, F_b)$, say $Q$, such that $V(Q)=X_1\cup X_2\cup\cdots\cup X_b$ and $X_i\subseteq U_i', i\in[b]$. Note that $Q[X_1]$ induces a copy of $K_{1,2s-1}$ whose center is denoted by $v^*$.
It is easy to derive that $Q$ contains a copy of $H$, say $H_{1}$, such that $V(H_{1})\cap X_{1}$ are leaves of the $K_{1,2s-1}$, $|V(H_{1})\cap U_{i}|=2|V(H_{1})\cap U_{1}|=2s$ for every $i\in [2, b]$. By removing from $H_{1}$ any vertex in $V(H_{1})\cap U_{2}$ and adding the center $v^*$, we obtain another copy of $H$, say $H_{2}$ such that $|V(H_2)\cap U_{1}|=s+1,|V(H_2)\cap U_{2}|=2s-1$ and $|V(H_2)\cap U_{i}|=2s, i\in[3,b]$. So $H_{1}$ and $H_{2}$ provide two $h$-vectors $\textbf{s}, \textbf{t}\in I^{\beta'}(\mathcal{P})$ such that $\textbf{s} - \textbf{t}=\textbf{u}_{2} - \textbf{u}_{1}$.
\end{proof}
\subsection{Proof of Lemma \ref{lem5.7}}
This subsection is devoted to the proof of Lemma \ref{lem5.7} which states that the given minimum degree and independence number suffice to guarantee that every vertex is in a copy of $H$ while excluding any vertex set $W$ of size $o(n)$. To achieve this goal, we need the following result which investigates the structure around every vertex in the original graph $G$.



\begin{lemma}[\cite{MR4193066}, Lemma 3.11]\label{lem5.12}
Given $r\in \mathbb{N}$ with $r\geq 4$ and constants $\varepsilon, \beta, \mu$ with $0<\varepsilon, \beta\leq \frac{\mu}{10}$, the following holds for sufficiently large $n$. Let $G$ be an $n$-vertex graph with $\delta(G)\geq \left(1-\frac{2}{r}+\mu\right)n$, $\mathcal{P}=\{V_{0}, \dots, V_{k}\}$ be an $\varepsilon$-regular partition for some integer $k$ and $R$ be a reduced multigraph with multiplicity $2$. Fix a vertex $v$ in $G$, and let $Q_{v}$ be the set of clusters $V_{i}\in V(R)$ such that $|N_{V_{i}}(v)|\geq \beta|V_{i}|$. Then there exists a multi-embedding of $K_{r}$ into $R$ embedding at most one vertex in $V(R)\backslash Q_{v}$.
\end{lemma}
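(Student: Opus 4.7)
The goal is to produce a $K_r$-multi-embedding $\phi \colon V(K_r) \to V(R)$ whose image uses at most one cluster from $V(R) \setminus Q_v$. My plan has two main ingredients: derive a multigraph minimum-degree condition on $R[Q_v]$, and then iteratively construct the embedding, allowing one ``escape'' to a cluster in $\bar Q_v := V(R) \setminus (Q_v \cup \{V_0\})$ if necessary.

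First I would bound $|\bar Q_v|$ from above by counting $v$'s neighbours across clusters: each $V_i \notin Q_v$ with $i \neq 0$ contributes fewer than $\beta m$ neighbours of $v$, while $|V_0| \leq \varepsilon n$. Combined with $\delta(G) \geq (1-2/r+\mu)n$, this yields
\[
|Q_v| \geq \left(1 - \tfrac{2}{r} + \mu'\right) k, \qquad |\bar Q_v| \leq \left(\tfrac{2}{r} - \mu'\right) k,
\]
for some $\mu' > 0$ (using $\varepsilon, \beta \leq \mu/10$). The computation underlying Fact~\ref{fact2.5} then gives the multigraph minimum degree $\delta(R) \geq 2(1 - 2/r + \mu/2)k$, so restricting to $Q_v$ costs each cluster at most $2|\bar Q_v|$ in multigraph degree:
\[
d_{R[Q_v]}(V) \geq 2\bigl(1 - \tfrac{4}{r} + \mu''\bigr)\, k \quad \text{for every } V \in Q_v,
\]
with $\mu'' > 0$. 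I would also record that every cluster of $R$ has at least $(1 - 4/r + \Omega(\mu))k$ double-edge neighbours, since each non-double neighbour contributes only $1$ to the multigraph degree and the total number of distinct neighbours is at most $k-1$.

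Next I would construct a chain $\mathcal{K}_0 \subsetneq \mathcal{K}_1 \subsetneq \cdots \subsetneq \mathcal{K}_r$, where each $\mathcal{K}_s$ is a $K_s$-multi-embedding into $R$ whose image uses at most one cluster of $\bar Q_v$. Extension from $\mathcal{K}_s$ to $\mathcal{K}_{s+1}$ proceeds by either (i) adding a fresh cluster of multiplicity $1$ adjacent in $R$ to every cluster in the current image, or (ii) promoting an existing multiplicity-$1$ cluster to multiplicity $2$, which requires that its edges to the other multiplicity-$2$ clusters be double-edges. The common distinct neighbourhood inside $Q_v$ of the at most $r$ currently used clusters remains non-empty thanks to the degree count above, enabling option~(i); the double-edge surplus enables option~(ii). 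The single external cluster permitted by the statement is used only when the common $Q_v$-neighbourhood momentarily vanishes, in which case an averaging argument over $\bar Q_v$ produces a suitable anchor $w \in \bar Q_v$ with enough (double-)connections into the existing image.

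The hard part will be the tight regime, e.g.\ $r = 4$ with $\mu$ small, where $R[Q_v]$ barely satisfies the threshold and may contain no double-edges at all. Here the flexibility of $K_r$-multi-embeddings is crucial: the minimal image uses only $\lceil r/2 \rceil$ distinct clusters (when all are multiplicity-$2$, joined by a double-edged clique), so one need not find a full $K_r$ clique in $R$. For $r = 4$ a triangle in $R$ with at most one vertex in $\bar Q_v$ already yields a $(2,1)$-type $K_4$-embeddable structure, and such a triangle is easily exhibited from $\delta(R[Q_v])$ in the distinct-neighbour sense. For larger $r$, the required double-edged clique must be grown iteratively using the one permitted external anchor — and this is the combinatorial core of the argument, essentially following Knierim--Su's proof of their Lemma~3.11.
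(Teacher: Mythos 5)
The paper never proves this statement: it is imported verbatim as Lemma 3.11 of Knierim--Su \cite{MR4193066} and used as a black box, so there is no in-paper proof to compare against; your proposal has to stand on its own, and it does not. The preliminary estimates are fine: $|Q_v|\ge(1-\frac{2}{r}+\Omega(\mu))k$, $\delta(R)\ge 2(1-\frac{2}{r}+\frac{\mu}{2})k$, hence multigraph degree at least $2(1-\frac{4}{r}+\Omega(\mu))k$ inside $Q_v$ and double-edge degree at least $(1-\frac{4}{r}+\Omega(\mu))k$. But the iterative extension step is not supported by these bounds. A cluster may have as many as $(\frac{4}{r}-\Omega(\mu))k$ double-edge non-neighbours, so a common-double-neighbourhood argument becomes vacuous once about $r/4$ multiplicity-2 clusters have been placed, while the structure you must build can need up to $\lceil r/2\rceil$ pairwise double-edged clusters (whenever the multiplicity-1 part cannot be extended, you are forced to promote); likewise the distinct-degree bound $(1-\frac{2}{r}+\Omega(\mu))k$ only survives intersecting over fewer than roughly $r/2$ clusters, and restricting to $Q_v$ weakens both bounds further. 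So the assertion that ``the common distinct neighbourhood inside $Q_v$ of the at most $r$ currently used clusters remains non-empty thanks to the degree count above'' is false in general, no rule is given for choosing between options (i) and (ii), and the averaging argument that is supposed to produce the one external anchor when an extension fails is never carried out.

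Your own text concedes this: for $r\ge 5$ the ``combinatorial core'' is said to follow ``essentially Knierim--Su's proof of their Lemma 3.11'' --- but that lemma \emph{is} the statement being proved, so at that point the proposal is a citation rather than a proof (which, to be fair, is exactly how the present paper treats it). The genuinely worked-out part, the $r=4$ case via a triangle with two clusters in $Q_v$ and the multiplicity-2 cluster placed inside $Q_v$, is correct, but the general case requires the kind of extremal analysis (maximal double-edged substructures, Tur\'an-type counting, case distinctions) that Knierim--Su carry out and that this paper itself illustrates in the proof of Lemma \ref{lem5.10}; a greedy common-neighbourhood induction cannot reach the threshold $1-\frac{2}{r}$.
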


\begin{proof} [Proof of Lemma \ref{lem5.7}] Given $h\in \mathbb{N}$ with $h\geq 3$, an $h$-vertex graph $H$ and constant $\mu$, we choose
\begin{center}
$\frac{1}{n}\ll \frac{1}{N}\ll \alpha\ll\frac{1}{k}\ll \varepsilon\ll \mu$.
\end{center}
Let $\beta=\frac{\mu}{10}$, $G$ be an $n$-vertex graph with $\delta(G)\geq \max\left\{\left(1-\frac{2}{f(H)}+\mu\right)n, \left(\frac{1}{2}+\mu\right)n\right\}$, $\alpha(G)\leq \alpha n$, $W\subseteq V(G)$ with $|W|\leq \frac{\mu}{2}n$ and $G_{1}:= G-W$. Then we have
\begin{center}
$\delta(G_{1})\geq \max\left\{\left(1-\frac{2}{f(H)}+\mu\right)n, \left(\frac{1}{2}+\mu\right)n\right\}-|W|\geq \max\left\{\left(1-\frac{2}{f(H)}+\frac{\mu}{2}\right)n, \left(\frac{1}{2}+\frac{\mu}{2}\right)n\right\}$.
\end{center}
Applying Lemma \ref{lem2.3} to $G_{1}$ with $\varepsilon, \beta>0$, we obtain an $(\varepsilon,\beta)$-regular partition $\mathcal{P}=\{V_{0}, V_{1}, \dots, V_{k}\}$ of $G_{1}$.
Let $R:=R_{\beta, \varepsilon}$ be a reduced multigraph for the partition $\mathcal{P}$ with multiplicity $2$, $|V(R)|=k$ and $m:=|V_{i}|$ for each $i\in [k]$. By Fact \ref{fact2.5}, we have $\delta(R)\geq 2\left(1-\frac{2}{f(H)}+\frac{\mu}{2}\right)k$ when $f(H)\geq 4$ and $\delta(R)\geq 2\left(\frac{1}{2}+\frac{\mu}{2}\right)k$ when $f(H)=2, 3$. Applying Lemma \ref{lem5.12} with $r=\max\{f(H), 4\}$ where $\frac{\mu}{2}$ plays the role of $\mu$, for any vertex $v\in V(G_{1})$, we obtain a $K_{f(H)}$-embeddable structure $\mathcal{K}$ such that $|V(\mathcal{K})\backslash Q_{v}|\le1$ where $Q_{v}$ is as defined in Lemma \ref{lem5.12}. Assume $V(\mathcal{K})=\{V_{1}, \dots, V_{\ell}\}$.

If $V(\mathcal{K})\backslash Q_{v}=\emptyset$, then $\mathcal{K}$ is a subgraph of $R[Q_{v}]$. Let $S_{i}:= N(v)\cap V_{i}$ for each $i\in [\ell]$. Then by the definition of $Q_{v}$, we have $|S_{i}|\geq \beta m\geq N$ for each $i\in [\ell]$ since $\frac{1}{n}\ll\frac{1}{N}\ll\frac{1}{k}\ll\beta$. By Lemma \ref{lem2.2}, $(S_{i}, S_{j})$ is $\varepsilon'$-regular with $\varepsilon':=\max\left\{2\varepsilon, \frac{\varepsilon}{\beta}\right\}=\frac{\varepsilon}{\beta}$ and $d(S_{i}, S_{j})\geq d(V_{i}, V_{j})-\varepsilon$ for distinct $i, j\in [\ell]$. Corollary \ref{coro2.17} applied on $G[S_{1}\cup \cdots\cup S_{\ell}]$ with $a=2\ell-f(H)$ and $b=f(H)-\ell$ gives a copy $Q$ of $Q(a, b)$. Note that $V(Q)\subseteq N(v)$. Hence, $v$ is in a copy of $H$.

%

Now assume $V(\mathcal{K})\backslash Q_{v}=\{V_{\ell}\}$, and thus $i_{\mathcal{K}}(V_{\ell})=1$. Similarly, we can choose subsets $S_{i}\subseteq V_{i}\cap N(v)$ as above for each $i\in [\ell-1]$. Applying Corollary \ref{coro2.17} on $G[S_{1}\cup S_{2}\cup \cdots\cup S_{\ell-1}\cup V_{\ell}]$ with $a=2\ell-f(H)$, $b=f(H)-\ell$ and the fact that $i_{\mathcal{K}}(V_{\ell})=1$, we obtain a copy $Q$ of $Q(a, b)$ such that $Q[V_{\ell}]$ is an independent set. Replacing any vertex in the independent set with $v$, we conclude that $v$ is in a copy of $H$.
\end{proof}

%
%
%
%
%

%
%
%
%
%

\bibliographystyle{abbrv}
\bibliography{ref}

\begin{thebibliography}{10}

\bibitem{MR1376050}
N.~Alon and R.~Yuster.
\newblock {$H$}-factors in dense graphs.
\newblock {\em J. Combin. Theory Ser. B}, 66(2):269--282, 1996.

\bibitem{MR3570984}
J.~Balogh, T.~Molla, and M.~Sharifzadeh.
\newblock {Triangle factors of graphs without large independent sets and of
  weighted graphs}.
\newblock {\em Random Structures and Algorithms}, 49(4):669--693, 2016.

\bibitem{MR1090733}
Y.~Caro and Z.~Tuza.
\newblock {Improved lower bounds on {$k$}-independence}.
\newblock {\em Journal of Graph Theory}, 15(01):99--107, 1991.

\bibitem{CHKWY}
F.~Chang, J.~Han, J.~Kim, G.~Wang, and D.~Yang.
\newblock {Embedding clique-factors in graphs with low $l$-independence
  number}.
\newblock {\em arXiv preprint arXiv:2111.10512}, 2021.

\bibitem{cooley2007perfect}
O.~Cooley, D.~K{\"u}hn, and D.~Osthus.
\newblock Perfect packings with complete graphs minus an edge.
\newblock {\em European Journal of Combinatorics}, 28(8):2143--2155, 2007.

\bibitem{MR200185}
K.~Corr{\'a}di and A.~Hajnal.
\newblock {On the maximal number of independent circuits in graph}.
\newblock {\em Acta Mathematica Academiae Scientiarum Hungarica},
  14(3-4):423--439, 1963.

\bibitem{Dirac1952}
G.~A. Dirac.
\newblock {Some theorems on abstract graphs}.
\newblock {\em Proceedings of the London Mathematical Society}, (1):69--81,
  1952.

\bibitem{ER1959}
P.~Erd\H{o}s.
\newblock {Graph theory and probability}.
\newblock {\em Canadian Journal of Mathematics}, 11(3):34--38, 1996.

\bibitem{MR1476449}
P.~Erd\H{o}s, A.~Hajnal, M.~Simonovits, V.~T. S\'{o}s, and E.~Szemer\'{e}di.
\newblock {Tur\'{a}n-{R}amsey theorems and {$K_p$}-independence numbers}.
\newblock {\em Combinatorics, geometry and probability}, pages 253--281, 1997.

\bibitem{MR716422}
P.~Erd\H{o}s, A.~Hajnal, V.~T. S\'{o}s, and E.~Szemer\'{e}di.
\newblock More results on {R}amsey-{T}ur\'{a}n type problems.
\newblock {\em Combinatorica}, 3(1):69--81, 1983.

\bibitem{MR0299512}
P.~Erd{\H{o}}s and V.~T. S{\'o}s.
\newblock {Some remarks on Ramsey's and Tur{\'a}n's theorem}.
\newblock {\em In Combinatorial theory and its applications, II (Proc. Colloq.,
  Balatonf{\"u}red, 1969)}, pages 395--404, 1970.

\bibitem{HajnalSze1970}
A.~Hajnal and E.~Szemer{\'e}di.
\newblock {Proof of a conjecture of P. Erd\H{o}s}.
\newblock {\em Combinatorial theory and its applications}, 2(4):601--623, 1970.

\bibitem{MR2496914}
H.~H\`{a}n, Y.~Person, and M.~Schacht.
\newblock {On perfect matchings in uniform hypergraphs with large minimum
  vertex degree}.
\newblock {\em SIAM Journal on Discrete Mathematics}, 23(2):732--748, 2009.

\bibitem{MR3529107}
J.~Han.
\newblock {Near perfect matchings in {$k$}-uniform hypergraphs {II}}.
\newblock {\em SIAM Journal on Discrete Mathematics}, 30:1453--1469, 2016.

\bibitem{MR3632565}
J.~Han.
\newblock {Decision problem for perfect matchings in dense $k$-uniform
  hypergraphs}.
\newblock {\em Transactions of the American Mathematical Society},
  369(7):5197--5218, 2017.

\bibitem{HMWY2021}
J.~Han, P.~Morris, G.~Wang, and D.~Yang.
\newblock {A Ramsey-Tur\'{a}n theory for tilings in graphs}.
\newblock {\em arXiv preprint arXiv:2106.09688}, 2021.

\bibitem{1983Hell}
P.~Hell and D.~G. Kirkpatrick.
\newblock {On the complexity of general graph factor problems}.
\newblock {\em SIAM Journal on Computing}, 12(3):601--609, 1983.

\bibitem{Kawa}
K.-i. Kawarabayashi.
\newblock {$K^-_4$}-factor in a graph.
\newblock {\em Journal of Graph Theory}, 39(2):111--128, 2002.

\bibitem{MR3290271}
P.~Keevash and R.~Mycroft.
\newblock {A geometric theory for hypergraph matching}.
\newblock {\em Memoirs of the American Mathematical Society}, 223(1098):vi+95,
  2015.

\bibitem{MR3354296}
P.~Keevash and R.~Mycroft.
\newblock {A multipartite Hajnal--Szemer{\'e}di theorem}.
\newblock {\em Journal of Combinatorial Theory, Series B}, 114:187--236, 2015.

\bibitem{MR4193066}
C.~Knierim and P.~Su.
\newblock {$K_{r}$-factors in graphs with low independence number}.
\newblock {\em Journal of Combinatorial Theory, Series B}, 148:60--83, 2020.

\bibitem{MR1767021}
J.~Koml\'{o}s.
\newblock Tiling {T}ur\'{a}n theorems.
\newblock {\em Combinatorica}, 20(2):203--218, 2000.

\bibitem{MR1829855}
J.~Koml\'{o}s, G.~N. S\'{a}rk\"{o}zy, and E.~Szemer\'{e}di.
\newblock Proof of the {A}lon-{Y}uster conjecture.
\newblock {\em Discrete Mathematics}, 235(1-3):255--269, 2001.

\bibitem{MR1395865}
J.~Koml\'{o}s and M.~Simonovits.
\newblock {\em Szemer\'{e}di's regularity lemma and its applications in graph
  theory}, volume~2 of {\em Bolyai Soc. Math. Stud.}
\newblock J\'{a}nos Bolyai Math. Soc., Budapest, 1996.

\bibitem{MR2588541}
D.~K\"{u}hn and D.~Osthus.
\newblock Embedding large subgraphs into dense graphs. in surveys in
  combinatorics 2009.
\newblock {\em London Math. Soc. Lecture Note Ser.}, 365:137--167, 2009.

\bibitem{DKDO2009}
D.~K\"uhn and D.~Osthus.
\newblock {The minimum degree threshold for perfect graph packings}.
\newblock {\em Combinatorica}, 29(1):65--107, 2009.

\bibitem{MR3338027}
A.~Lo and K.~Markstr{\"o}m.
\newblock {$F$-factors in hypergraphs via absorption}.
\newblock {\em Graphs and Combinatorics}, 31(3):679--712, 2015.

\bibitem{MR1910115}
C.~Magyar and R.~Martin.
\newblock {Tripartite version of the Corr{\'a}di--Hajnal theorem}.
\newblock {\em Discrete Mathematics}, 254(1-3):289--308, 2002.

\bibitem{MR2433861}
R.~Martin and E.~Szemer{\'e}di.
\newblock {Quadripartite version of the Hajnal--Szemer{\'e}di theorem}.
\newblock {\em Discrete Mathematics}, 308(19):4337--4360, 2008.

\bibitem{MR4080942}
R.~Nenadov and Y.~Pehova.
\newblock {On a Ramsey--Tur{\'a}n variant of the Hajnal--Szemer{\'e}di
  theorem}.
\newblock {\em SIAM Journal on Discrete Mathematics}, 34(2):1001--1010, 2020.

\bibitem{MR2500161}
V.~R{\"o}dl, A.~Ruci{\'n}ski, and E.~Szemer{\'e}di.
\newblock Perfect matchings in large uniform hypergraphs with large minimum
  collective degree.
\newblock {\em Journal of Combinatorial Theory, Series A}, 116(3):613--636,
  2009.

\bibitem{MR1995690}
A.~Shokoufandeh and Y.~Zhao.
\newblock Proof of a tiling conjecture of {K}oml\'{o}s.
\newblock {\em Random Structures Algorithms}, 23(2):180--205, 2003.

\bibitem{MR634555}
D.~P. Sumner.
\newblock {Subtrees of a graph and the chromatic number}.
\newblock {\em The Theory and Applications of Graphs}, pages 557--576, 1981.

\bibitem{MR3406450}
A.~Treglown.
\newblock {On directed versions of the Hajnal--Szemer{\'e}di theorem}.
\newblock {\em Combinatorics, Probability and Computing}, 24:873--928, 2015.

\end{thebibliography}

\end{document}